\documentclass[11pt,a4paper,leqno]{amsart}

\usepackage[utf8]{inputenc}
\usepackage{amsmath,amssymb,amsfonts,amsthm,yfonts}
\usepackage{latexsym,epsfig,graphicx,subfigure}
\usepackage{mathtools}
\usepackage{esint}
\usepackage{bbm}
\usepackage{calrsfs}
\usepackage{esint}
\usepackage{mathrsfs}

\DeclareFontFamily{U}{mathx}{}
\DeclareFontShape{U}{mathx}{m}{n}{<-> mathx10}{}
\DeclareSymbolFont{mathx}{U}{mathx}{m}{n}
\DeclareMathAccent{\widehat}{0}{mathx}{"70}
\DeclareMathAccent{\widecheck}{0}{mathx}{"71}

\usepackage{graphicx,epsfig,subfigure}

\usepackage{enumitem}
\usepackage{latexsym}
\usepackage{hyperref}
\usepackage{etoolbox}

\makeatletter
\def\thm@space@setup{%
	\thm@preskip=8pt
	\thm@postskip=8pt
}
\makeatother

\AtEndEnvironment{proof}{\vspace{6pt}}

\hypersetup{
	colorlinks=true,
	linkcolor=magenta,
	filecolor=black,
	urlcolor=blue,
	citecolor=cyan,
}

\numberwithin{equation}{section}

\theoremstyle{plain}
\newtheorem*{thm*}{Theorem}
\newtheorem{thm}{Theorem}[section]
\newtheorem{lem}[thm]{Lemma}

\newtheorem*{prop*}{Proposition}
\newtheorem{cor}[thm]{Corollary}

\theoremstyle{definition}
\newtheorem{defn}{Definition}[section]
\newtheorem*{defn*}{Definition}

\theoremstyle{remark}
\newtheorem{rem}{\textbf{Remark}}[section]

\newcommand*{\R}{\mathbb{R}}
\newcommand*{\ve}{\varepsilon}

\DeclareMathAlphabet{\pazocal}{OMS}{zplm}{m}{n}

\makeatletter
\newcommand*{\rom}[1]{\expandafter\@slowromancap\romannumeral #1@}
\makeatother

\begin{document}
	
	\title{$L^2$-boundedness of the $n$-th Calderón commutator on Lipschitz graphs}
	\author{Joan Hernández, Joan Mateu and Laura Prat}\thanks{Joan Hern\'andez and Laura Prat have been supported by PID2024-165107NB-I00 and Joan Mateu by PID2024-155320NB-I00 (Ministerio de Ciencia e Innovación, Spain).}
	\maketitle
	\begin{abstract}
		This paper investigates the asymptotic behavior of the norm, as a bounded operator in $L^2(\R)$, of the $n$-th Calderón commutator $T_{A,n}$ on the graph of a Lipschitz function $A:\R\to\R$. We prove the estimate $\|T_{A,n}\|_{L^2\to L^2} \leq Cn\|A'\|_\infty^n$, thus formalizing a claim by Mateu and Verdera via a symmetrization strategy and the $T1$ theorem. We also show that additional regularity on $A$ yields sublinear growth in $n$. Specifically, for $A$ supported in $[0,1]$, the bound improves to a behavior of the form $\sqrt{n}\|A'\|_\infty^n$ under a Dini condition on $A'$, or if $A'$ belongs to the logarithmic Besov space $B^{1,0}_{1,1}(\R)$. This space contains all compactly supported functions in the Sobolev spaces $H^s(\R)$ for $0<s<1,$ as well as functions of bounded variation. These refined estimates are established through an alternative framework based on Hörmander-type conditions and interpolation, bypassing the standard $T1$ approach. Counterexamples are provided to demonstrate that the Dini and Sobolev fractional regularity conditions are incomparable.
		\bigskip
		
		\noindent\textbf{AMS 2020 Mathematics Subject Classification:}  42B20.
		
		\medskip
		
		\noindent \textbf{Keywords:} Singular integrals, $L^2$-boundedness, Calderón commutator.
	\end{abstract}
	\section{Introduction}
	
	The problem of the $L^2$-boundedness of Calderón-type commutators associated with Lipschitz functions is closely related to the boundedness of the Cauchy singular integral on Lipschitz graphs, a central problem in harmonic analysis. Given a Lipschitz graph
	$
	\Gamma=\{x+iA(x):x\in\mathbb R\},
	$
	the Cauchy singular integral is defined by
	$$
	C(f)(z)=\frac1{2\pi i}\,\mathrm{p.v.}\int_{\Gamma}\frac{f(w)}{z-w}\,dw,
	$$
	where $f$ is a function for which the principal value integral exists almost everywhere with respect to the arc-length measure; for example, $f$ can be a compactly supported smooth function on the graph. Its boundedness on \(L^2(\Gamma)\) was established by Coifman, McIntosh and Meyer in 1982 \cite{CMM}, completing earlier work of Calderón \cite{C1,C2}.
	
	Calderón’s starting point in \cite{C1} was the analysis of the first commutator
	$$
	T_{A,1} f(x)=\mathrm{p.v.}\int_{\mathbb{R}}\frac{A(y)-A(x)}{(y-x)^2}f(y)\,dy,
	$$
	where $A$ is a Lipschitz function. Formally, this operator can be expressed as
	\begin{equation}\label{com}
		\frac{-1}{\pi}T_{A,1}
		=
		AH\frac{d}{dx}-H\frac{d}{dx}A
		=
		\bigg[A,H\frac{d}{dx}\bigg],
	\end{equation}
	where $H$ is the Hilbert transform. Calderón proved in \cite{C1} that \(T_{A,1}\) is bounded on \(L^2(\mathbb R)\).
	
	More generally, parametrizing the graph $\Gamma=\{x+iA(x)\}$ leads to the higher-order commutators
	$$
	T_{A,n}f(x)=\mathrm{p.v.}\int_{\mathbb R}
	\left(\frac{A(y)-A(x)}{y-x}\right)^n
	\frac{f(y)}{y-x}\,dy,
	$$
	which arise from the formal expansion of the Cauchy kernel
	\begin{equation}\label{eq}
		\frac{1}{x-y+i(A(x)-A(y))}
		=
		\sum_{n\ge0}(-i)^n
		\frac{(A(x)-A(y))^n}{(x-y)^{n+1}},
	\end{equation}
	valid when \(\|A'\|_\infty<1\). Indeed, one can show that for these higher-order commutators, an analogous formula to \eqref{com} holds, namely
	$$\frac{(-1)^nn!}{\pi}T_{A,n}=\bigg[A,\bigg[A,\dots,\bigg[A,H\frac{d^n}{dx^n}\bigg]\dots\bigg]\bigg].$$
	One way to show this equivalence between operators is to observe that, for $A$ smooth and Lipschitz and $f\in \mathcal{C}^\infty_c(\R)$, the following holds:
	\begin{align*}
		\bigg[A,\bigg[A,\dots,\bigg[A,H\frac{d^n}{dx^n}\bigg]\dots\bigg]\bigg]f(x) &= \mathrm{p.v.} \int \frac{1}{x-y}\sum_{k=0}^{n}(-1)^k \binom{n}{k}A(x)^{n-k}\frac{d^n}{dy^n}(A^kf)(y)\,dy\\
		&=\mathrm{p.v.} \int \frac{1}{x-y}\frac{d^n}{dy^n}\Big[ \big( A(x)-A(y) \big)^nf(y) \Big]\,dy.
	\end{align*}
	Starting from this identity, the desired estimate can be obtained by an induction argument on $n$, based on integration by parts and Taylor's theorem. The result for a general Lipschitz function $A$ is then achieved by applying the smooth result to a mollified approximation $A_\delta$ of $A$, and using the fact that the $L^2$-operator norms of the corresponding truncations of $T_{A_\delta,n}$ are uniformly bounded with respect to the truncation parameter.
	
	From \eqref{eq}, one sees that obtaining quantitative bounds for \(T_{A,n}\) plays a central role in the analysis of the Cauchy integral.
	Indeed, estimates of the form
	$$
	\|T_{A,n}\|_{L^2\to L^2}\le C^n\|A'\|_\infty^n
	$$
	are insufficient to sum the above series unless \(\|A'\|_\infty\) is small, which explains Calderón’s small-slope result in \cite{C2}. To remove this restriction, one needs polynomial control in \(n\):
	$$
	\|T_{A,n}\|_{L^2\to L^2}
	\le C_0(1+n)^k\|A'\|_\infty^n, \quad n=1,2,\ldots
	$$
	for some positive integer $k$. Such polynomial control is sufficient to remove the smallness restriction and recover the full boundedness of the Cauchy singular integral. This was achieved in the celebrated work of Coifman, McIntosh and Meyer \cite{CMM}, who obtained the preceding inequality for \(k=4\). Later, Christ and Journé \cite{christjourne} proved the estimate $$\|T_{A,n}\|_{L^2\to L^2}\le C_\delta(1+n)^{1+\delta}\|A'\|_\infty^n,$$ for every $ \delta>0$. Subsequently, Verdera \cite[Section 5]{V}, building on symmetrization methods related to Menger curvature, indicated that joint work with Mateu yields the sharper estimate
	\begin{equation}\label{u}
		\|T_{A,n}\|_{L^2\to L^2}
		\le C_0(1+n)\|A'\|_\infty^n.
	\end{equation}
	However, a complete proof of \eqref{u} has not appeared in the literature. 
	
	One of the main goals of this paper is to provide a complete proof of \eqref{u}, thereby making rigorous the claim in \cite{V}. We also investigate whether the linear growth in \(n\) is sharp.
	
	For the Cauchy singular integral, it is known that the operator norm grows at most linearly with the Lipschitz constant, and David constructed an example showing that this dependence is sharp \cite{D1} (see Figure \ref{cantor}).
	\begin{figure}
		\label{cantor}
		\centering
		\includegraphics[width=0.85\textwidth]{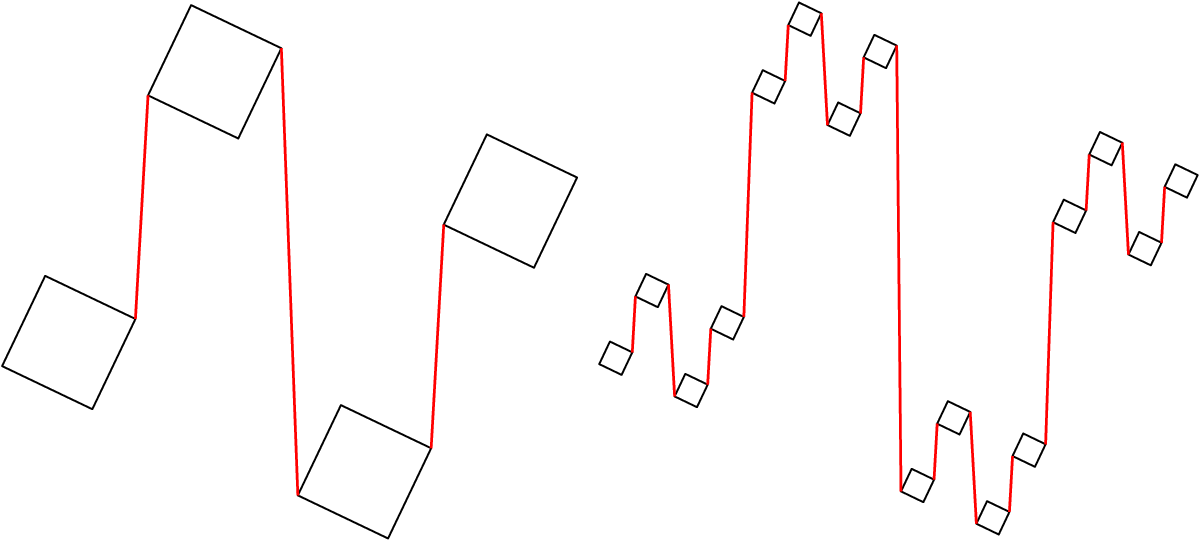}
		\caption{First two iterations involved in the construction of David's saturating Lipschitz function.}
	\end{figure}
	It is therefore natural to ask whether the linear dependence on \(n\) in \eqref{u} is likewise optimal. However, analogous lower bounds for the higher-order commutators \(T_{A,n}\) seem elusive. In all examples we have examined, the lower bounds only reflect the exponential factor \(\|A'\|_\infty^n\), with no polynomial growth in \(n\). This suggests that the bound \eqref{u} may not be optimal in general.
	
	Motivated by this, we study stronger regularity assumptions on \(A\), under which the polynomial growth can be improved. In particular, for functions slightly more regular than Lipschitz, we obtain estimates with growth of order $\sqrt n\|A'\|_\infty^n.$
	
	\section{Notation and statement of the main results}
	
	In this section we introduce the notation used throughout the paper and state the main results.
	
	Let $A:\R\to\R$ be a Lipschitz function. For $\ve>0$, let
	$$
	T_{A,n,\ve}f(x)
	:=
	\int_{|x-y|>\ve}
	\frac{f(y)}{x-y}
	\left(\frac{A(x)-A(y)}{x-y}\right)^n
	\,d y.
	$$
	
	As in the introduction, we denote by $T_{A,n}$ the corresponding principal value operator. The kernel associated to $T_{A,n}$ will be denoted by
	\[
	K_n(x,y):=\frac{1}{x-y}\bigg( \frac{A(x)-A(y)}{x-y} \bigg)^n, \quad x\neq y.
	\]
	If $A$ is Lipschitz, it follows that $K_n$ defines an odd standard kernel and thus $T_{A,n}$ becomes a Calderón-Zygmund operator. Namely, $K_n$ satisfies the pointwise estimates:
	\begin{align*}
		&|K_n(x,y)| \le \frac{\|A'\|_\infty^n}{|x-y|},\\
		&|K_n(x,y)-K_n(x,z)| \le Cn\|A'\|_\infty^n \frac{|y-z|}{|x-y|^{2}}, \qquad \text{ for }\; |x-y|>2|y-z|,
	\end{align*}
	where $C$ is an absolute constant. The constants appearing in the above estimates are known as the (pointwise) Calderón-Zygmund constants of the kernel, and will usually be denoted by $C_{\mathrm{CZ}}$. Note that their dependence on $n$ is linear.
	
	These constants play a crucial role when one seeks to estimate, or optimize, the $L^2(\mathbb{R})$ operator norm of $T_{A,n}$ through a direct application of a $T1$ theorem. However, in the present setting, the constants obtained from the pointwise Calderón-Zygmund estimates are too large to yield sharp bounds for the operator norm. For this reason, we instead rely on integral smoothness estimates of Hörmander type, which provide finer quantitative information. The refined constants arising from these Hörmander-type estimates will also be referred to, when no ambiguity is possible, as Calderón-Zygmund constants of the kernel, and also denoted by $C_{\mathrm{CZ}}$. In order to exploit these improved constants, one must therefore avoid a direct use of the $T1$ theorem and employ alternative methods.
	
	We begin with a general result for Lipschitz functions $A$. 
	
	\begin{thm}\label{main}
		Let $A:\R\to\R$ be a Lipschitz function. Then
		$$
		\|T_{A,n}\|_{L^2\to L^2}
		\le
		Cn\|A'\|_\infty^n.
		$$
		where $C>0$ is an absolute constant.
	\end{thm}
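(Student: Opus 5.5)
By homogeneity we may normalize $\|A'\|_\infty=1$, replacing $A$ by $A/\|A'\|_\infty$; the kernel scales exactly by $\|A'\|_\infty^n$. After the usual reduction to smooth $A$ via mollification, with uniform bounds on the truncations $T_{A,n,\ve}$, it suffices to show $\|T_{A,n,\ve}\|_{L^2\to L^2}\le Cn$ uniformly in $\ve$. The plan follows the symmetrization strategy suggested by Mateu and Verdera together with a $T1$ theorem whose constants are tracked explicitly. We use the quantitative form
\[
\|T\|_{L^2\to L^2}\le C\big(C_{\mathrm{CZ}}+\|T1\|_{BMO}+\|T^*1\|_{BMO}+\|T\|_{WBP}\big).
\]
The pointwise Calderón--Zygmund constants above are already $O(n)$. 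Because $K_n$ is antisymmetric, $T^*=-T$, and the weak boundedness property is automatic for odd kernels: $\langle T\chi_I,\chi_I\rangle=0$ for the truncations. Everything therefore reduces to proving $\|T_{A,n,\ve}1\|_{BMO}\le Cn$.

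To control $T1$ we symmetrize. Since $T^*=-T$, it is enough to test $T_\ve1$ against $\chi_I$ (localized) and, more naturally, to bound $\int_I|T_\ve\chi_I|^2\,dx\lesssim n^2|I|$. This localized $L^2$ estimate suffices for the BMO bound once the tails are handled by the smoothness estimate, which costs $O(n)$. Expanding $\|T_\ve\chi_I\|_{L^2(I)}^2$ gives a triple integral of $K_n(x,y)K_n(x,z)$ over $x,y,z\in I$. Symmetrizing over the six permutations of $(x,y,z)$, in the spirit of Melnikov's identity, produces the quantity
\[
p_n(x,y,z)=\sum_{\sigma}K_n(x_{\sigma1},x_{\sigma2})K_n(x_{\sigma1},x_{\sigma3}).
\]
We write $a=\frac{A(x)-A(y)}{x-y}$, $b=\frac{A(y)-A(z)}{y-z}$ and $c=\frac{A(z)-A(x)}{z-x}$, all lying in $[-1,1]$. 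Using the algebraic identity for $\frac1{(x-y)(x-z)}+\dots=0$, the symmetrized kernel is expressed through divided differences. Concretely, $p_n$ equals a sum of terms of the form $\frac{(a^n-b^n)(a^n-c^n)}{(x-y)(x-z)}$ (up to signs), which we factor as
\[
\frac{(a-b)(a-c)}{(x-y)(x-z)}\cdot\Big(\sum_{j<n}a^jb^{n-1-j}\Big)\Big(\sum_{k<n}a^kc^{n-1-k}\Big).
\]
The second factor is at most $n^2$. The first factor is the analogue of the squared Menger curvature of the graph, essentially $c(x,y,z)^2$ for the points $(t,A(t))$, up to the identity $(a-b)/(x-z)$ being a second divided difference. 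This gives $p_n\lesssim n^2\,c_\Gamma(x,y,z)^2$, where $c_\Gamma$ is Menger curvature of $\Gamma$ restricted to parameters.

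The main obstacle is that a direct bound of $n^2$ times the curvature would give $\|T\chi_I\|_{L^2(I)}^2\lesssim n^2 c^2(\Gamma\cap I)+\dots$. Combined with the known estimate $c^2(\Gamma|_I)\lesssim|I|$ for Lipschitz graphs with slope $\le1$ (via $\beta$-numbers or the Cauchy integral bound, which we may use on small-constant graphs or by the Mattila--Melnikov--Verdera identity), this yields $\|T\chi_I\|_{L^2(I)}\lesssim n|I|^{1/2}$, hence $\|T1\|_{BMO}\lesssim n$. Two points need care. First, the non-symmetric boundary terms arising from truncation, where one of the pairwise distances is less than $\ve$, must be bounded by $O(n^2)|I|$; we plan to control them by the pointwise bound $|K_n|\le 1/|x-y|$ and the $O(n)$ smoothness, as in Melnikov--Verdera. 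Second, we must check that the cross sign structure really gives products of the form $(a^n-b^n)(a^n-c^n)$ rather than something with worse growth. If the exact factorization fails, the fallback is to write $a^n-b^n=(a-b)\sum a^jb^{n-1-j}$ on each term separately and use $|a|,|b|,|c|\le1$, which still gives the $n^2$ factor. Combining the $O(n)$ estimates for the Calderón--Zygmund constants, for $T1$ and for $T^*1$ in the $T1$ theorem then yields $\|T_{A,n}\|\le Cn\|A'\|_\infty^n$.
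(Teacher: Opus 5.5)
Your outline matches the paper's: normalize $\|A'\|_\infty=1$, apply the $T1$ theorem for antisymmetric kernels with $C_{\mathrm{CZ}}\lesssim n$, reduce to $\|T_{A,n,\varepsilon}\chi_I\|_{L^2(I)}^2\lesssim n^2|I|$, discard the region where two of $x,y,z$ are $\varepsilon$-close using $|K_n(x,y)|\le 1/|x-y|$, symmetrize, and bound a curvature-type integral. The gap is in the step that carries the proof. The pointwise bound $|p_n|\lesssim n^2\big(\frac{a-b}{z-x}\big)^2$ is true, but your factorization does not establish it.

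For $x<y<z$ put $\lambda=\frac{y-x}{z-x}$. Then $c=\lambda a+(1-\lambda)b$, so $a-c=(1-\lambda)(a-b)$ and
\[
\frac{(a-b)(a-c)}{(x-y)(x-z)}=\frac{z-y}{y-x}\Big(\frac{a-b}{z-x}\Big)^2 .
\]
So your ``first factor'' is not comparable to the squared Menger curvature. The ratio $\frac{z-y}{y-x}$ is unbounded, and for smooth $A$ with $A''\not\equiv 0$ its integral over the truncated region grows like $\log(1/\varepsilon)$. That destroys uniformity in $\varepsilon$.

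Nor is $p_n$ a signed sum of such terms. Expanding them produces $a^{2n},b^{2n},c^{2n}$, which do not occur in $p_n$. Any such representation must therefore use a relation among $a,b,c$, and your ansatz never invokes one.

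The fallback fails as well. The individual terms $\frac{a^nc^n}{(x-y)(x-z)}$, etc., contain no differences of powers to factor. Bounding them separately leads to the truncated integral of $\frac{1}{|x-y||x-z|}$ over $I^3$, which is $\simeq |I|\log^2(|I|/\varepsilon)$. The cancellation among the terms has to be exhibited exactly.

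The missing ingredient is the convex-combination relation $c=\lambda a+(1-\lambda)b$ for ordered points. In the paper's notation ($\alpha=a$, $\beta=b$, $\gamma=c$), one writes $S_n=p_n/2$ as
\[
S_n=\frac{1}{(z-x)^2}\Big(\frac{\alpha^n\gamma^n}{\lambda}-\frac{\alpha^n\beta^n}{\lambda(1-\lambda)}+\frac{\beta^n\gamma^n}{1-\lambda}\Big).
\]
Substituting $\lambda=\frac{\gamma-\beta}{\alpha-\beta}$ and $1-\lambda=\frac{\alpha-\gamma}{\alpha-\beta}$ gives (Lemma~\ref{Wn})
\[
S_n=\Big(\frac{\alpha-\beta}{z-x}\Big)^2W_n(\alpha,\beta,\gamma),\qquad W_n=\sum_{\substack{j+k+l=2n-2\\ j,k,l\le n-1}}\alpha^j\beta^k\gamma^l .
\]
The curvature factor thus carries the longest gap $z-x$, and $|W_n|\le n(n+1)/2$.

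If you prefer your type of product, the correct pairing is $x$ with the slope $b$ of the opposite side. Then
\[
S_n=\frac{(b^n-a^n)(b^n-c^n)}{(x-y)(x-z)}-\frac{b^n[\lambda a^n+(1-\lambda)b^n-c^n]}{(y-x)(z-y)} .
\]
The first term equals $\big(\frac{a-b}{z-x}\big)^2$ times two sums bounded by $n$. The Jensen-gap term is $O(n^2)\big(\frac{a-b}{z-x}\big)^2$. Both facts again rest on $c=\lambda a+(1-\lambda)b$.

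With this identity, the rest of your plan goes through. For the curvature integral you need neither the Cauchy integral nor $\beta$-numbers. Apply Lemma~\ref{lem:triple} to $\widetilde A=\chi_I\,(A-\ell_I)$, where $\ell_I$ is the affine function agreeing with $A$ at the endpoints of $I$. On $I^3$, $\widetilde A$ has the same values of $\alpha-\beta$ as $A$, so this gives the bound $2\pi^2\int_I|A'-\langle A'\rangle_I|^2\le 2\pi^2|I|$ directly.
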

	
	In the results that follow, the Lipschitz function $A$ is assumed to be compactly supported. For simplicity, we assume that $A$ is supported in $[0,1]$. Otherwise, the estimates below should be modified so that an additional factor depending on the length of the support is included. In fact, as argued in \cite[\textsection 3]{V}, one may further assume that $A$ is of class $\mathcal{C}^1$, since the purely Lipschitz case follows by approximation.
	
	We recall that, given a function $f:\mathbb{R}\to\mathbb{R}$, its modulus of continuity is defined by
	$$
	\omega_f(t):=\sup_{|x-y|\le t} |f(x)-f(y)|, \qquad t>0.
	$$
	We say that $\omega_f$ satisfies a Dini condition if
	$$
	C_{\omega_f}:=\int_0^1 \frac{\omega_f(t)}{t}\,dt < \infty.
	$$
	
	\begin{thm}\label{2}
		Let $A$ be a Lipschitz function supported in $[0,1]$ and set
		$\displaystyle
		\omega(t):=\omega_{A'/\|A'\|_\infty}(t).
		$
		If $\omega$ satisfies a Dini condition, then
		$$
		\|T_{A,n}\|_{L^2\to L^2} \leq C\bigl(1+\sqrt{nC_\omega}+\log n\bigr)\|A'\|_\infty^n,
		$$
		where $C>0$ is an absolute constant.
	\end{thm}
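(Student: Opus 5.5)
Throughout I normalize $\|A'\|_\infty=1$, so that $|A'|\le 1$, $|q(x,y)|\le 1$ for $q(x,y):=\frac{A(x)-A(y)}{x-y}$, and $\omega$ is the modulus of continuity of $A'$. The plan has three steps: freeze the coefficient at short scales, control the long scales using the support of $A$ in $[0,1]$, and make the error at intermediate scales almost orthogonal. The short scales should produce the term $\sqrt{nC_\omega}$ and the intermediate scales the term $\log n$.

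\emph{Step 1: kernel estimates.} Since $q(x,y)=\int_0^1 A'(y+s(x-y))\,ds$, we have
$$
|q(x,y)-A'(x)|\le \omega(|x-y|),\qquad \partial_y q=\frac{q-A'(y)}{x-y}.
$$
Hence $|K_n(x,y)-A'(x)^n(x-y)^{-1}|\le n\,\omega(|x-y|)/|x-y|$. Combined with $|q|\le 1$, the derivative identity gives a Hörmander-type smoothness bound for the difference kernel
$$
E_n(x,y):=K_n(x,y)-\frac{A'(x)^n}{x-y},
$$
restricted to a dyadic scale $|x-y|\sim 2^{-j}$. Each such piece has Schur norm $\lesssim \min\{1,n\,\omega(2^{-j})\}$.

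\emph{Step 2: scale decomposition.} Write $T_{A,n}=\sum_{j\in\mathbb Z}T_j$, where $T_j$ has kernel $K_n(x,y)\varphi(2^{j}(x-y))$ for a smooth dyadic partition $\varphi$.

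For the large scales $2^{-j}\ge 1$, the support assumption gives $|A|\le 1$, hence $|q(x,y)|\le\min\{1,2/|x-y|\}$. Therefore $\sum_{2^{-j}\ge 1}\|T_j\|\lesssim 1+\sum_{k\ge 0}2^{-k(n-1)}\lesssim 1$.

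For the small scales $2^{-j}<1$, split $T_j=A'(\cdot)^nH_j+E_j$, where $H_j$ is the corresponding dyadic piece of the Hilbert transform. Then $\|\sum_j A'(\cdot)^nH_j\|\le \|A'\|_\infty^n\sup_N\|\sum_{j\le N}H_j\|\lesssim 1$, since $|A'(x)^n|\le 1$. Everything therefore reduces to bounding $\|\sum_{j\ge 0}E_j\|_{L^2\to L^2}$.

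\emph{Step 3: almost orthogonality of the errors.} I would prove Cotlar--Stein type bounds of the form
$$
\|E_j^*E_k\|+\|E_jE_k^*\|\lesssim 2^{-|j-k|/2}\,a_ja_k,\qquad a_j:=\min\{1,\sqrt{n\,\omega(2^{-j})}\}.
$$
The factor $2^{-|j-k|/2}$ should come from the Hörmander-type smoothness of $E_j$ in Step 1 together with a vanishing-mean property of the finer-scale piece. That vanishing mean holds up to a commutator term involving $A'$, which is again controlled by $\omega$.

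Granting this, the refined orthogonality argument (a square-function/Schur test in $j,k$ rather than plain Cotlar--Stein) yields $\|\sum_j E_j\|\lesssim(\sum_j a_j^2)^{1/2}$. I would then split the scales as follows.
\begin{itemize}
\item On the scales where $n\,\omega(2^{-j})\le 1$, we have $\sum_j a_j^2\le n\sum_j\omega(2^{-j})\lesssim nC_\omega$, giving the term $\sqrt{nC_\omega}$.
\item On the remaining scales, where $a_j=1$, I would not use $a_j$. Instead I would exploit the decay of $|q|^{n}$: at those scales $q$ must stay within $O(1/n)$ of $\pm1$, otherwise $|q|^n$ is exponentially small. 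I expect this to restrict the relevant scales to a range of $O(\log n)$ consecutive dyadic scales, each contributing $O(1)$, which gives the $\log n$ term.
\end{itemize}

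\emph{Main obstacle.} The hardest step is the almost-orthogonality estimate in Step 3. The difficulty is obtaining simultaneously the geometric decay in $|j-k|$ and the factors $a_ja_k$, rather than $a_j^2$ or merely $1$. Plain Cotlar--Stein would only give $\sum_j a_j$, which is too large. So I would first try to establish an $L^2$ square-function inequality $\|\sum_jE_jf\|_2^2\lesssim\sum_j a_j^2\|f\|_2^2$ via Littlewood--Paley projections $Q_k$, estimating $\|E_jQ_k\|\lesssim 2^{-|j-k|/2}a_j$ using the smoothness from Step 1. Making the $\log n$ bookkeeping on the saturated scales rigorous is the second delicate point.
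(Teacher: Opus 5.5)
\textbf{There is a genuine gap in Step 3.} Your Steps 1--2 are essentially fine: the large scales are handled by the support of $A$, and subtracting $A'(x)^nH_j$ at small scales is legitimate. But the whole theorem sits in Step 3, and the almost-orthogonality bound you propose there does not follow from the ingredients you list.

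\emph{No decay from the means.} The ``vanishing mean'' fails at exactly the order you need. For even $\varphi$ the frozen Hilbert pieces kill constants, so $E_j1=T_j1$. This is of size up to $a_j^2=\min\{1,n\,\omega(2^{-j})\}$, and it has no reason to have mean zero on intervals of length $2^{-k}\gg 2^{-j}$. Write $E_jf=(E_j1)f+\int E_j(\cdot,z)\bigl(f(z)-f(\cdot)\bigr)\,dz$. The multiplication part produces interactions $E_k^*\bigl[(E_j1)f\bigr]$ of norm up to $a_k^2a_j^2$, with no decay in $|j-k|$. This is the paraproduct term of the $T1$ theorem; it is controlled by a Carleson/BMO argument, not by Cotlar--Stein.

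\emph{No decay from smoothness on saturated scales.} From $\partial_yq=(q-A'(y))/(x-y)$ you only get $|\partial_yE(x,y)|\lesssim n\,\omega(|x-y|)/|x-y|^2$. So on scales with $n\,\omega(2^{-j})>1$, the piece $E_j$ is regular only at scale $2^{-j}/(n\,\omega(2^{-j}))$. The best you can expect is decay $\min\{1,n\,\omega(2^{-j})2^{-|j-k|}\}$, i.e.\ none for $|j-k|\lesssim\log n$. This is exactly the $\min\{2,3nh/r\}$ phenomenon that produces $\log n$ in Lemma~\ref{hormander}.

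\emph{A consistency check.} With your stated bound, ordinary Cotlar--Stein gives $\|\sum_jE_j\|\lesssim\sup_k\sum_j2^{-|j-k|/4}(a_ja_k)^{1/2}\lesssim\sup_ja_j\le1$, not $\sum_ja_j$ as you say. Since $a_j\le 1$ always, this would hold for every Lipschitz $A$ supported in $[0,1]$, with no Dini hypothesis. That is, $\|T_{A,n}\|\lesssim\|A'\|_\infty^n$ uniformly in $n$. This is far beyond what the paper proves; the paper leaves open whether any growth in $n$ is needed. So the estimate cannot be expected from these ingredients.

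\emph{The $\log n$ bookkeeping is also unfounded.} The number of scales with $n\,\omega(2^{-j})\ge1$ can be of order $\sqrt n$; take the Dini modulus $\omega(t)\simeq\log(1/t)^{-2}$. Smallness of $|q|^n$ does not make $E_j$ small, because $E_j$ also contains $-A'(x)^n(x-y)^{-1}$. In any case, your square-function bound would give $\sum_ja_j^2\le n\sum_j\omega(2^{-j})\lesssim 1+nC_\omega$ over all scales, with no $\log n$ at all.

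\textbf{What the paper does instead.} It never decomposes in scales.
\begin{enumerate}
\item It reduces to odd $n$, handling the even case via $T_{A,n+1}1=T_{A,n}(A')$. For odd $n$ one has $W_n\ge0$.
\item This positivity lets Lemma~\ref{propo} upgrade the testing bound $\|T_{A,n,\ve}\chi_I\|_{L^2(I)}\le\Lambda_n|I|^{1/2}$, through the weighted symmetrization identity, to $\|T_{A,n,\ve}(b\chi_I)\|_{L^2(I)}\lesssim\Lambda_n\|b\|_\infty|I|^{1/2}$.
\item Together with the Hörmander constant $1+\log n$, this gives $L^\infty\to\mathrm{BMO}$, then $H^1\to L^1$ by antisymmetry. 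Fefferman--Stein interpolation yields $\|T_{A,n}\|\lesssim\Lambda_n+1+\log n$.
\end{enumerate}
So the paraproduct obstruction that defeats your Step 3 is absorbed into the testing condition, and $\log n$ comes only from the Hörmander integral.

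The Dini hypothesis is used just once. After localizing to $J\subset[-1,2]$ (Lemma~\ref{lem1.1}), one has $(\alpha-\beta)^2W_n\le2n|\alpha-\beta|\le4n\,\omega(z-x)$. The change of variables $(x,y,z)=(x,x+\lambda h,x+h)$ then bounds the symmetrized integral by $\lesssim n|J|C_\omega$, hence $\Lambda_n\lesssim1+\sqrt{nC_\omega}$.

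To rescue your route, you would have to replace Cotlar--Stein by a $T1$-type argument for $\sum_jE_j$, with a paraproduct built from the $E_j1$, while keeping the smoothness losses logarithmic in $n$. That is precisely what Lemma~\ref{propo} accomplishes without a scale decomposition.
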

	
	In particular, under the assumptions of Theorem \ref{2}, the above bound yields improved control on the growth in $n$. 
	
	For the next result, we recall the definition of the fractional derivative.
	
	\begin{defn}
		Let $f(t,x)\in \mathcal S(\mathbb R^2)$. The fractional derivative of order $\frac12$ with respect to $t$ is defined, via the Fourier transform, by
		\begin{equation*}
			\widehat{\partial_t^{\frac{1}{2}} f}(\tau,\cdot)
			:=
			|\tau|^{\frac{1}{2}} \widehat{f}(\tau,\cdot).
		\end{equation*}
		
	\end{defn}
	
	By Plancherel's identity, this implies
	\begin{equation*}
		\|\partial_t^{\frac{1}{2}} f\|_{L^2(\R^2)}^2
		=
		\int_{\R} |\tau| \|\widehat{f}(\tau,\cdot)\|_{L^2_x}^2 \, d\tau.
	\end{equation*}

	We also prove the following auxiliary result:
	\begin{lem}
		\label{3}
		Let $A$ be a Lipschitz function supported in $[0,1]$ and define the function
		\begin{equation*}
			F_n(t,x):=\frac{1}{\|A'\|_\infty^n}\left( \frac{A(x+t)-A(x)}{t} \right)^n.
		\end{equation*}
		Then,
		\begin{equation*}
			\|T_{A,n}\|_{L^2\to L^2} \leq C\left(\sqrt{n}+\|\partial_t^{\frac{1}{2}} F_n\|_{L^2} \right)\|A'\|_\infty^n,
		\end{equation*}
		where $C>0$ is an absolute constant and $\| \cdot \|_{L^2}$ denotes the $L^2(\R^2)$ norm.
	\end{lem}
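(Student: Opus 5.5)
The plan is to estimate $\|T_{A,n}\|$ by duality and to transfer the regularity in $t$ onto the kernel through Plancherel in the $t$-variable. Normalize $\|A'\|_\infty=1$ and write $y=x+t$, so that $K_n(x,x+t)=-F_n(t,x)/t$. For $f,g\in\mathcal C^\infty_c(\R)$ this gives
\[
\langle T_{A,n,\ve}f,g\rangle=-\int_{\R}\int_{|t|>\ve}F_n(t,x)\,\frac{f(x+t)}{t}\,\overline{g(x)}\,dt\,dx .
\]
I will split the $t$-integration with a smooth partition of unity into three regions, each to be bounded by $C(\sqrt n+\|\partial_t^{1/2}F_n\|_{L^2})\|f\|_2\|g\|_2$ uniformly in $\ve$:
\begin{itemize}
\item a near-diagonal region $|t|\lesssim 1/n$;
\item an intermediate region $1/n\lesssim|t|\lesssim 2$;
\item a far region $|t|\gtrsim 2$.
\end{itemize}

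\emph{Far region.} Since $\operatorname{supp}A\subset[0,1]$, for $|t|\ge 2$ at most one of $A(x)$, $A(x+t)$ is nonzero, so $|F_n(t,x)|\le (1/|t|)^n$. In particular the kernel is $O(|t|^{-2})$ there and is integrable. Schur's test then gives an $O(1)$ bound.

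\emph{Near-diagonal region.} The symmetric truncation $\int_{\ve<|t|<1/n}f(x+t)\,dt/t$ is a truncated Hilbert transform, and I write $F_n(t,x)=F_n(0,x)+\bigl(F_n(t,x)-F_n(0,x)\bigr)$ with $F_n(0,x)=A'(x)^n$ (for $A\in\mathcal C^1$, which suffices by approximation). The first piece is a bounded multiplier $A'(x)^n$ times a truncated Hilbert transform, so it is $O(1)$ uniformly in $\ve$. For the remainder I use $|\partial_tF_n|\le n\,|\partial_t(\text{difference quotient})|$. This should be combined with the $L^2$ information $\|\partial_t^{1/2}F_n\|$ rather than with pointwise bounds, since the pointwise Calderón--Zygmund constants are linear in $n$.

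\emph{Intermediate region.} This is where the main term lives. Put $\Phi(t,x)=\chi(t)f(x+t)\overline{g(x)}/t$, with $\chi$ the smooth cutoff to $1/n\lesssim|t|\lesssim 2$, and apply Plancherel in $t$:
\[
\Bigl|\int\!\!\int F_n\,\Phi\Bigr|
\le\|\partial_t^{1/2}F_n\|_{L^2}\,
\Bigl(\int\!\!\int|\tau|^{-1}|\widehat\Phi(\tau,x)|^2\,d\tau\,dx\Bigr)^{1/2}.
\]
Here $\widehat\Phi(\tau,x)=\overline{g(x)}\,e^{ix\tau}\,\bigl(H_\chi(e^{-i\cdot\tau}f)\bigr)(x)$, where $H_\chi$ is a smoothly truncated Hilbert transform, and this is uniformly bounded in $L^2_x$ after multiplying by $\|g\|$. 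The singularity of $|\tau|^{-1}$ must be tamed at both ends:
\begin{itemize}
\item for $|\tau|\lesssim 1/2$ the support of $\chi$ gives $\widehat\Phi(\tau,\cdot)$ an extra factor $\min(1,|\tau|)$-type cancellation, because $\chi(t)/t$ is odd;
\item for $|\tau|\gtrsim n$ the smoothness of $\chi$ at scale $1/n$ gives decay $(n/|\tau|)^{N}$.
\end{itemize}
On the remaining range $1\lesssim|\tau|\lesssim n$ I would \emph{not} use the $|\tau|^{-1}$ weight. Instead I would bound $\widehat\Phi$ by Cauchy--Schwarz against $\|F_n\|_{L^2(d\tau)}$ on that range, using $|F_n|\le1$ on the bounded $t$-region, which is where a factor $\sqrt n$ should appear: the measure of the frequency window is of order $n$, and a square root is taken. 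The precise bookkeeping would be a weighted Cauchy--Schwarz: split $\langle F_n,\Phi\rangle$ into $|\tau|\le M$ and $|\tau|>M$, use $\int_{|\tau|\le M}|\widehat\Phi|^2\lesssim M\|f\|^2\|g\|^2$ there and the $|\tau|^{-1}$ weight above $M$, then optimize $M$.

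The main obstacle I expect is exactly this last step: extracting $\sqrt n$ rather than $n$ or $\log n$ from the Fourier-side decomposition. This requires a uniform $L^2_x$ bound on the modulated truncated Hilbert transforms together with sharp control of the cutoff at scale $1/n$, which is where the Hörmander-type (integral) smoothness replaces the pointwise CZ constant. I would first try to make the cutoff scale and the frequency threshold $M$ coincide, so that both error terms are of size $\sqrt n$.
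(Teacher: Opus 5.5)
Your proposal has a genuine gap. Most of the effort goes into a region that is easy, and there is no mechanism for the region where the difficulty actually lies.

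\textbf{The intermediate region is trivial, and the Fourier step there fails.} Since $|F_n|\le 1$, Schur's test bounds the region $1/n\lesssim|t|\lesssim 2$ by $\int_{1/n\lesssim|t|\lesssim 2}dt/|t|\lesssim\log n$, so there is no main term there. The Plancherel argument you propose for this region is also incorrect as stated. Write $h_x(t):=\chi(t)f(x+t)/t$, so that $\widehat\Phi(\tau,x)=\overline{g(x)}\,\widehat{h_x}(\tau)$.
\begin{itemize}
\item $\widehat{h_x}(0)=\int\chi(t)f(x+t)\,dt/t$ is a truncated Hilbert transform of $f$ at $x$, nonzero in general. Oddness of $\chi(t)/t$ only cancels against functions even in $t$. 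Since $h_x$ has compact support, $\widehat{h_x}$ is smooth, so $\int_{|\tau|<1}|\tau|^{-1}|\widehat{h_x}(\tau)|^2\,d\tau=\infty$ whenever $\widehat{h_x}(0)\neq 0$.
\item Away from $\tau=0$, $\iint|\tau|^{-1}|\widehat\Phi|^2\,d\tau\,dx=\int|g(x)|^2\|h_x\|^2_{\dot H^{-1/2}}\,dx$. Because $g\in L^2$ is arbitrary, you would need $\sup_x\|h_x\|_{\dot H^{-1/2}}\lesssim\|f\|_2$. Taking $f$ to be $\sqrt n$ times the indicator of an interval of length $1/n$ at distance $\sim 1/n$ from $x$ shows the best constant is $\gtrsim\sqrt n$, already at frequencies $|\tau|\sim n$.
\end{itemize}
So this Cauchy--Schwarz can give at most $\sqrt n\,\|\partial_t^{1/2}F_n\|_{L^2}$, a product rather than the sum in the statement.

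\textbf{The near-diagonal region is the real obstacle, and it is not addressed.} For merely Lipschitz $A$, the region $|t|\lesssim 1/n$ is the full commutator at small scales. $F_n(t,x)-A'(x)^n$ need not be small, and the remainder kernel is not absolutely integrable. The natural bound $\|(F_n(t,x)-F_n(0,x))/t\|_{L^2(\R^2)}\|f\|_2\|g\|_2$ only gives $Cn\|A'\|_{\dot H^{1/2}}$. It cannot be rewritten in terms of $\|\partial_t^{1/2}F_n\|_{L^2}$, because the endpoint Hardy inequality $\int|F(t)-F(0)|^2t^{-2}\,dt\lesssim\|F\|^2_{\dot H^{1/2}}$ fails. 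Your sketch offers no mechanism here.

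\textbf{The missing idea is the quadratic structure the paper exploits.} Instead of the bilinear form, the paper estimates the testing quantity $\|T_{A,n,\ve}\chi_I\|^2_{L^2(I)}$ for the antisymmetric kernel. Up to $O(|I|)$, symmetrization turns it into $\iiint\bigl(\frac{\alpha-\beta}{z-x}\bigr)^2W_n(\alpha,\beta,\gamma)$.
\begin{itemize}
\item On $\{\alpha\beta<0\}$, the bound $|W_n|\le n$ and Lemma~\ref{lem:triple} give $n\int_I|A'-\langle A'\rangle_I|^2\le n|I|$. This is where $\sqrt n$ comes from.
\item On $\{\alpha\beta>0\}$, Lemma~\ref{pos} and the ordering $\alpha\ge\gamma\ge\beta>0$ give $(\alpha-\beta)^2W_n\le(\alpha^n-\gamma^n)^2\bigl(\frac{\alpha-\beta}{\alpha-\gamma}\bigr)^2$. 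Using $\frac{\alpha-\beta}{z-x}=\frac{\alpha-\gamma}{z-y}$ and the variables $t=z-x$, $s=y-x$, the integrand becomes $|F_n(t,x)-F_n(s,x)|^2/|t-s|^2$. Its integral is $C\|\partial_t^{1/2}F_n\|^2_{L^2}$ by Plancherel in $t$.
\end{itemize}
Lemma~\ref{propo} (for odd $n$, where $W_n\ge 0$) then turns this testing constant into an $L^2$ bound via $L^\infty\to\mathrm{BMO}$, duality and interpolation. It pays only the H\"ormander constant $1+\log n$ of Lemma~\ref{hormander}, not the linear pointwise Calder\'on--Zygmund constant.

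The square roots in the statement are square roots of these quadratic expressions. The symmetrization is what converts the principal-value cancellation at the diagonal into differences $F_n(t,x)-F_n(s,x)$. A linear pairing of $F_n$ against $f(x+t)\overline{g(x)}/t$ has no substitute for this step.
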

	
	For the next theorem, we define the logarithmic Besov space
	\(B^{0,1}_{1,1}(\mathbb R)\) via a Littlewood--Paley decomposition
	(see e.g. \cite{CD1}) by
	\begin{equation}
		\label{norm_Besov}
		\|f\|_{B^{0,1}_{1,1}}
		:=
		\|\Delta_{-1}f\|_{L^1} + \sum_{j\geq 0}(1+j)\|\Delta_j f\|_{L^1}.
	\end{equation}
	Here, for \(j\ge 0\), the dyadic block \(\Delta_j\) is defined by
	$\displaystyle
	\Delta_j f
	=
	\mathcal{F}^{-1}\bigl(\varphi(2^{-j}\xi)\widehat f(\xi)\bigr),
	$
	where \(\varphi\in \mathcal C_c^\infty(\mathbb R\setminus\{0\})\) is supported in
	$
	\left\{\tfrac12\le |\xi|\le 2\right\}.
	$
	The term $\Delta_{-1}f$ accounts for all the low frequency terms. It is not hard to prove that $\|\Delta_{-1}f\|_{L^1}\leq C\|f\|_{L^1}$, where $C$ only depends on the $L^1$ norm of the Schwartz kernel associated with the inverse Fourier transform of the compactly supported multiplier defining $\Delta_{-1}$. Thus, $C$ only depends on $\varphi$ and can be assumed to be 1.
	
	%For the next theorem, we define the logarithmic Besov space
	%\(B^{0,1}_{1,1}(\mathbb R)\) through a Littlewood--Paley decomposition
	%(see e.g. \cite{CD}) by
	%$$
	%\|f\|_{B^{0,1}_{1,1}}
	%:=
	%\sum_{j\ge -1}(1+j_+)\|\Delta_j f\|_{L^1},
	%\qquad j_+=\max\{j,0\},
	%$$ where for $j\geq 0$ the operator $\Delta_j$ is defined through its Fourier symbol $\varphi(2^{-j}\xi)$, with $\varphi\in \mathcal{C}^\infty_c(\R\setminus{\{0\}})$ and support in $\{\frac{1}{2}\leq |\xi|\leq 2\}$, namely given a function $f\in\mathcal{S}$, $\Delta_jf=\mathcal{F}^{-1}(\varphi(2^{-j}\xi)\widehat f)$.
	%	\begin{thm}
		%	\label{5}
		%	Let $A$ be a Lipschitz function supported in $[0,1]$ and such that $A'\in B^{0,1}_{1,1}$. Then,
		%	\begin{equation*}
			%	\|T_{A,n}\|_{L^2\to L^2} \leq C\left( 1+\log n+\sqrt{n}\sqrt{\frac{\|A'\|_{B^{0,1}_{1,1}}}{\|A'\|_\infty} }\right)\|A'\|_\infty^n,
			%\end{equation*}
			%where $C>0$ is an absolute constant.
			%\end{thm}
			\begin{thm}
				\label{5}
				Let $A$ be a Lipschitz function supported in $[0,1]$ such that $A'\in B^{0,1}_{1,1}$. Then
				$$
				\|T_{A,n}\|_{L^2\to L^2}
				\le
				C\sqrt n\|A'\|_\infty^n\left(
				1+\frac{\|A'\|_{B^{0,1}_{1,1}}}{\|A'\|_\infty}
				\right)^{1/2},
				$$
				where $C>0$ is an absolute constant.
			\end{thm}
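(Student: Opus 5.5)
The plan is to deduce Theorem \ref{5} from Lemma \ref{3}. It suffices to show
\[
\|\partial_t^{1/2}F_n\|_{L^2(\R^2)}^2 \le C\,n\Bigl(1+\frac{\|A'\|_{B^{0,1}_{1,1}}}{\|A'\|_\infty}\Bigr).
\]
Lemma \ref{3} then gives $\|T_{A,n}\|\le C(\sqrt n + \sqrt n\,(1+\|A'\|_{B^{0,1}_{1,1}}/\|A'\|_\infty)^{1/2})\|A'\|_\infty^n$, which is the claim. Normalize $\|A'\|_\infty=1$ and write $a=A'$, so that
\[
G(t,x)=\frac{A(x+t)-A(x)}{t}=\int_0^1 a(x+st)\,ds,\qquad F_n=G^n,\qquad |G|\le1 .
\]
To compute the half-derivative in $t$ I will use the standard difference characterization
\[
\|\partial_t^{1/2}F\|_{L^2}^2 = c\iiint \frac{|F(t+h,x)-F(t,x)|^2}{h^2}\,dh\,dt\,dx ,
\]
which is valid since $F_n$ decays like $t^{-n}$ for $|t|\gtrsim 1$ (because $A$ is supported in $[0,1]$). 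Since $|G|\le1$, we have $|G^n(t+h)-G^n(t)|\le \min(2,\,n|G(t+h)-G(t)|)$.

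The next step is to split the $h$-integral at the scale $|h|\sim 1/n$ relative to $t$. For small increments I use the factor $n^2|G(t+h)-G(t)|^2$; for large increments I use the bound $4$, and the integral $\int_{|h|>\delta}h^{-2}\,dh$ gives a factor $\delta^{-1}$. More precisely, the domain of integration in $(t,x)$ is essentially $|t|\lesssim 1$, $x\in[-2,2]$; the tail $|t|\gg1$ contributes $O(1)$ thanks to the $t^{-n}$ decay. With threshold $\delta$ the large-$h$ part contributes $O(1/\delta)$. For the small-$h$ part I use $G(t+h,x)-G(t,x)=\int_0^1 [a(x+st+sh)-a(x+st)]\,ds$ together with $|G(t+h)-G(t)|\le 2$, which gives
\[
n^2\iint_{|h|<\delta}\frac{|G(t+h)-G(t)|^2}{h^2}\le 2n^2\iint_{|h|<\delta}\frac{|G(t+h,x)-G(t,x)|}{h^2}\,dh .
\]
Averaging in $x$ then brings in the $L^1$-modulus $\omega_1(a,\sigma)=\sup_{|u|\le\sigma}\|a(\cdot+u)-a\|_{L^1}$ with $\sigma=s|h|$. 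This produces a bound of the type $n^2\int_0^\delta \omega_1(a,h)\,h^{-2}\,dh$, with an additional weight coming from the $s$ and $t$ integrations. The naive balance $\delta=1/n$ loses a factor.

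The main obstacle is exactly this balancing: one must obtain $n\cdot(1+\|a\|_{B^{0,1}_{1,1}})$ and not $n^2\|a\|_{B^1_{1,1}}$. The plan is to do the decomposition dyadically with the Littlewood--Paley pieces $a=\sum_j\Delta_j a$ instead of the modulus of continuity. For the block $\Delta_j a$ the relevant increment is controlled by $\min(1,2^j|h|)\|\Delta_j a\|_{L^1}$. The $t$-average in $G$ contributes an extra smoothing factor $\min(1,(2^j|t|)^{-1})$ (averaging over an interval of length $|t|$). Integrating the weights $h^{-2}$ and $dt$ over the dyadic ranges $|t|\sim 2^{-k}$, $|h|\le |t|$ then produces, for each $j$, a sum over $k\le j$ of $O(1)$ terms. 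This sum yields the factor $(1+j)\|\Delta_j a\|_{L^1}$, while the factor $n$ arises from optimizing the cutoff $|h|\sim |t|/n$ against the trivial bound. I expect the delicate bookkeeping to be the interplay between the $n$-threshold and the dyadic scales in $t$. I would first try the cutoff $|h|<|t|/n$ and check that both regimes give at most $n\sum_j(1+j)\|\Delta_j a\|_{L^1}+Cn$. If the full $n^2$ prefactor appears, I would instead use $|G^n(t+h)-G^n(t)|^2\le 2n|G(t+h)-G(t)|$, since $|G|\le1$ and $|G^n(t+h)-G^n(t)|\le 2$. This keeps the power of $n$ linear and leaves only an $L^1$-type difference, which matches the $B^{0,1}_{1,1}$ norm.
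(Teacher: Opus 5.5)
Your reduction to Lemma~\ref{3} and the difference-quotient formula for $\|\partial_t^{1/2}F_n\|_{L^2}^2$ are fine. The gap is that the central estimate $\|\partial_t^{1/2}F_n\|_{L^2}^2\lesssim n(1+\|A'\|_{B^{0,1}_{1,1}})$ is never obtained, and the mechanisms you propose cannot give it.

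\emph{Passing from a squared difference to a single absolute difference.} This happens in the small-$h$ step, in the dyadic bookkeeping, and in the fallback $|G^n(t+h)-G^n(t)|^2\le 2n|G(t+h)-G(t)|$. In each case you end up with $\iiint_{|h|<\delta}|G(t+h,x)-G(t,x)|\,h^{-2}\,dh\,dt\,dx$, which is $+\infty$ for every $A\not\equiv 0$. Indeed, for $x$ in a set of positive measure, $t\mapsto G(t,x)$ is locally Lipschitz and nonconstant on $(0,\infty)$. So near the diagonal the integrand behaves like $|\partial_tG(t,x)|/|h|$, which is not integrable in $h$. This is the one-dimensional case of Brezis' observation that $\iint|u(t)-u(s)|\,|t-s|^{-2}\,dt\,ds<\infty$ forces $u$ to be constant. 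The same divergence appears in your other formulations:
\begin{itemize}
\item $\int_0^\delta\omega_1(a,h)h^{-2}\,dh=\infty$, because $\omega_1(a,h)\ge c_a h$ for any nonzero compactly supported $a$.
\item $\int_{|h|\le|t|}\min(1,2^j|h|)h^{-2}\,dh=\infty$, so your ``sum over $k\le j$ of $O(1)$ terms'' is a sum of divergent terms.
\end{itemize}

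\emph{The cutoffs do not help either.} With $|h|\sim|t|/n$, the trivially bounded part is of order $\int_{|t|\lesssim 1}n|t|^{-1}\,dt=\infty$. If instead you keep the square and use $n^2|G(t+h)-G(t)|^2$ on $|h|<1/n$, take $A'=\chi_{[1/4,1/2]}-\chi_{[1/2,3/4]}$, which is $BV$ and hence in $B^{0,1}_{1,1}$. Then $\|\partial_tG(t,\cdot)\|_{L^2_x}^2\simeq|t|^{-1}$ for small $|t|$, and the small-$h$ part is already $\gtrsim n\log n$. The underlying issue is that an $L^1$-based, zero-smoothness hypothesis cannot control absolute difference quotients. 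It can only enter through an expression that keeps cancellation.

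\emph{The missing idea, as in the paper.} The paper localizes with $\psi(t)$; the tail and the term where the difference falls on $\psi$ are $O(1)$. It then uses the inequality $(a^n-b^n)^2\le n(a-b)(a^{2n-1}-b^{2n-1})$ of Lemma~\ref{lem4.2}. Its right-hand side is nonnegative and still vanishes quadratically as $a\to b$.
\begin{itemize}
\item By the fractional integration by parts of Lemma~\ref{lem4.1}, the resulting integral equals $\frac{2}{c}\int\langle\psi^2G(\cdot,x)^{2n-1},\Lambda_tG(\cdot,x)\rangle\,dx$, up to a commutator term controlled by Lemma~\ref{lem4.3}.
\item Since $|G|\le1$, this gives $\|\partial_t^{1/2}F_n\|_{L^2}^2\lesssim n(1+\|\psi^2\Lambda_tG\|_{L^1})$; see \eqref{eq4.4}. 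Here $\Lambda_t$ (symbol $|\tau|$) is a principal value, and its cancellation is exactly what your absolute values destroy.
\item Lemma~\ref{lem7.5} gives $\|\Lambda_tg_j(t,\cdot)\|_{L^1_x}\lesssim(1+\min\{2^j,|t|^{-1}\})\|\Delta_jA'\|_{L^1}$. Integrating over $|t|\le 3$ produces the weight $1+j$.
\end{itemize}
Your intuition about where $(1+j)$ comes from is right, but it has to be applied to $\Lambda_tG$, not to $|G(t+h)-G(t)|/h^2$. The factor $n$ comes for free from Lemma~\ref{lem4.2}, not from balancing a cutoff.
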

			
			We conclude with a brief outline of the paper. Section~\ref{secciolipschitz} establishes Theorem~\ref{main} via symmetrization and the $T(1)$ theorem, yielding the best known polynomial growth for the $L^2$ norm of the $n$-th Calderón commutator. Since this approach does not provide sufficiently precise control of the constants needed for the refined estimates, Section \ref{secciointerpolacio} develops an alternative framework based on interpolation and integral estimates of Hörmander type, avoiding the use of pointwise Calderón–Zygmund bounds for the kernel. Although this framework could also be used to recover the general Lipschitz bound, it is introduced primarily to handle the refined results and underlies all subsequent sections. Section~\ref{secciodini} proves Theorem~\ref{2}, obtaining improved sublinear growth under a Dini continuity assumption on $A'$. Section~\ref{secciomig} establishes the auxiliary fractional derivative estimate of Lemma~\ref{3}, and Section~\ref{secciobesov} combines these ingredients to prove Theorem~\ref{5} under logarithmic Besov regularity assumptions. Finally, Section~\ref{examples} shows that the Dini assumptions appearing in Theorem~\ref{2} are incomparable with other fractional Sobolev regularity assumptions that can be deduced from Theorem~\ref{5}; see Corollary~\ref{4} for details.
			
			Throughout the paper, $C>0$ denotes a positive absolute constant whose value may change from one occurrence to another. We write $A\lesssim B$ if there exists a positive constant $C$ such that $A\leq CB.$ Similarly, $A\simeq B$ means that both $A\lesssim B$ and $B\lesssim A$ hold. 
			
			We will also use the standard notation for $L^p$ and $\mathrm{BMO}$ spaces. Moreover, when $f:\mathbb{R}^2\to\mathbb{R}$, the notation $\|f\|_{L^2}$ will always refer to the $L^2(\mathbb{R}^2)$ norm of the function.
			
			Without loss of generality, we shall also normalize $A$ to be $\frac{A}{\|A'\|_\infty}$. That is, throughout the whole text, $A$ will be $1$-Lipschitz. Observe that this can be done due to the identity
			\begin{equation*}
				\|T_{\frac{A}{\|A'\|_\infty},n}\|_{L^2\to L^2} = \frac{1}{\|A'\|_\infty^n}\|T_{A,n}\|_{L^2\to L^2}.
			\end{equation*}
			
			\section{Proof of Theorem \ref{main}}
			\label{secciolipschitz}
			Consider the kernel of the $n$-th Calderón commutator
			$$
			K_n(x,y)
			:=
			\frac{1}{x-y}
			\left(\frac{A(x)-A(y)}{x-y}\right)^n,
			\qquad x\ne y.
			$$
			Then
			$$
			T_{A,n,\ve}f(x)
			=
			\int_{|x-y|>\ve} K_n(x,y)f(y)\,d y.
			$$
			Fix now an interval $I=[a,b]$. Since $T_{A,n}$ is an antisymmetric Calderón--Zygmund operator, the $T(1)$ theorem reduces its $L^2$ boundedness establishing uniform testing estimates on characteristic functions of intervals (see \cite{DJ}). More precisely, it suffices to control quantities of the form
			$$
			\frac{\|T_{A,n,\varepsilon}\chi_I\|_{L^2(I)}^2}{|I|}
			$$
			uniformly in $I$ and $\varepsilon$.
			
			To estimate this quantity, we follow the symmetrization strategy introduced by Verdera in the study of the first Calderón commutator. The idea is to rewrite the testing norm in a form that separates a symmetric contribution from an error term. The latter will be estimated directly, while the former will be analyzed through an algebraic identity revealing an underlying positivity structure.
			
			Observe that for each $0<\ve\ll 1$, we can write
			$$
			\|T_{A,n,\ve}\chi_I\|_{L^2(I)}^2
			=
			\int_I |T_{A,n,\ve}\chi_I(x)|^2\,d x
			=
			\int_I
			\int_{I_\ve(x)}
			\int_{I_\ve(x)}
			K_n(x,y)K_n(x,z)\,d z\,d y\,d x,
			$$
			where
			$$
			I_\ve(x)=\{t\in I:\ |x-t|>\ve\}.
			$$
			
			Equivalently, the above expression can be written as an integral over
			$$
			D_\ve
			:=
			\{(x,y,z)\in I^3:\ |x-y|>\ve,\ |x-z|>\ve\}.
			$$
			
			We now define the symmetric set
			$$
			S_\ve
			:=
			\{(x,y,z)\in I^3:\ |x-y|>\ve,\ |x-z|>\ve,\ |y-z|>\ve\}
			$$
			and its complement inside $D_\ve$,
			$$
			E_\ve
			:=
			D_\ve\setminus S_\ve.
			$$
			
			Then
			\begin{equation}\label{decomposition}
				\|T_{A,n,\ve}\chi_I\|_{L^2(I)}^2
				=
				\iiint_{S_\ve} K_n(x,y)K_n(x,z)\,d z\,d y\,d x
				+
				\iiint_{E_\ve} K_n(x,y)K_n(x,z)\,d z\,d y\,d x.
			\end{equation}

			In the next lemma, we estimate the second term in the right-hand side of \eqref{decomposition}, which should be understood as an error term.
			
			\begin{lem}\label{lem:error}
				$$
				\iiint_{E_\ve}
				|K_n(x,y)K_n(x,z)|\,d z\,d y\,d x
				\le
				C|I|.
				$$
			\end{lem}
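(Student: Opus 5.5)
We use only the size bound $|K_n(x,y)|\le |x-y|^{-1}$, valid because $A$ has been normalized to be $1$-Lipschitz. No cancellation is needed, and the constant will not depend on $n$. On $E_\ve$ we have $|x-y|>\ve$, $|x-z|>\ve$ and $|y-z|\le\ve$. So it suffices to show
\begin{equation*}
\iiint_{E_\ve}\frac{dz\,dy\,dx}{|x-y|\,|x-z|}\le C|I|,
\end{equation*}
with $C$ absolute and, importantly, independent of $\ve$ and of $I$.

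The key step is to compare $|x-z|$ with $|x-y|$. We split $E_\ve$ into two pieces according to whether $|x-y|>2\ve$ or $\ve<|x-y|\le 2\ve$.

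\emph{Piece $|x-y|>2\ve$.} Here $|y-z|\le\ve<\tfrac12|x-y|$, so $|x-z|\ge\tfrac12|x-y|$ and the integrand is at most $2|x-y|^{-2}$. For fixed $x,y$ the $z$-integral runs over a set of length at most $2\ve$, giving a factor $\le 2\ve$. Integrating in $y$ over $|x-y|>2\ve$ gives at most $\int_{|t|>2\ve}4\ve\,t^{-2}\,dt\le 4$. Integrating in $x\in I$ then yields at most $4|I|$.

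\emph{Piece $\ve<|x-y|\le 2\ve$.} Here both $|x-y|$ and $|x-z|$ exceed $\ve$, so the integrand is at most $\ve^{-2}$. For fixed $x$, the $y$-set has measure at most $2\ve$, and for fixed $x,y$ the $z$-set has measure at most $2\ve$. The total is therefore at most $4|I|$.

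Adding the two pieces gives the claim with $C=8$. There is no real obstacle. The only point requiring care is that the bounds must be uniform in $\ve$ and in $I$; this holds because every $z$-integration is over a window of length $\le 2\ve$, which is compensated either by the $t^{-2}$ decay in $y$ or by the smallness of the $y$-window.
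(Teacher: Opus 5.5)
Your proof is correct and follows essentially the same route as the paper: you use the size bound $|K_n(x,y)|\le |x-y|^{-1}$, split at scale $2\ve$ in one of the two distances, compare $|x-z|$ with $|x-y|$ in the far piece via the triangle inequality, and bound the near piece crudely. The differences are cosmetic. The paper splits according to $|x-z|$ rather than $|x-y|$, which is equivalent by the $y\leftrightarrow z$ symmetry of $E_\ve$. It also bounds the near piece by logarithmic integrals instead of $\ve^{-2}$ times window lengths. Your version has the small bonus of an explicit constant $C=8$.
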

			
			\begin{proof}
				Since $A$ is Lipschitz with $\|A'\|_\infty=1$, for all $x\ne y$, $|K_n(x,y)|
				\le
				\frac{1}{|x-y|}$. Hence it follows that
				$$
				\iiint_{E_\ve}
				|K_n(x,y)K_n(x,z)|\,d z\,d y\,d x
				\le
				\iiint_{E_\ve}
				\frac{d x\,d y\,d z}{|x-y||x-z|}.
				$$
				
				Since
				$$
				E_\ve
				=
				\{(x,y,z)\in I^3:\ |x-y|>\ve,\ |x-z|>\ve,\ |y-z|\le \ve\},
				$$
				we can write
				$$
				\iiint_{E_\ve}
				\frac{d x\,d y\,d z}{|x-y||x-z|}
				=
				\int_I
				\int_{|x-z|>\ve}
				\left(
				\int_{\substack{|y-z|\le \ve\\ |x-y|>\ve}}
				\frac{d y}{|x-y|}
				\right)
				\frac{d x\,d z}{|x-z|}.
				$$
				
				We split the last integral into:
				$$
				\iiint_{E_\ve}
				\frac{d x\,d y\,d z}{|x-y||x-z|}
				=
				I_1+I_2,
				$$
				where
				$$
				I_1
				:=
				\int_I
				\int_{|x-z|>2\ve}
				\left(
				\int_{\substack{|y-z|\le \ve\\ |x-y|>\ve}}
				\frac{d y}{|x-y|}
				\right)
				\frac{d x\,d z}{|x-z|},
				$$
				and
				$$
				I_2
				:=
				\int_I
				\int_{\ve<|x-z|\le 2\ve}
				\left(
				\int_{\substack{|y-z|\le \ve\\ |x-y|>\ve}}
				\frac{d y}{|x-y|}
				\right)
				\frac{d x\,d z}{|x-z|}.
				$$
				
				For $I_1$, observe that if $|x-z|>2\ve$ and $|y-z|\le \ve$, then
				$$
				|x-y|
				\ge
				|x-z|-|z-y|
				\ge
				|x-z|-\ve
				\ge
				\frac{|x-z|}{2}.
				$$
				Therefore
				$$
				\int_{\substack{|y-z|\le \ve\\ |x-y|>\ve}}
				\frac{d y}{|x-y|}
				\le
				\frac{2}{|x-z|}
				\cdot |\{y:\ |y-z|\le \ve\}|
				\le
				\frac{4\ve}{|x-z|}.
				$$
				Hence
				$$
				I_1
				\le
				4\ve
				\int_I
				\int_{|x-z|>2\ve}
				\frac{d z\,d x}{|x-z|^2}\le 4|I|.
				$$
				
				For $I_2$, we are in the region
				$
				\ve<|x-z|\le 2\ve.
				$
				Moreover, since $|y-z|\le \ve$, the triangle inequality gives
				$$
				|x-y|\le |x-z|+|z-y|\le 3\ve.
				$$
				Together with the constraint $|x-y|>\ve$, this implies
				$
				\ve<|x-y|\le 3\ve.
				$
				Hence
				$$
				\int_{\substack{|y-z|\le \ve\\ |x-y|>\ve}}
				\frac{d y}{|x-y|}
				\le
				\int_{\ve<|x-y|\le 3\ve}
				\frac{d y}{|x-y|}
				\le
				2\int_{\ve}^{3\ve}\frac{dr}{r}
				\le C.
				$$
				Therefore,
				$$
				I_2
				\le
				C
				\int_I
				\int_{\ve<|x-z|\le 2\ve}
				\frac{d z\,d x}{|x-z|}.
				$$
				For each fixed $x\in I$,
				$$
				\int_{\substack{z\in I\\ \ve<|x-z|\le 2\ve}}
				\frac{d z}{|x-z|}
				\le
				2\int_{\ve}^{2\ve}\frac{dr}{r}
				\le C.
				$$
				Consequently,
				$
				I_2\le C|I|,
				$
				for some absolute constant $C$.
				
				Combining both estimates, we get the desired bound for the error term:
				$$
				\iiint_{E_\ve}
				|K_n(x,y)K_n(x,z)|\,d z\,d y\,d x
				\le
				C|I|.
				$$
			\end{proof}
			We now turn to the symmetrization of the main term in \eqref{decomposition}.
			We begin by writing
			$$
			S_n(x,y,z)
			:=
			K_n(x,y)K_n(x,z)
			-
			K_n(x,y)K_n(y,z)
			+
			K_n(x,z)K_n(y,z).
			$$
			
			so that by symmetry under permutations of the variables, we obtain
			$$
			\iiint_{S_\ve}
			K_n(x,y)K_n(x,z)\,d z\,d y\,d x
			=
			\frac13
			\iiint_{S_\ve}
			S_n(x,y,z)\,d z\,d y\,d x
			=
			2\iiint_{\Sigma_\ve}
			S_n(x,y,z)\,d z\,d y\,d x,
			$$
			where
			$$
			\Sigma_\ve
			:=
			\{(x,y,z)\in I^3:\ a<x<y<z<b,\ y-x>\ve,\ z-y>\ve\}.
			$$
			Combining the first equality above with \eqref{decomposition} and Lemma~\ref{lem:error}, we get
			\begin{equation}\label{eq:unweighted-testing-condition}
				\|T_{A,n,\varepsilon}\chi_I\|_{L^2(I)}^2
				=
				\frac13
				\iiint_{S_\varepsilon}
				S_n(x,y,z)\,dx\,dy\,dz
				+
				O(|I|).
			\end{equation}
			We now rewrite $S_n$ conveniently. Define
			$$
			\alpha:=\frac{A(y)-A(x)}{y-x},\qquad
			\beta:=\frac{A(z)-A(y)}{z-y},\qquad
			\gamma:=\frac{A(z)-A(x)}{z-x}.
			$$
			
			Then $|\alpha|,|\beta|,|\gamma|\le 1$, and if
			$$
			\lambda:=\frac{y-x}{z-x}, \qquad \text{we have} \qquad \gamma=\lambda\alpha+(1-\lambda)\beta.
			$$
			
			\begin{lem}\label{Wn}
				For $x<y<z$, one has
				$$
				S_n(x,y,z)
				=
				\left(\frac{\alpha-\beta}{z-x}\right)^2
				W_n(\alpha,\beta,\gamma),
				$$
				where
				$$
				W_n(\alpha,\beta,\gamma)
				=
				\sum_{k=0}^{n-1}
				\gamma^{n-1-k}
				\sum_{j=0}^{n-k-1}
				\alpha^{n-1-j}\beta^{k+j}=\sum_{\substack{j+k+l=2n-2\\j,k,l\leq n-1}}\alpha^j\beta^k\gamma^l.
				$$
			\end{lem}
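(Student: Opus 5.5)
Since the identity is purely algebraic, the plan is to express everything in terms of $\alpha,\beta,\gamma$ and the gaps $y-x$, $z-y$, $z-x$, then factor. Write $u:=y-x>0$, $v:=z-y>0$, so $z-x=u+v$, $\lambda=u/(u+v)$ and $1-\lambda=v/(u+v)$. Since $K_n$ is odd in its two arguments ($K_n(a,b)=-K_n(b,a)$), for $x<y<z$ we have
\[
K_n(x,y)=-\frac{\alpha^n}{u},\qquad K_n(x,z)=-\frac{\gamma^n}{u+v},\qquad K_n(y,z)=-\frac{\beta^n}{v}.
\]
Substituting into the definition of $S_n$ gives
\[
S_n(x,y,z)=\frac{\alpha^n\gamma^n}{u(u+v)}-\frac{\alpha^n\beta^n}{uv}+\frac{\gamma^n\beta^n}{v(u+v)}
=\frac{v\,\alpha^n\gamma^n-(u+v)\,\alpha^n\beta^n+u\,\beta^n\gamma^n}{uv(u+v)}.
\]
The numerator is $u(\beta^n\gamma^n-\alpha^n\beta^n)+v(\alpha^n\gamma^n-\alpha^n\beta^n)=u\beta^n(\gamma^n-\alpha^n)+v\alpha^n(\gamma^n-\beta^n)$.

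Next I would use the relation $\gamma=\lambda\alpha+(1-\lambda)\beta$, i.e.\ $(u+v)\gamma=u\alpha+v\beta$. From it,
\[
\gamma-\alpha=\frac{v(\beta-\alpha)}{u+v},\qquad \gamma-\beta=\frac{u(\alpha-\beta)}{u+v}.
\]
Factoring $\gamma^n-\alpha^n=(\gamma-\alpha)P(\gamma,\alpha)$ and $\gamma^n-\beta^n=(\gamma-\beta)P(\gamma,\beta)$, with $P(s,t)=\sum_{m=0}^{n-1}s^{n-1-m}t^m$, the numerator becomes
\[
\frac{uv(\alpha-\beta)}{u+v}\bigl[\alpha^nP(\gamma,\beta)-\beta^nP(\gamma,\alpha)\bigr].
\]
Hence
\[
S_n=\frac{\alpha-\beta}{(u+v)^2}\bigl[\alpha^nP(\gamma,\beta)-\beta^nP(\gamma,\alpha)\bigr].
\]

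The remaining and main step is the polynomial identity
\[
\alpha^nP(\gamma,\beta)-\beta^nP(\gamma,\alpha)=(\alpha-\beta)W_n(\alpha,\beta,\gamma).
\]
Expand by powers of $\gamma$: the coefficient of $\gamma^{n-1-k}$ on the left is $\alpha^n\beta^k-\beta^n\alpha^k=\alpha^k\beta^k(\alpha^{n-k}-\beta^{n-k})$. This equals $(\alpha-\beta)\alpha^k\beta^k\sum_{j=0}^{n-k-1}\alpha^{n-k-1-j}\beta^j$, and after re-indexing it is $(\alpha-\beta)\sum_{j=0}^{n-k-1}\alpha^{n-1-j}\beta^{k+j}$, which matches the first expression for $W_n$. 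The second expression follows by collecting monomials $\alpha^a\beta^b\gamma^l$ with $l=n-1-k$, $a=n-1-j$, $b=k+j$. Then $a+b+l=2n-2$, and $a,b,l\le n-1$. Conversely, any triple $(a,b,l)$ with $a+b+l=2n-2$ and $a,b,l\le n-1$ arises from exactly one admissible $(j,k)$, namely $k=n-1-l$, $j=n-1-a$, and the constraint $j\le n-k-1$ is equivalent to $b\le n-1$.

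The only delicate point is sign bookkeeping in the oddness step and in the expressions for $\gamma-\alpha$ and $\gamma-\beta$. As a sanity check I would verify the case $n=1$: there $W_1=1$ and $S_1=(\alpha-\beta)^2/(z-x)^2$, which is Melnikov's curvature identity.
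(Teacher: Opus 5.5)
Your proof is correct and follows essentially the same route as the paper: you write $S_n$ in terms of $\alpha,\beta,\gamma$ and the gaps (equivalently $\lambda$), use $\gamma=\lambda\alpha+(1-\lambda)\beta$ to extract one factor $\alpha-\beta$, and then expand the geometric sums to extract the second factor. Two small differences work in your favour. Factoring $\gamma^n-\alpha^n=(\gamma-\alpha)P(\gamma,\alpha)$, rather than dividing by $\gamma-\beta$ and $\alpha-\gamma$, removes the need for the paper's separate case $\alpha=\beta$. Your bijection $(j,k)\leftrightarrow(a,b,l)$ also verifies the second closed form of $W_n$, which the paper states without checking.
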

			\begin{proof}
				A direct computation shows that
				$$
				S_n(x,y,z)
				=
				\frac{P_n(\alpha,\beta,\lambda)}{(z-x)^2},
				$$
				where
				$$
				P_n(\alpha,\beta,\lambda)
				=
				\frac{\alpha^n\gamma^n}{\lambda}
				-
				\frac{\alpha^n\beta^n}{\lambda(1-\lambda)}
				+
				\frac{\beta^n\gamma^n}{1-\lambda}.
				$$
				If $\alpha=\beta$, then $\gamma=\alpha=\beta$ and the identity is immediate. Thus we may assume $\alpha\neq\beta$. Since $0<\lambda<1$, $\gamma\neq\alpha,\beta$. Using
				$
				\gamma=\lambda\alpha+(1-\lambda)\beta,
				$
				we have
				$$
				\lambda=\frac{\gamma-\beta}{\alpha-\beta},
				\qquad
				1-\lambda=\frac{\alpha-\gamma}{\alpha-\beta}.
				$$
				Hence
				$$
				\begin{aligned}
					P_n(\alpha,\beta,\lambda)
					&=
					\frac{\alpha^n\gamma^n(\alpha-\beta)}{\gamma-\beta}
					-
					\frac{\alpha^n\beta^n(\alpha-\beta)^2}{(\gamma-\beta)(\alpha-\gamma)}
					+
					\frac{\beta^n\gamma^n(\alpha-\beta)}{\alpha-\gamma} \\
					&=
					(\alpha-\beta)
					\left[
					\alpha^n\frac{\gamma^n-\beta^n}{\gamma-\beta}
					-
					\beta^n\frac{\alpha^n-\gamma^n}{\alpha-\gamma}
					\right] \\
					&=
					(\alpha-\beta)
					\left[
					\alpha^n\sum_{k=0}^{n-1}\gamma^{n-1-k}\beta^k
					-
					\beta^n\sum_{k=0}^{n-1}\alpha^{n-1-k}\gamma^k
					\right] \\
					&=
					(\alpha-\beta)
					\sum_{k=0}^{n-1}
					\gamma^{n-1-k}
					\left(
					\alpha^n\beta^k-\beta^n\alpha^k
					\right),
				\end{aligned}
				$$
				where the third equality follows from
				$
				\frac{u^n-v^n}{u-v}
				=
				\sum_{k=0}^{n-1}u^{n-1-k}v^k,
				$
				and the fourth one from reindexing the second sum. Finally,
				$$
				\alpha^n\beta^k-\beta^n\alpha^k
				=
				\alpha^k\beta^k(\alpha^{n-k}-\beta^{n-k})
				=
				(\alpha-\beta)
				\sum_{j=0}^{n-k-1}
				\alpha^{n-1-j}\beta^{k+j}.
				$$
				Therefore
				$$
				P_n(\alpha,\beta,\lambda)
				=
				(\alpha-\beta)^2
				\sum_{k=0}^{n-1}
				\gamma^{n-1-k}
				\sum_{j=0}^{n-k-1}
				\alpha^{n-1-j}\beta^{k+j}.
				$$
				This proves the desired identity.
			\end{proof}

			By the previous lemma,
			$$
			\iiint_{S_\ve} K_n(x,y)K_n(x,z)\,d z\,d y\,d x
			=
			2
			\iiint_{\Sigma_\ve}
			\left(
			\frac{\alpha-\beta}{z-x}
			\right)^2
			W_n(\alpha,\beta,\gamma)
			\,d x\,d y\,d z.
			$$
			
			Since $|\alpha|,|\beta|,|\gamma|\le 1$, each monomial in $W_n$ has absolute value at most $1$, and the number of terms is
			$$
			\sum_{k=0}^{n-1}(n-k)=\frac{n(n+1)}2\le n^2, \qquad \text{so that} \qquad |W_n(\alpha,\beta,\gamma)|\le n^2.
			$$
			Therefore
			$$
			\left|
			\iiint_{\Sigma_\ve}
			\left(
			\frac{\alpha-\beta}{z-x}
			\right)^2
			W_n(\alpha,\beta,\gamma)
			\,d x\,d y\,d z
			\right|
			\le
			n^2\iiint_{\Sigma_\ve}
			\left(
			\frac{\alpha-\beta}{z-x}
			\right)^2
			\,d x\,d y\,d z.
			$$
			
			From the definitions of $\alpha$ and $\beta$, we have
			$$
			\left(
			\frac{\alpha-\beta}{z-x}
			\right)^2
			=
			\left(
			\frac{
				\frac{A(y)-A(x)}{y-x}
				-
				\frac{A(z)-A(y)}{z-y}
			}{z-x}
			\right)^2.
			$$
			Since the integrand is nonnegative, we may estimate the previous quantity by extending the domain of integration to $\R^3$, and then apply the following identity due to Melnikov and Verdera (see \cite{MV} or \cite{V}).
			
			\begin{lem}\label{lem:triple}
				If $B:\R\to\R$ is Lipschitz and compactly supported, then
				$$
				\iiint_{\R^3}
				\left(
				\frac{
					\frac{B(y)-B(x)}{y-x}
					-
					\frac{B(z)-B(y)}{z-y}
				}{z-x}
				\right)^2
				d x\,d y\,d z
				=
				2\pi^2\int_{\R}|B'(t)|^2\,d t.
				$$
			\end{lem}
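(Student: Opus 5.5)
The plan is to prove the Melnikov–Verdera identity of Lemma~\ref{lem:triple} by Fourier analysis, writing each difference quotient as an average of $B'$. We have
\[
\frac{B(y)-B(x)}{y-x}=\int_0^1 B'(x+s(y-x))\,ds .
\]
Substitute $y=x+u$ and $z=x+u+v$, so that $y-x=u$, $z-y=v$ and $z-x=u+v$; the Jacobian is $1$. The integrand then becomes
\[
\frac{1}{(u+v)^2}\left(\int_0^1 \big[B'(x+su)-B'(x+u+sv)\big]\,ds\right)^2 .
\]
For fixed $(u,v)$, apply Plancherel in $x$. Writing $\widehat{B'}(\xi)$, the $x$-integral equals
\[
\frac{1}{2\pi}\int |\widehat{B'}(\xi)|^2\,\big|m(u\xi)-e^{iu\xi}m(v\xi)\big|^2\,d\xi ,
\]
with $m(t)=\int_0^1 e^{ist}\,ds=\frac{e^{it}-1}{it}$. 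Simplifying gives
\[
m(u\xi)-e^{iu\xi}m(v\xi)=\frac{e^{iu\xi}-1}{iu\xi}-\frac{e^{i(u+v)\xi}-e^{iu\xi}}{iv\xi}.
\]
By Fubini and Tonelli, since everything is nonnegative, the left side of the lemma equals
\[
\frac{1}{2\pi}\int |\widehat{B'}(\xi)|^2\, J(\xi)\,d\xi,
\qquad
J(\xi)=\iint_{\R^2}\frac{|m(u\xi)-e^{iu\xi}m(v\xi)|^2}{(u+v)^2}\,du\,dv .
\]

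Next I compute $J$ by scaling. Substituting $u\mapsto u/|\xi|$ and $v\mapsto v/|\xi|$, the measure $du\,dv$ contributes $|\xi|^{-2}$ and the factor $(u+v)^{-2}$ contributes $|\xi|^{2}$. Hence $J(\xi)=J(\pm1)$, and by conjugation symmetry $J(1)=J(-1)$. So $J$ is a constant $c$, and the left side equals $\frac{c}{2\pi}\|\widehat{B'}\|_2^2=c\|B'\|_2^2$. The lemma therefore reduces to two facts: $c$ is finite, and $c=2\pi^2$. As a sanity check, and for convergence, note that the quotient is bounded and the difference vanishes where $u+v\to 0$. This holds because $m(t)-e^{it}m(-t)=0$, as one checks directly, so near the line $u+v=0$ the numerator is $O((u+v)^2)$, uniformly for bounded $u$. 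At infinity, the numerator is $O(\min(1,|u|^{-1})+\min(1,|v|^{-1}))^2$ while $(u+v)^{-2}$ gives decay. The delicate regions are large $|u|$ with $u+v$ bounded, where the vanishing on the antidiagonal must be used quantitatively. There the derivative of the numerator in the $u+v$ direction is $O(1/|u|)$, which gives integrability.

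The main obstacle is evaluating the constant $c=2\pi^2$ exactly. I would first try to avoid computing $J$ directly by testing the identity on a convenient $B$. Alternatively, I would write the numerator as
\[
\frac{e^{iu}-1}{iu}-\frac{e^{i(u+v)}-e^{iu}}{iv}
=\frac{v(e^{iu}-1)-u(e^{i(u+v)}-e^{iu})}{iuv}
\]
and pass to the coordinates $w=u+v$ and $u$. Then the $u$-integral for fixed $w$ can be done by residues or standard integrals such as $\int \frac{1-\cos t}{t^2}\,dt=\pi$, and the final $w$-integral should also produce a factor of $\pi$, giving $\pi\cdot\pi\cdot 2$. If the direct evaluation becomes messy, a safer route is the one in Melnikov–Verdera. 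There one interprets the expression via the identity $\frac{\alpha-\beta}{z-x}$ as a second divided difference $2B[x,y,z]$. One then uses the known formula
\[
\iiint |B[x,y,z]|^2 = \frac{\pi^2}{2}\int|B'|^2,
\]
obtained by the same Plancherel scaling and a single one-dimensional integral evaluation. Since the paper cites \cite{MV}, I would ultimately accept the constant from there if the contour computation resists.
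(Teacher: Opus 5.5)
Your reduction is correct. The substitution $y=x+u$, $z=x+u+v$, Plancherel in $x$, Tonelli, and the dilation invariance of $J$ give $\iiint D^2=c\,\|B'\|_2^2$ with $c=J(1)$, and your sketch that $c<\infty$ is adequate. That already covers how the paper uses the lemma, which is only as an upper bound with an absolute constant; the paper itself gives no proof and simply cites Melnikov and Verdera. But the statement asserts $c=2\pi^2$, and that is exactly the step you leave open. The residue computation is only described (``should also produce a factor of $\pi$''), and the fallback does not fill the gap. With the standard second divided difference $B[x,y,z]=\frac{B[y,z]-B[x,y]}{z-x}$ one has $\frac{\alpha-\beta}{z-x}=-B[x,y,z]$, not $2B[x,y,z]$. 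The correct formula is then $\iiint B[x,y,z]^2=2\pi^2\int|B'|^2$, not $\frac{\pi^2}{2}\int|B'|^2$. Your version is consistent only if $B[x,y,z]$ means half the usual divided difference. Either way it is the lemma itself restated, so invoking it amounts to citing the result rather than proving it.

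The missing computation is short. With $E(t)=e^{it}$ one has $E[0,u]=i\,m(u)$ and $E[u,u+v]=ie^{iu}m(v)$, so the integrand of $J(1)$ is exactly $|E[0,u,u+v]|^2$. By the Hermite--Genocchi formula,
\[
E[0,b,w]=\int_T E''(s_1b+s_2w)\,ds_1\,ds_2=-\int_T e^{i(s_1b+s_2w)}\,ds_1\,ds_2,\qquad T=\{s_1,s_2\ge 0,\ s_1+s_2\le 1\},
\]
so $E[0,b,w]$ is, up to sign and reflection, the Fourier transform of $\chi_T$. Pass to $(b,w)=(u,u+v)$, which has unit Jacobian. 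Plancherel in $\R^2$ then gives $c=(2\pi)^2|T|=2\pi^2$. This proves $c<\infty$ at the same time, so the asymptotic analysis near the antidiagonal becomes unnecessary. If you prefer your iterated route: for fixed $w$, Plancherel in $b$ gives $\int_{\R}|E[0,b,w]|^2\,db=2\pi\int_0^1\frac{2-2\cos(sw)}{w^2}\,ds$. Since $\int_{\R}\frac{1-\cos t}{t^2}\,dt=\pi$, one has $\int_{\R}\frac{2-2\cos(sw)}{w^2}\,dw=2\pi s$, whence $c=2\pi\int_0^1 2\pi s\,ds=2\pi^2$.
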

			
			Set
			$
			\widetilde A(t)
			:=
			\chi_I(t)\bigl(A(t)-A(a)-\langle A'\rangle_I(t-a)\bigr),
			$
			where
			$\displaystyle
			\langle A'\rangle_I:=\frac{A(b)-A(a)}{b-a}.
			$
			
			By construction, $\widetilde A$ is Lipschitz and compactly supported in $I=[a,b]$. Applying Lemma~\ref{lem:triple} with $B=\widetilde A$, we obtain
			$$
			\iiint_{\R^3}
			\left(
			\frac{
				\frac{\widetilde A(y)-\widetilde A(x)}{y-x}
				-
				\frac{\widetilde A(z)-\widetilde A(y)}{z-y}
			}{z-x}
			\right)^2
			d x\,d y\,d z
			=
			2\pi^2\int_I |A'(t)-\langle A'\rangle_I|^2\,d t.
			$$
			
			Consequently,
			$$
			\left|
			\iiint_{S_\ve} K_n(x,y)K_n(x,z)\,d z\,d y\,d x
			\right|
			\le
			C\,n^2
			\int_I |A'(t)-\langle A'\rangle_I|^2\,d t.
			$$
			Using \eqref{eq:unweighted-testing-condition}, together with the above estimate for the symmetric term and the bound of Lemma~\ref{lem:error} for the error term, we conclude that
			
			$$
			\frac{\|T_{A,n}\chi_I\|_{L^2(I)}^2}{|I|}
			\le
			C
			\left[
			1+
			n^2
			\left(
			\frac1{|I|}
			\int_I |A'(t)-\langle A'\rangle_I|^2\,d t
			\right)
			\right].
			$$
			
			Taking the supremum over all intervals $I$ and applying the $T1$ theorem for antisymmetric Calderón--Zygmund kernels, we obtain
			\begin{align}
				\label{eq_aux3.3}
				\|T_{A,n}\|_{L^2\to L^2}
				&\leq
				C_{\mathrm{CZ}}
				+
				C
				\sqrt{
					1+
					n^2\,
					\sup_I
					\left(
					\frac1{|I|}
					\int_I |A'(t)-\langle A'\rangle_I|^2\,d t
					\right)
				},
			\end{align}
			where $C_{\mathrm{CZ}}$ denotes the Calderón--Zygmund constant of $K_n$. A standard computation on the kernel derivative shows that $C_{\mathrm{CZ}}\lesssim n$. Using that $A$ is 1-Lipschitz we finally get
			$$
			\|T_{A,n}\|_{L^2\to L^2}
			\le
			Cn.
			$$
			and the proof of Theorem~\ref{main} is complete.
			
			We note that the estimate $C_{\mathrm{CZ}}\lesssim n$ is sufficient for Theorem \ref{main}, but is far from optimal. Indeed, consider the affine function $A(x)=x+b$, with $b\in\mathbb{R}$ fixed. In this case, the operator $T_{A,n}$ becomes the Hilbert transform $H$. Since $H$ is an isometry in $L^2(\R)$ we simply have $\|T_{A,n}\|_{L^2\to L^2}=1=\|A'\|_\infty^n$, with no dependence on $n$.
			
			On the other hand, if one followed the indirect approach based on the estimate obtained above, then, since the integral in \eqref{eq_aux3.3} vanishes in the affine case, the only dependence on $n$ would come from the pointwise Calderón-Zygmund constants. This would yield a linear dependence on $n$ for the estimate of $\|T_{A,n}\|_{L^2\to L^2}$.
			
			This already shows that, if one aims to optimize this constant with respect to $n$, a direct application of a $T1$ theorem is not the most efficient approach. In the next section, we therefore obtain sharper bounds for the relevant Calderón-Zygmund constants by means of integral-type estimates. These refined estimates will be essential for the proof of the remaining results.

			\section{The Hörmander condition and interpolation}
			\label{secciointerpolacio}
			This section is devoted to establishing several lemmas, which will be used in the proofs of Theorem \ref{2}, Lemma \ref{3}, Corollary \ref{4} and Theorem \ref{5}. The purpose is to bypass the use of the $T(1)$ theorem and ensure that the Calderón--Zygmund constants in the $L^2$ estimates grow at most like $\log{n}$.
			
			We begin by proving that, for odd $n$, $\displaystyle W_n(\alpha,\beta,\gamma)\ge 0.$
			This stronger positivity property was not needed in the argument above, since the proof only uses the combinatorial estimate
			$$
			|W_n(\alpha,\beta,\gamma)|\le n^2.
			$$ However, it will be essential in the sequel to find better estimates for the Calderón--Zygmund constants of $K_n$. Moreover, due to this positivity issue, 
			the quantity
			$$
			\left(\frac{\alpha-\beta}{z-x}\right)^2 W_n(\alpha,\beta,\gamma)
			$$
			may be viewed as a weighted version of the Menger curvature, which plays a central role in the work of Melnikov and Verdera on the \(L^2\)-boundedness of the Cauchy integral on Lipschitz curves (see \cite{MV,V}). For this reason, we believe that this positivity phenomenon deserves to be highlighted. Finally, although it only appears in the odd case, this does not constitute a genuine obstruction in view of the reduction formula in \cite[p.\,50]{D3},
			$$
			T_{A,n+1}(1)=T_{A,n}(A'),
			$$
			which allows one to pass from estimates for odd commutators to estimates for even commutators.
			
			\begin{lem}
				Assume that $n$ is odd. Then
				$
				W_n(\alpha,\beta,\gamma)\ge 0,
				$
				%	where $\gamma=\lambda\alpha+(1-\lambda)\beta$, with $0<\lambda<1$.
			\end{lem}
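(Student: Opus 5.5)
The plan is to avoid the double sum defining $W_n$ and instead use the closed form that appears in the proof of Lemma~\ref{Wn}. Write $a:=\alpha^n$, $b:=\beta^n$, $c:=\gamma^n$, where $\gamma=\lambda\alpha+(1-\lambda)\beta$ and $0<\lambda<1$.

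\textbf{Reduction.} The proof of Lemma~\ref{Wn} gives
$$(\alpha-\beta)^2W_n=\frac{ac}{\lambda}-\frac{ab}{\lambda(1-\lambda)}+\frac{bc}{1-\lambda}.$$
Multiplying by $\lambda(1-\lambda)>0$ yields
$$\lambda(1-\lambda)(\alpha-\beta)^2W_n(\alpha,\beta,\gamma)=c\bigl((1-\lambda)a+\lambda b\bigr)-ab.$$
If $\alpha=\beta$, then $\gamma=\alpha$ and $W_n=\tfrac{n(n+1)}{2}\alpha^{2n-2}\ge 0$. So it suffices to prove
$$c\bigl((1-\lambda)a+\lambda b\bigr)\ge ab\qquad\text{whenever }\alpha\neq\beta.$$
Two symmetries cut down the cases.
\begin{itemize}
\item $W_n$ is homogeneous of even degree $2n-2$, so it is invariant under $(\alpha,\beta,\gamma)\mapsto(-\alpha,-\beta,-\gamma)$.
\item It is symmetric in $(\alpha,\beta)$, and swapping $\alpha\leftrightarrow\beta$ corresponds to $\lambda\leftrightarrow 1-\lambda$.
\end{itemize}
Since $n$ is odd, $t\mapsto t^n$ is increasing and preserves signs. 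Finally, $W_n$ is continuous, so boundary cases where some of $\alpha,\beta,\gamma$ vanish follow by continuity, or by direct inspection.

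\textbf{Same sign.} By the sign symmetry we may take $\alpha,\beta>0$, so $a,b,c>0$. Dividing by $ab\,c$, the inequality becomes
$$\frac{1}{\gamma^n}\le \frac{\lambda}{\alpha^n}+\frac{1-\lambda}{\beta^n},\qquad\text{that is,}\qquad \gamma\ge\Bigl(\lambda\alpha^{-n}+(1-\lambda)\beta^{-n}\Bigr)^{-1/n}.$$
The right-hand side is the weighted power mean $M_{-n}$ of $\alpha,\beta$ with weights $\lambda,1-\lambda$. The left-hand side is $M_1$, their weighted arithmetic mean. The inequality is therefore the power-mean inequality $M_{-n}\le M_1$, or equivalently convexity of $t\mapsto t^{-n}$ on $(0,\infty)$ via Jensen.

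\textbf{Opposite signs.} By the swap symmetry we may take $\alpha>0>\beta$, so $ab<0$. By the sign symmetry it suffices to treat $\gamma\ge 0$, since $\gamma<0$ reduces to this after changing all signs and swapping roles. If $\gamma=0$, the left side is $0>ab$. So let $0<\gamma<\alpha$.
\begin{itemize}
\item If $(1-\lambda)a+\lambda b\ge 0$, the left side is nonnegative and exceeds $ab$.
\item Otherwise both sides are negative, and we need $c\,\bigl|(1-\lambda)a+\lambda b\bigr|\le a|b|$. This holds because $0<c\le a$ (monotonicity of $t^n$ with $0<\gamma<\alpha$) and
$$\bigl|(1-\lambda)a+\lambda b\bigr|=\lambda|b|-(1-\lambda)a\le |b|.$$
\end{itemize}

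The only genuinely non-elementary point is recognising the same-sign case as a power-mean (harmonic versus arithmetic) inequality. I expect this to be the main step, and everything else is careful sign bookkeeping, including checking that the two symmetries really reduce all configurations to the cases above.
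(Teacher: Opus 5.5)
Your proof is correct, but it takes a different route from the paper's.

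\textbf{Your route.} You work throughout with the single $\lambda$-form
$\lambda(1-\lambda)(\alpha-\beta)^2W_n=\gamma^n\bigl((1-\lambda)\alpha^n+\lambda\beta^n\bigr)-\alpha^n\beta^n$.
You settle the same-sign case by Jensen for $t\mapsto t^{-n}$. You settle the opposite-sign case by a sign/size comparison, after using the symmetries to reduce to $\alpha>0>\beta$ and $\gamma\ge 0$.

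\textbf{The paper's route.} The paper splits on $\alpha\beta\ge 0$ versus $\alpha\beta<0$ and uses two other representations of $W_n$.

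When $\alpha\beta\ge 0$, the numbers $\alpha,\beta,\gamma$ share a sign. Every monomial $\alpha^j\beta^k\gamma^l$ in $W_n$ has even degree $2n-2$, so each term is nonnegative. This makes the step you single out as the main one, the power-mean inequality, unnecessary. It is precisely the double-sum representation you chose to avoid that makes this case a one-liner.

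When $\alpha\beta<0$, the paper writes $(\alpha-\beta)^2W_n=(\alpha-\beta)\bigl[\alpha^n\phi(\beta)-\beta^n\phi(\alpha)\bigr]$. Here $\phi(t)=(t^n-\gamma^n)/(t-\gamma)$ is the divided difference of $t\mapsto t^n$ at $\gamma$, and it is $\ge 0$ because $n$ is odd. The sign can then be read off directly. This needs no reduction to $\gamma\ge 0$ and no subcase on the sign of $(1-\lambda)\alpha^n+\lambda\beta^n$. Since the factorization is a polynomial identity, it even gives $W_n\ge 0$ for every real $\gamma$ when $\alpha\beta<0$. Your argument, by contrast, genuinely uses $0<\gamma<\alpha$, i.e.\ the convex-combination structure.

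\textbf{Comparison.} Your version handles one expression uniformly, and it exposes the convexity mechanism that the paper itself exploits later in Lemma~\ref{pos}. The cost is more symmetry bookkeeping, which you carry out correctly.
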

			\begin{proof}
				We distinguish two cases. If $\alpha\beta\ge 0$, then $\alpha,\beta$
				and $\gamma$ have the same sign, possibly allowing one of them to be zero.
				Each monomial in $W_n$ has total degree
				$$
				(n-1-k)+(n-1-j)+(k+j)=2n-2,
				$$
				which is even. Hence every term in the definition of $W_n$ is nonnegative,
				and therefore $W_n(\alpha,\beta,\gamma)\ge 0$.
				
				Assume now that $\alpha\beta<0$. Recall that
				$$
				(\alpha-\beta)^2W_n(\alpha,\beta,\gamma)
				=
				(\alpha-\beta)
				\left[
				\alpha^n\frac{\beta^n-\gamma^n}{\beta-\gamma}
				-
				\beta^n\frac{\alpha^n-\gamma^n}{\alpha-\gamma}
				\right].
				$$
				For fixed $\gamma$, define
				$$
				\phi(t):=\frac{t^n-\gamma^n}{t-\gamma}, \quad t\neq \gamma,
				$$
				and also set $\phi(\gamma):=n\gamma^{n-1}$. Since $n$ is odd, the function
				$t\mapsto t^n$ is increasing on $\mathbb R$, and hence $\phi(t)\ge 0$ for
				all $t\in\mathbb R$.
				
				If $\alpha>0>\beta$, then $\alpha-\beta>0$, $\alpha^n>0$ and $\beta^n<0$.
				Thus
				$
				\alpha^n\phi(\beta)-\beta^n\phi(\alpha)\ge 0,
				$
				and consequently
				$
				(\alpha-\beta)^2W_n(\alpha,\beta,\gamma)\ge0.
				$
				
				If $\beta>0>\alpha$, then $\alpha-\beta<0$, $\alpha^n<0$ and $\beta^n>0$.
				Thus
				$
				\alpha^n\phi(\beta)-\beta^n\phi(\alpha)\le 0,
				$
				and again
				$
				(\alpha-\beta)^2W_n(\alpha,\beta,\gamma)\ge0.
				$
				
				Since $\alpha\ne\beta$ in the case $\alpha\beta<0$, we conclude that
				$W_n(\alpha,\beta,\gamma)\ge0$.
			\end{proof}
			Next we prove a Hörmander condition for the kernels $K_n$, with constants growing at most logarithmically in $n$. Recall that we are assuming that $A$ is $1$-Lipschitz, i.e. $\|A'\|_\infty = 1$.
			\begin{lem}\label{hormander}
				There exists an absolute constant $C>0$ such that, for all $y,z\in\mathbb R$,
				$$
				\int_{|x-y|>2|y-z|}
				|K_n(x,y)-K_n(x,z)|\,dx
				\le
				C(1+\log n).
				$$
			\end{lem}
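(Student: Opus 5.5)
Set $r:=|y-z|$ and use only $\|A'\|_\infty=1$. Write $K_n(x,y)=\frac{1}{x-y}\,a^n$ and $K_n(x,z)=\frac{1}{x-z}\,b^n$, where
$$
a:=\frac{A(x)-A(y)}{x-y},\qquad b:=\frac{A(x)-A(z)}{x-z},\qquad |a|,|b|\le 1.
$$
Then split the difference as
$$
K_n(x,y)-K_n(x,z)=a^n\Big(\frac{1}{x-y}-\frac{1}{x-z}\Big)+\frac{a^n-b^n}{x-z}.
$$
The first term is bounded by $|y-z|/|x-y||x-z|\lesssim r/|x-y|^2$. Its integral over $|x-y|>2r$ is therefore $O(1)$, independent of $n$. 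All the difficulty lies in the second term. We have $|a^n-b^n|\le \min(2,\,n|a-b|)$, and this is the point where the pointwise approach loses: bounding by $n|a-b|$ everywhere gives a factor of $n$. The plan is to use the minimum with $2$ instead.

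Next, estimate $|a-b|$ for $|x-y|>2r$. A direct computation gives
$$
a-b=\frac{(A(x)-A(y))(x-z)-(A(x)-A(z))(x-y)}{(x-y)(x-z)}=\frac{(A(z)-A(y))(x-y)+(A(x)-A(y))(y-z)}{(x-y)(x-z)}.
$$
Each numerator term is at most $r|x-y|$, and $|x-z|\simeq|x-y|$, so $|a-b|\lesssim r/|x-y|$. Hence, with $t=|x-y|$,
$$
\Big|\frac{a^n-b^n}{x-z}\Big|\lesssim \frac{1}{t}\min\Big(1,\frac{nr}{t}\Big).
$$
Now integrate over $t>2r$. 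On $2r<t<nr$ use the bound $1/t$, which contributes $\int_{2r}^{nr}dt/t\lesssim\log n$ on each side of $y$. On $t>nr$ use $nr/t^2$, which contributes $nr\int_{nr}^\infty dt/t^2=1$. Summing gives $C(1+\log n)$. For $n=1,2$ the range $2r<t<nr$ is empty or trivial, and the bound is $O(1)$.

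The main (and only) delicate step is noticing the truncation $\min(2,n|a-b|)$. Beyond that, the key is the estimate $|a-b|\lesssim r/|x-y|$, which needs only the Lipschitz property and not differentiability. If $A$ is merely Lipschitz this still holds everywhere, since no derivative of $A$ is used. All constants are absolute, so the Hörmander constant is $C(1+\log n)$ as claimed.
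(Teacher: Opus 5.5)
Your proposal is correct and follows essentially the same argument as the paper. The paper uses the same splitting of $K_n(x,y)-K_n(x,z)$, the same truncation $|a^n-b^n|\le\min\{2,n|a-b|\}$, the same bound $|a-b|\lesssim |y-z|/|x-z|$ from the Lipschitz property alone, and the same logarithmic integration; the only difference is cosmetic, namely that you compute $a-b$ over a common denominator where the paper splits it as $(A(x)-A(z))\bigl(\frac{1}{x-y}-\frac{1}{x-z}\bigr)+\frac{A(z)-A(y)}{x-y}$.
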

			\begin{proof}
				Define	
				\begin{equation*}
					Q_y(x):=\frac{A(x)-A(y)}{x-y}, \quad \text{so that} \quad  K_n(x,y)=\frac{Q_y(x)^n}{x-y}.
				\end{equation*}
				Since \(A\) is Lipschitz with constant 1, $\displaystyle	|Q_y(x)|\le 1$.
				Let \(h:=|y-z|\). If \(h=0\), the integral is identically 0, so assume \(h>0\). We estimate
				\begin{equation*}
					I(y,z):= \int_{|x-y|>2h} |K_n(x,y)-K_n(x,z)|d x .
				\end{equation*}
				We decompose
				\begin{equation*}
					K_n(x,y)-K_n(x,z) = Q_y(x)^n \left(\frac1{x-y}-\frac1{x-z}\right) + \frac{Q_y(x)^n-Q_z(x)^n}{x-z},
				\end{equation*}
				so that
				\begin{align*}
					I(y,z) &\le \int_{|x-y|>2h} |Q_y(x)|^n \left|\frac1{x-y}-\frac1{x-z}\right|d x + \int_{|x-y|>2h}
					\frac{|Q_y(x)^n-Q_z(x)^n|}{|x-z|}d x \\
					&=:J_1+J_2.
				\end{align*}	
				For \(J_1\), using \(|Q_y(x)|\le 1\), we get
				\begin{equation*}
					J_1 \le \int_{|x-y|>2h} \frac{h}{|x-y||x-z|}d x \leq C.
				\end{equation*}
				We now estimate \(J_2\). For \(|a|,|b|\le 1\), one has $|a^n-b^n|
				\le n|a-b|$ as well as $|a^n-b^n|\le 2$. Therefore,
				\begin{equation*}
					|a^n-b^n| \le \min\left\{ 2,n|a-b| \right\}.
				\end{equation*}
				Applying this with \(a=Q_y(x)\) and \(b=Q_z(x)\), it remains to bound $|Q_y(x)-Q_z(x)|$. We write
				\begin{equation*}
					Q_y(x)-Q_z(x) = (A(x)-A(z)) \left(\frac1{x-y}-\frac1{x-z}\right) + \frac{A(z)-A(y)}{x-y}.
				\end{equation*}
				Using the Lipschitz bound for $A$, we obtain
				\begin{equation*}
					|Q_y(x)-Q_z(x)| \le  \frac{2h}{|x-y|}.
				\end{equation*}
				On the region $|x-y|>2h$, we have $|x-z|\le |x-y|+h < \frac32 |x-y|$, and hence $\frac1{|x-y|} \le \frac32\frac1{|x-z|}$. Consequently,
				\begin{equation*}
					|Q_y(x)-Q_z(x)| \le \frac{3h}{|x-z|},
				\end{equation*}
				and then
				\begin{equation*}
					|Q_y(x)^n-Q_z(x)^n| \le \min\left\{2,\frac{3nh}{|x-z|}\right\}.
				\end{equation*}
				It follows that
				\begin{align*}
					J_2 &\le \int_{|x-y|>2h} \frac1{|x-z|} \min\left\{ 2,\frac{3nh}{|x-z|}\right\}d x \leq 2\int_h^\infty \frac1r \min\left\{2,\frac{3nh}{r}\right\}d r \\
					&\leq C \left( 1+\log n \right). 
				\end{align*}	
				Combining the estimates for \(J_1\) and \(J_2\), we obtain
				$$
				\int_{|x-y|>2|y-z|} |K_n(x,y)-K_n(x,z)|d x \le  C (1+\log{n}).
				$$
				This proves the lemma.
			\end{proof}
			
			\begin{lem}\label{propo}
				Assume that $n$ is odd and suppose that there exists a constant $\Lambda_n$ such that, for every interval $I\subset\R$,
				$$
				\|T_{A,n,\varepsilon}\chi_I\|_{L^2(I)}
				\le
				\Lambda_n |I|^{1/2}
				$$
				uniformly in $\varepsilon>0$. Then
				$$
				\|T_{A,n,\varepsilon}\|_{L^2 \to L^2}\le C(\Lambda_n + 1 + \log n)$$ uniformly in $\varepsilon>0$.
			\end{lem}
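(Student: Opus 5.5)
Our plan is to prove a quantitative local $T1$ theorem for the antisymmetric kernel $K_n$, in which the only kernel information entering the constant is the size bound $|K_n(x,y)|\le |x-y|^{-1}$ together with the Hörmander estimate of Lemma~\ref{hormander}. The pointwise smoothness constant, which grows like $n$, is not allowed to appear. Concretely, we aim to show
\[
\|T_{A,n,\ve}\|_{L^2\to L^2}\lesssim \Lambda_n + C_H, \qquad C_H:=\sup_{y,z}\int_{|x-y|>2|y-z|}|K_n(x,y)-K_n(x,z)|\,dx\lesssim 1+\log n .
\]
We also need the adjoint form of Hörmander's condition, i.e. integration in the second variable. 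Antisymmetry $K_n(x,y)=-K_n(y,x)$ gives it for free.

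\textbf{Step 1: the $T1$ and $T^*1$ data.} Since the kernel is antisymmetric, $T^*_{A,n,\ve}=-T_{A,n,\ve}$. The hypothesis therefore controls both testing conditions. From local testing we then derive $T_{A,n,\ve}1\in\mathrm{BMO}$ with norm $\lesssim \Lambda_n+C_H$, by the standard argument: on an interval $I$ write $1=\chi_{2I}+\chi_{(2I)^c}$. The first piece is controlled by testing on $2I$. For the second, the oscillation of $T\chi_{(2I)^c}$ over $I$ is bounded by the Hörmander constant after averaging in $y,z\in I$, since $\int_{(2I)^c}|K_n(x,y)-K_n(x,z)|\,dx\le C_H$. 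We will state the estimate in averaged form so that only the integral condition is used.

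\textbf{Step 2: $L^2$ bound with controlled constants.} Rather than invoking the $T1$ theorem as a black box, whose constant depends on pointwise Calderón--Zygmund constants, we would follow a proof whose smoothness input is only Hörmander-type. The intended route is as follows.

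1. Use the hypothesis $T1=T^*1\in\mathrm{BMO}$ with constant $M\lesssim\Lambda_n+C_H$. Subtract paraproducts $\Pi_{T1}$ and $\Pi_{T^*1}^*$, each with norm $\lesssim M$ and not depending on the kernel, so that the remainder $T_0$ satisfies $T_0 1=T_0^*1=0$ and has weak boundedness constant $\lesssim\Lambda_n$. Here the weak boundedness property follows from local testing by Cauchy--Schwarz: $|\langle T\chi_I,\chi_I\rangle|\le\Lambda_n|I|$.

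2. Bound $T_0$ via a Cotlar--Stein almost-orthogonality, or a dyadic Haar decomposition, of the matrix entries $\langle T_0 h_I,h_J\rangle$.

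The delicate point is that the off-diagonal decay normally comes from Hölder smoothness. With only the $L^1$-Hörmander condition one gets entries controlled by $C_H$ without geometric decay. So we would instead combine the estimate in a Schur-type test summed over scales, or alternatively interpolate. That is the approach suggested by the section title: prove $T_{A,n,\ve}:L^1\to L^{1,\infty}$ and $L^\infty\to\mathrm{BMO}$ with bounds $\lesssim \|T\|_{2\to2}+C_H$ (Calderón--Zygmund decomposition using only Hörmander's condition), and feed these into an a priori bootstrapping.

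\textbf{Main obstacle and fallback.} The main obstacle is Step 2, namely getting an $L^2$ bound that does not secretly use pointwise regularity. The plan we would try first is an a priori argument. Since $\ve>0$ and $A$ may be taken smooth and compactly supported, $N:=\|T_{A,n,\ve}\|_{2\to2}<\infty$. Apply the local $T1$ reasoning to $T\chi_I$ and to $T(f\chi_I)$ for bounded $f$, using the Hörmander estimates plus the weak-type/BMO endpoints. This should yield an inequality
\[
N\le C(\Lambda_n+1+\log n)+\tfrac12 N,
\]
where the $\tfrac12 N$ term arises from localized pieces whose contribution is a small multiple of $N$ after a good-$\lambda$ or Marcinkiewicz interpolation at exponents near $2$. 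Absorbing the last term gives the claim uniformly in $\ve$. Oddness of $n$ is used in the background via the positivity of $W_n$, which makes the testing quantity in \eqref{eq:unweighted-testing-condition} genuinely comparable to $\Lambda_n^2|I|$.
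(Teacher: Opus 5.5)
There is a genuine gap in Step 2, and the fallback does not close it. Step 1 only concerns $T_{A,n,\ve}\chi_I$, which gives you $T1\in\mathrm{BMO}$ and weak boundedness. Neither a $T1$-type argument using only the Hörmander condition nor the endpoint route $L^\infty\to\mathrm{BMO}$ can proceed without controlling $T_{A,n,\ve}(b\chi_I)$ on $I$ for an \emph{arbitrary} bounded $b$. In your bootstrap, that local piece (and likewise the good part of the Calderón--Zygmund decomposition for weak $(1,1)$) is bounded by $N=\|T_{A,n,\ve}\|_{2\to 2}$. So both endpoints come with constants of size $N+C_H$. Marcinkiewicz interpolation or a good-$\lambda$ inequality then returns an $L^2$ bound of that same size, never $C(\Lambda_n+1+\log n)+\tfrac12 N$. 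You give no mechanism for the absorbable term, and these tools do not supply one. The argument is circular exactly where the difficulty lies.

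The missing idea is the actual role of the oddness of $n$. Positivity is not needed to make the testing quantity comparable to $\Lambda_n^2|I|$; that is simply the hypothesis. Its role is to upgrade testing on $\chi_I$ to testing on every $b\chi_I$. Write $T=T_{A,n,\ve}$ and symmetrize over $S_\ve$. Up to $O(\|b\|_\infty^2|I|)$, you get
$2\int_I|T(b\chi_I)|^2+4\int_I T(b\chi_I)\,T\chi_I\,b=\tfrac23\iiint_{S_\ve}S_n\,\bigl(b(x)b(y)+b(x)b(z)+b(y)b(z)\bigr)$.
For odd $n$ one has $S_n=\bigl((\alpha-\beta)/(z-x)\bigr)^2W_n\ge 0$. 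Hence the right side is at most $3\|b\|_\infty^2\iiint_{S_\ve}S_n\lesssim\|b\|_\infty^2\bigl(\|T\chi_I\|_{L^2(I)}^2+|I|\bigr)\lesssim\Lambda_n^2\|b\|_\infty^2|I|$ by \eqref{eq:unweighted-testing-condition}. Without $S_n\ge0$ you would face $\iiint|S_n|$, which testing does not control. Next, apply Cauchy--Schwarz to the mixed term, using $\|T\chi_I\|_{L^2(I)}\le\Lambda_n|I|^{1/2}$, and solve the resulting quadratic inequality. This gives $\|T(b\chi_I)\|_{L^2(I)}\lesssim\Lambda_n\|b\|_\infty|I|^{1/2}$, with no reference to $N$. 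With this local bound in hand, Lemma \ref{hormander} handles the far part and yields $L^\infty\to\mathrm{BMO}$ with constant $\lesssim\Lambda_n+1+\log n$. Antisymmetry and duality then give $H^1\to L^1$, and Fefferman--Stein interpolation gives the $L^2$ bound. No $T1$ theorem, paraproducts or bootstrap are needed.
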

			
			\begin{proof}
				Let $b$ be a real function in $L^\infty(I)$. We first show that
				\begin{equation}\label{L2b}
					\|T_{A,n,\varepsilon}(b\chi_I)\|_{L^2(I)}\le
					C\Lambda_n\|b\|_\infty |I|^{1/2}.
				\end{equation} 
				This estimate will imply that $T_{A,n,\varepsilon}$ is bounded from $L^\infty$ to $\mathrm{BMO}$ with constant proportional to $\Lambda_n+1+\log{n}$, and from $H^1$ to $L^1$. Then, by interpolation we will obtain the result.
				
				Denote
				$$
				T_\varepsilon b:=T_{A,n,\varepsilon}(b\chi_I),
				\qquad
				T_\varepsilon 1:=T_{A,n,\varepsilon}(\chi_I).
				$$
				Observe that $K_n(y,x)=-K_n(x,y)$. We first establish the following weighted symmetrization identity:
				\begin{equation}\label{eq:weighted-symmetrization}
					\begin{aligned}
						2&\int_I |T_\varepsilon b(x)|^2\,dx
						+
						4\int_I T_\varepsilon b(x)\,T_\varepsilon 1(x)\,b(x)\,dx  \\
						&=
						\frac{2}{3}
						\iiint_{S_\varepsilon}
						S_n(x,y,z)
						\bigl(
						b(x)b(y)+b(x)b(z)+b(y)b(z)
						\bigr)
						\,dx\,dy\,dz 
						+
						O\bigl(\|b\|_\infty^2 |I|\bigr),
					\end{aligned}
				\end{equation}
				where
				$$
				S_n(x,y,z)
				=
				K_n(x,y)K_n(x,z)
				-
				K_n(x,y)K_n(y,z)
				+
				K_n(x,z)K_n(y,z).
				$$
				
				Indeed, writing $D_\varepsilon
				=
				\{(x,y,z)\in I^3:\ |x-y|>\varepsilon,\ |x-z|>\varepsilon\}$, we have by definition
				$$
				\int_I |T_\varepsilon b(x)|^2\,dx
				=
				\iiint_{D_\varepsilon}
				K_n(x,y)K_n(x,z)b(y)b(z)\,dx\,dy\,dz.
				$$
				We decompose $D_\varepsilon=S_\varepsilon\cup E_\varepsilon$, with
				$$
				S_\varepsilon
				=
				\{(x,y,z)\in I^3:\ |x-y|>\varepsilon,\ |x-z|>\varepsilon,\ |y-z|>\varepsilon\}.
				$$
				
				Using $|K_n(x,y)|\le \frac{1}{|x-y|}$ we obtain
				$$
				\left|
				\iiint_{E_\varepsilon}
				K_n(x,y)K_n(x,z)b(y)b(z)\,dx\,dy\,dz
				\right|
				\le
				\|b\|_\infty^2
				\iiint_{E_\varepsilon}
				\frac{dx\,dy\,dz}{|x-y||x-z|}.
				$$
				As in the unweighted case, the last integral is bounded by $C|I|$, and therefore
				$$
				\left|
				\iiint_{E_\varepsilon}
				K_n(x,y)K_n(x,z)b(y)b(z)\,dx\,dy\,dz
				\right|
				\le
				C\|b\|_\infty^2 |I|.
				$$
				
				Arguing in the same way, the contribution of the nonsymmetric region in the mixed term
				$$
				\int_I T_\varepsilon b(x)\,T_\varepsilon 1(x)\,b(x)\,dx
				$$
				is also bounded by $C\|b\|_\infty^2|I|$.
				
				We now restrict the integrals to $S_\varepsilon$ and exploit its symmetry under permutations of the variables. Define
				$$
				\mathcal{S}_b(x,y,z)
				=
				K_n(x,y)K_n(x,z)b(y)b(z)
				-
				K_n(x,y)K_n(y,z)b(x)b(z)
				+
				K_n(x,z)K_n(y,z)b(x)b(y).
				$$
				Then
				\begin{equation}\label{eq:square}
					\int_I |T_\varepsilon b(x)|^2\,dx
					=
					\frac13
					\iiint_{S_\varepsilon}
					\mathcal{S}_b(x,y,z)\,dx\,dy\,dz
					+
					O(\|b\|_\infty^2 |I|).
				\end{equation}
				
				Similarly, a symmetrization of the mixed term yields
				\begin{equation}\label{eq:mixed}
					\int_I T_\varepsilon b(x)\,T_\varepsilon 1(x)\,b(x)\,dx
					=
					\frac16
					\iiint_{S_\varepsilon}
					\mathcal{M}_b(x,y,z)\,dx\,dy\,dz
					+
					O(\|b\|_\infty^2 |I|),
				\end{equation}
				where
				$$
				\begin{aligned}
					\mathcal{M}_b(x,y,z)
					={}&
					K_n(x,y)K_n(x,z)b(x)b(y)
					+
					K_n(x,y)K_n(x,z)b(x)b(z)\\
					&-
					K_n(x,y)K_n(y,z)b(x)b(y)
					-
					K_n(x,y)K_n(y,z)b(y)b(z)\\
					&+
					K_n(x,z)K_n(y,z)b(x)b(z)
					+
					K_n(x,z)K_n(y,z)b(y)b(z).
				\end{aligned}
				$$
				
				We now combine \eqref{eq:square} and \eqref{eq:mixed}. Set
				$$
				A:=K_n(x,y)K_n(x,z),\quad
				B:=-K_n(x,y)K_n(y,z),\quad
				C:=K_n(x,z)K_n(y,z),
				$$
				and
				$$
				p:=b(x)b(y),\quad q:=b(x)b(z),\quad r:=b(y)b(z).
				$$
				Then
				$
				S_n(x,y,z)=A+B+C,
				$
				and the symmetrized expressions become
				$$
				\mathcal{S}_b = Ar + Bq + Cp,
				\qquad
				\mathcal{M}_b = A(p+q) + B(p+r) + C(q+r).
				$$
				
				Multiplying \eqref{eq:square} by $2$ and \eqref{eq:mixed} by $4$, %we obtain
				%$$
				%2(\cdots) + 4(\cdots)
				%=
				%\frac23
				%\Big[
				%Ar+Bq+Cp
				%+
				%A(p+q)+B(p+r)+C(q+r)
				%\Big].
				%$$
				and grouping terms appropriately yields
				$$
				\frac23
				\Big[
				A(p+q+r)+B(p+q+r)+C(p+q+r)
				\Big]
				=
				\frac23 (A+B+C)(p+q+r),
				$$
				which is exactly
				$$
				\frac23
				S_n(x,y,z)
				\bigl(
				b(x)b(y)+b(x)b(z)+b(y)b(z)
				\bigr),
				$$
				and proves \eqref{eq:weighted-symmetrization}.

				We now use the positivity available for odd $n$. Recall that
				$$
				S_n(x,y,z)
				= \left( \frac{\alpha-\beta}{z-x} \right)^2 W_n(\alpha,\beta,\gamma).
				$$
				Since $n$ is odd, $W_n(\alpha,\beta,\gamma)\ge 0$, and hence $S_n(x,y,z)\ge0$. Therefore,
				$$
				\iiint_{S_\varepsilon}
				S_n(x,y,z)
				\bigl(
				b(x)b(y)+b(x)b(z)+b(y)b(z)
				\bigr)
				\,dx\,dy\,dz
				\le
				3\|b\|_\infty^2
				\iiint_{S_\varepsilon}
				S_n(x,y,z)\,dx\,dy\,dz.
				$$
				
				Enlarging $\Lambda_n$ if necessary, we may assume that $\Lambda_n \ge 1$. Using the unweighted symmetrization identity \eqref{eq:unweighted-testing-condition} and the hypothesis on $\Lambda_n$, we obtain
				$$
				\iiint_{S_\varepsilon} S_n(x,y,z)\,dx\,dy\,dz
				\lesssim
				\|T_{A,n,\varepsilon}\chi_I\|_{L^2(I)}^2 + |I|
				\lesssim
				\Lambda_n^2 |I|.
				$$
				Plugging this estimate into \eqref{eq:weighted-symmetrization} and using Cauchy--Schwarz for the mixed term, we get
				$$
				\|T_\varepsilon b\|_{L^2(I)}^2
				\le
				C\Lambda_n^2 \|b\|_\infty^2 |I|
				+
				C\Lambda_n \|b\|_\infty |I|^{1/2}
				\|T_\varepsilon b\|_{L^2(I)}.
				$$
				Solving this quadratic inequality in $\|T_\varepsilon b\|_{L^2(I)}$ gives \eqref{L2b}. 
				
				By standard Calderón--Zygmund arguments, the above local estimate implies that 
				$T_{A,n,\varepsilon}$ maps $L^\infty(\mathbb{R})$ into $\mathrm{BMO}(\mathbb{R})$, with bound
				\begin{equation}\label{Lbmo}
					\|T_{A,n,\varepsilon}\|_{L^\infty \to \mathrm{BMO}}
					\le
					C(\Lambda_n + 1 + \log n).
				\end{equation}
				
				We briefly justify \eqref{Lbmo}. It is enough to consider real-valued functions. Let $b\in L^\infty(\mathbb R)$ and let $I$ be an interval. We write
				$$
				b=b\chi_{4I}+b\chi_{(4I)^c}.
				$$
				Let $x_I$ denote the center of $I$, and set
				$
				c_I:=T_{A,n,\varepsilon}(b\chi_{(4I)^c})(x_I).
				$
				Then
				$$
				\frac1{|I|}\int_I |T_{A,n,\varepsilon}b(x)-c_I|\,dx
				\le A_I+B_I,
				$$
				where
				$$
				A_I:=
				\frac1{|I|}\int_I |T_{A,n,\varepsilon}(b\chi_{4I})(x)|\,dx
				$$
				and
				$$
				B_I:=
				\frac1{|I|}\int_I
				\left|
				T_{A,n,\varepsilon}(b\chi_{(4I)^c})(x)
				-
				T_{A,n,\varepsilon}(b\chi_{(4I)^c})(x_I)
				\right|\,dx.
				$$
				
				For $A_I$, by Cauchy--Schwarz and the local estimate \eqref{L2b} applied to the interval $4I$, we have
				$$
				A_I
				\le
				\frac{1}{|I|^{1/2}}
				\|T_{A,n,\varepsilon}(b\chi_{4I})\|_{L^2(I)}
				\le
				C\Lambda_n \|b\|_\infty.
				$$
				
				For $B_I$, we use Lemma~\ref{hormander} together with the antisymmetry of the kernel. Since $x\in I$ and $y\in (4I)^c$, the points $x$ and $x_I$ are sufficiently separated from $y$, and hence
				$$
				\int_{(4I)^c}|K_n(x,y)-K_n(x_I,y)|\,dy
				\le
				C(1+\log n).
				$$
				%For the truncated kernels, the difference of truncation regions gives only an additional annular term, which is bounded by $CM^n$ by the size estimate of the kernel. 
				Therefore, $\displaystyle
				B_I
				\le
				C(1+\log n)\|b\|_\infty.
				$
				
				Combining the estimates for $A_I$ and $B_I$ and taking the supremum over all intervals $I$, we conclude that
				$$
				\|T_{A,n,\varepsilon}b\|_{\mathrm{BMO}}
				\le
				C(\Lambda_n+1+\log n)\|b\|_\infty.
				$$
				This proves \eqref{Lbmo}.
				
				Since the kernel is antisymmetric, the adjoint operator satisfies
				$T_{A,n,\varepsilon}^*=-T_{A,n,\varepsilon}$. Hence, the same bound holds for the adjoint with the same constant, and by duality we obtain
				$$
				\|T_{A,n,\varepsilon}\|_{H^1 \to L^1}
				\le
				C(\Lambda_n + 1 + \log n).
				$$
				Finally, by the Fefferman-Stein interpolation theorem between
				$
				H^1\to L^1
				$
				and
				$
				L^\infty\to \mathrm{BMO},
				$ \cite{FS},
				we get
				$\displaystyle
				\|T_{A,n,\varepsilon}\|_{L^2 \to L^2}
				\le
				C(\Lambda_n + 1 + \log n),
				$ and this proves the lemma.
			\end{proof}
			
			\section{Proof of Theorem \ref{2}: the Dini case}
			\label{secciodini}
			
			In this section we prove Theorem~\ref{2} when $\|A'\|_\infty=1$, that can be assumed without loss of generality. By the remark at the beginning of the previous section, it is enough to prove the estimate for odd values of \(n\), since the even case follows from the identity
			$\displaystyle
			T_{A,n+1,\varepsilon}(1)=T_{A,n,\varepsilon}(A')
			$
			together with Lemma~\ref{propo}. Accordingly, throughout this section we assume that \(n\) is odd, so that the positivity property
			$
			W_n(\alpha,\beta,\gamma)\ge 0
			$
			established in the previous section is available. 
			
			Recall that Lemma \ref{propo} yields
			$$
			\|T_{A,n}\|_{L^2\to L^2}
			\lesssim
			(\Lambda_n+1+\log n),
			$$
			provided that
			$$
			\|T_{A,n,\varepsilon}\chi_I\|_{L^2(I)}
			\le
			\Lambda_n |I|^{1/2}
			$$
			uniformly over all intervals $I$ and all $\varepsilon>0$.
			
			In the purely Lipschitz setting, one obtains $\Lambda_n\lesssim n$. Our goal here is to improve this growth under the additional Dini regularity assumption on $A'$.  
			
			The next lemma uses the compact support of \(A\) to reduce the testing condition, up to a harmless error, to intervals contained in a fixed neighbourhood of the support.
			\begin{lem}
				\label{lem1.1}
				Let $A$ be a $1$-Lipschitz function supported in $[0,1]$. Then, there exists an absolute constant $C>0$ such that
				\begin{equation*}
					\sup_{I\subset \R} \frac{\|T_{A,n,\ve}\chi_I\|_{L^2(I)}}{|I|^{1/2}} \leq \sup_{J\subset [-1,2]} \frac{\|T_{A,n,\ve}\chi_J\|_{L^2(J)}}{|J|^{1/2}}  + C.
				\end{equation*}
			\end{lem}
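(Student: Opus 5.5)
The plan is a case analysis on the position of $I$ relative to $[-1,2]$, using only the size bound $|K_n(x,y)|\le |x-y|^{-1}$ and the fact that $A\equiv 0$ off $[0,1]$. If $I\subset[-1,2]$ there is nothing to prove. If $I\cap[0,1]=\emptyset$, then for $x,y\in I$ we have $A(x)=A(y)=0$, so $K_n(x,y)=0$ and $T_{A,n,\ve}\chi_I\equiv0$ on $I$. It remains to treat intervals $I$ that meet $[0,1]$ but are not contained in $[-1,2]$; then $|I|\ge 1$ and $I$ sticks out of $[-1,2]$ on at least one side.

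For such $I$, set $J:=I\cap[-1,2]$, so $|J|\ge 1$ (one endpoint of $J$ is $-1$ or $2$ and $J$ meets $[0,1]$). Since $|I|\ge|J|$, it suffices to show $\|T_{A,n,\ve}\chi_I\|_{L^2(I)}\le \|T_{A,n,\ve}\chi_J\|_{L^2(J)}+C|I|^{1/2}$. Split $I=J\cup L$ with $L=I\setminus J\subset\R\setminus[-1,2]$, and write $\chi_I=\chi_J+\chi_L$. On $L\times L$ the kernel vanishes, so for $x\in L$, $T_{A,n,\ve}\chi_I(x)=\int_{J} K_n(x,y)\,dy$ and in fact only $y\in J\cap[0,1]$... more precisely, since $A(x)=0$, $K_n(x,y)=-A(y)^n/(x-y)^{n+1}$ up to sign, which vanishes unless $y\in[0,1]$. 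As $\operatorname{dist}(x,[0,1])\ge 1$, we get $|T\chi_I(x)|\le \int_0^1 |x-y|^{-n-1}dy\le \operatorname{dist}(x,[0,1])^{-2}$ (using $|A(y)|\le 1$ and $n\ge1$), which is square integrable over $\{ \operatorname{dist}\ge1\}$ with an absolute bound. For $x\in J$, $T\chi_I(x)=T\chi_J(x)+\int_L K_n(x,y)\,dy$, and again $K_n(x,y)=0$ unless $x\in[0,1]$; for $x\in[0,1]$, $y\in L$ we have $|y-x|\ge 1$, so $|\int_L K_n(x,y)dy|\le \int_{|x-y|\ge1}|x-y|^{-n-1}|A(x)|^n\,dy\le 2$. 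Hence $\|T\chi_I-T\chi_J\|_{L^2(J)}\le 2$.

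Combining, $\|T_{A,n,\ve}\chi_I\|_{L^2(I)}\le \|T_{A,n,\ve}\chi_J\|_{L^2(J)}+C\le \|T_{A,n,\ve}\chi_J\|_{L^2(J)}+C|I|^{1/2}$ since $|I|\ge1$, and dividing by $|I|^{1/2}\ge|J|^{1/2}$ gives the claim. The only delicate point is the bookkeeping that $K_n(x,y)$ vanishes whenever both points lie off $[0,1]$ and the decay $|K_n(x,y)|\le |A(\cdot)|^n|x-y|^{-n-1}$ when exactly one does; truncation at $\ve$ only shrinks the domains and does not affect these bounds.
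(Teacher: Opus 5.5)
Your proposal is correct and follows essentially the same route as the paper: set $J=I\cap[-1,2]$, split $\|T_{A,n,\ve}\chi_I\|_{L^2(I)}$ over $J$ and $I\setminus J$, use that $A$ vanishes off $[0,1]$ to get pointwise bounds on $T_{A,n,\ve}\chi_I$ over $I\setminus J$ and on $T_{A,n,\ve}\chi_I-T_{A,n,\ve}\chi_J$ over $J$, and finish with $|I|>1$ and $|J|\le|I|$. The only difference is that you use the decay $|x-y|^{-n-1}$ to obtain $O(1)$ error bounds, where the paper uses $\int_{I\cap[0,1]}\frac{dy}{|x-y|}\le C$ and Cauchy--Schwarz to get $O(|I|^{1/2})$ bounds; since $|I|>1$, either suffices.
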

			
			\begin{proof}
				
				Let $S:=[0,1]$ and $S_1:=[-1,2]$. Fix any interval $I\subset \R$ and write $J:=I\cap S_1$. It is clear that if $I\cap S = \varnothing$, then for each $x\in I$,
				\begin{equation*}
					T_{A,n,\ve} \chi_I (x) = \int_{I\cap\{|x-y|>\ve\}} \left( \frac{A(x)-A(y)}{x-y} \right)^n \frac{dy}{x-y} = 0,
				\end{equation*}
				and then $\|T_{A,n,\ve}\chi_I\|_{L^2(I)}=0$. So we assume $I\cap S \neq \varnothing$, so that $J\neq \varnothing$ with $|J|>0$. By the triangle inequality,
				\begin{equation*}
					\|T_{A,n,\ve}\chi_I\|_{L^2(I)} \leq \|T_{A,n,\ve}\chi_I\|_{L^2(J)}+\|T_{A,n,\ve}\chi_I\|_{L^2(I\setminus{J})}.
				\end{equation*}
				For the second term, note that if $x\in I\setminus{J}$, then $x\not\in [-1,2]$ and $A(x)=0$. Moreover, if $y\not\in S$, then $A(y)=0$. Therefore,
				\begin{equation*}
					|T_{A,n,\ve}\chi_I(x)| = \bigg\rvert \int_{\substack{I\cap[0,1] \\ |x-y|>\ve}} \left( \frac{-A(y)}{x-y} \right)^n \frac{dy}{x-y} \bigg\rvert \leq M^n \int_{I\cap[0,1]} \frac{dy}{|x-y|} \leq C M^n,
				\end{equation*}
				and we deduce $\|T_{A,n,\ve}\chi_I\|_{L^2(I\setminus{J})}\leq |I|^{1/2}$. For the first term, for each $x\in J$ we consider the difference
				\begin{equation*}
					T_{A,n,\ve}\chi_I (x) - T_{A,n,\ve}\chi_{J}(x) =  \int_{\substack{I\setminus{J} \\ |x-y|>\ve}} \left( \frac{A(x)-A(y)}{x-y} \right)^n \frac{dy}{x-y} = \int_{\substack{I\setminus{S_1} \\ |x-y|>\ve}} \left( \frac{A(x)}{x-y} \right)^n \frac{dy}{x-y}.
				\end{equation*}
				If $x\in J\setminus{S}$, then $A(x)=0$ and the previous difference vanishes. If $x\in J\cap S$, then $|x-y|>1$ and we can bound it explicitly:
				\begin{align*}
					|T_{A,n,\ve}\chi_I (x) - T_{A,n,\ve}\chi_{J}(x)| &\leq \int_{I\setminus{S_1}} \bigg\rvert \frac{A(x)}{x-y} \bigg\rvert^n \frac{dy}{|x-y|}\\
					&\leq \left( \int_{I\setminus{S_1}} \bigg\rvert \frac{A(x)}{x-y} \bigg\rvert^{2n} dy \right)^{\frac{1}{2}}\left(\int_{\R\setminus{S_1}}\frac{dy}{|x-y|^2}\right)^{\frac{1}{2}} \leq C |I|^{1/2}.
				\end{align*}
				Hence,
				\begin{equation*}
					\| T_{A,n,\ve}\chi_I - T_{A,n,\ve}\chi_{J} \|_{L^2(J)} \leq C |I|^{1/2}|J\cap S|^{1/2}\leq C |I|^{1/2}.
				\end{equation*}
				To sum up: if $I\subset S_1$, then $I=J$ and $\|T_{A,n,\ve}\chi_I\|_{L^2(I)} = \|T_{A,n,\ve}\chi_{J}\|_{L^2(J)}$. If $I \not\subset S_1$, the assumption $I\cap[0,1]\neq \varnothing$ implies $|I|>1$. Therefore
				\begin{equation*}
					\|T_{A,n,\ve}\chi_I\|_{L^2(I)} \leq \|T_{A,n,\ve}\chi_{J}\|_{L^2(J)} +  C |I|^{1/2},
				\end{equation*}
				meaning
				\begin{equation*}
					\frac{1}{|I|^{1/2}} \|T_{A,n,\ve}\chi_I\|_{L^2(I)} \leq \frac{1}{|J|^{1/2}} \|T_{A,n,\ve}\chi_{J}\|_{L^2(J)} + C,
				\end{equation*}
				and the result follows.\end{proof}
			Fix an interval $J:=[p_0,p_1]\subset [-1,2]$. 
			By \eqref{eq:unweighted-testing-condition} and Lemma~\ref{Wn}, the $L^2$ testing condition is reduced to estimating
			\begin{equation}
				\label{symmetrized-integral}
				\iiint_{\Sigma_\ve}
				\left(
				\frac{\alpha-\beta}{z-x}
				\right)^2
				W_n(\alpha,\beta,\gamma)
				\,dx\,dy\,dz.
			\end{equation}
			Here we recall that $\Sigma_\ve:=\{(x,y,z)\in J^3 \,:\, x<y<z,\, y-x>\ve,\, z-y>\ve\}$ and
			\begin{equation*}
				\alpha:=\frac{A(y)-A(x)}{y-x}, \quad \beta:=\frac{A(z)-A(y)}{z-y}, \quad
				\gamma:=\frac{A(z)-A(x)}{z-x},
			\end{equation*}
			that are such that $|\alpha|,|\beta|,|\gamma|\leq 1$, since $\|A'\|_\infty=1$. Set
			\begin{equation*}
				h:=z-x \qquad \text{and} \qquad \lambda:=\frac{y-x}{z-x},
			\end{equation*}
			so that $z=x+h$ and $y=x+\lambda h$. On $\Sigma_\ve$ we have $h\in[2\ve,|J|]$. Also, for each fixed $h$ we have $x\in[p_0,p_1-h]$ and for each fixed $h$ and $x$ we also have $\lambda\in(\frac{\ve}{h},1-\frac{\ve}{h})$. Therefore,  the change of variables $(x,y,z)\mapsto (x,x+\lambda h, x+h)$, whose Jacobian equals $|-h|=h$, transforms the triple integral into
			\begin{equation}
				\label{eq1.1}
				\int_{2\ve}^{|J|}\int_{p_0}^{p_1-h}\left(\int_{\frac{\ve}{h}}^{1-\frac{\ve}{h}} (\alpha-\beta)^2 W_n(\alpha,\beta,\gamma) d \lambda \right) d x \frac{d h}{h}.
			\end{equation}
			Observe that now
			\begin{equation*}
				\alpha(x,\lambda,h):=\frac{A(x+\lambda h)-A(x)}{\lambda h}, 
				\qquad 
				\beta(x,\lambda,h):=\frac{A(x+h)-A(x+\lambda h)}{(1-\lambda)h},
			\end{equation*}
			and these quotients can be rewritten in integral form as
			$$
			\alpha(x,\lambda,h)
			= \frac{1}{\lambda}\int_0^\lambda A'(x+hu)\,d u,\qquad
			\beta(x,\lambda,h)
			= \frac{1}{1-\lambda}\int_\lambda^1 A'(x+hu)\,d u.
			$$
			Therefore, if $\omega$ denotes the modulus of continuity of $A'$, then, for each triple $(x,\lambda,h)$ we have
			\begin{equation}\label{elem}
				|\alpha(x,\lambda,h)-\beta(x,\lambda,h)|\leq 2\omega(h).
			\end{equation}
			
			Indeed, for all $u,v\in(0,1)$ we have
			$$
			\left|A'(x+hu)-A'(x+hv)\right|\le \omega(h|u-v|),
			$$
			and \eqref{elem} follows immediately.
			
			Now we use the identity
			\begin{align*}
				(\alpha-\beta)^2 W_n(\alpha,\beta,\gamma)
				=
				(\alpha-\beta)\left(
				\alpha^n \sum_{k=0}^{n-1}\gamma^{n-1-k}\beta^k
				-
				\beta^n \sum_{k=0}^{n-1}\gamma^{n-1-k}\alpha^k
				\right).
			\end{align*}
			Since $|\alpha|,|\beta|,|\gamma|\le 1$, it follows from \eqref{elem} that
			\begin{align*}
				(\alpha-\beta)^2|W_n(\alpha,\beta,\gamma)|
				&\le
				|\alpha-\beta|
				\left(
				\left|
				\sum_{k=0}^{n-1}\gamma^{n-1-k}\beta^k
				\right|
				+
				\left|
				\sum_{k=0}^{n-1}\gamma^{n-1-k}\alpha^k
				\right|
				\right) \\
				&\le 2n|\alpha-\beta|
				\le 4n\,\omega(h).
			\end{align*}
			Returning to \eqref{eq1.1}, we obtain the bound
			\begin{align*}
				\int_{2\ve}^{|J|}\int_{p_0}^{p_1-h}
				\left(
				\int_{\frac{\ve}{h}}^{1-\frac{\ve}{h}} 4n\,\omega(h)\,d\lambda
				\right)
				d x\,\frac{d h}{h}
				\le Cn\,|J| \int_0^1 \frac{\omega(h)}{h}\,d h.
			\end{align*}
			
			From this estimate we conclude that there exists an absolute constant $C>0$ such that
			\begin{equation*}
				\frac{\|T_{A,n,\ve}\chi_J\|_{L^2(J)}}{|J|^{1/2}}
				\le C\bigl(1+\sqrt{nC_\omega}\bigr),
				\qquad
				C_\omega:=\int_0^1 \frac{\omega(h)}{h}\,d h.
			\end{equation*}
			
			Combining this estimate with Lemma~\ref{lem1.1}, we obtain
			$$
			\Lambda_n
			\lesssim
			1+\sqrt{nC_\omega}.
			$$
			Lemma~\ref{propo} therefore yields
			$$
			\|T_{A,n}\|_{L^2\to L^2}
			\lesssim
			1+\sqrt{nC_\omega}+\log n,
			$$
			which proves Theorem~\ref{2}.
			
			\section{Proof of Lemma~\ref{3}: reduction to a half-derivative estimate}
			\label{secciomig}
			
			The main goal of this section is to prove Lemma~\ref{3}, which provides the key structural reduction used later in the paper. It reduces the estimate of the symmetrized integral \eqref{symmetrized-integral} to the control of a half-order fractional derivative of the auxiliary function
			\begin{equation}\label{Fn}
				F_n(t,x):= \left(\frac{A(x+t)-A(x)}{t}\right)^n.
			\end{equation}
			Observe that since $\|A'\|_\infty=1$, we have $|F_n|\leq 1$. More precisely, we prove that 
			\begin{equation*}
				\|T_{A,n}\|_{L^2\to L^2} \leq C\Big(\sqrt{n}+\|\partial_t^{\frac{1}{2}} F_n\|_{L^2} \Big),
			\end{equation*} for some absolute constant $C$. 
			As an application, we obtain an improved bound for the $L^2$ norm of $T_{A,n}$, with $\sqrt n$ growth under the Sobolev assumption $A'\in H^{1/2}(\mathbb R)$. In the next section, we use Lemma~\ref{3} to derive a more general Besov-space estimate, yielding the same $\sqrt n$ growth.
			
			Recall that, by \eqref{eq:unweighted-testing-condition} and Lemma~\ref{Wn}, controlling the $L^2$ testing condition amounts to estimating the symmetrized integral \eqref{symmetrized-integral}, namely
			\begin{equation*}
				\iiint_{\Sigma_\ve}
				\left(
				\frac{\alpha-\beta}{z-x}
				\right)^2
				W_n(\alpha,\beta,\gamma)
				\,dx\,dy\,dz.
			\end{equation*}
			
			We start by reducing the study of this integral to simpler expressions. We begin with two auxiliary estimates.
			\begin{lem}
				If $\alpha,\;\beta\in [-1,1]$ and $\alpha\beta<0$, then
				\begin{equation*}
					|W_n(\alpha,\beta,\gamma)|\leq \min\bigg\{n,\frac{1}{1-\rho}\bigg\}, \qquad \text{where} \qquad \rho:=\min\{|\alpha|,|\beta|\}.
				\end{equation*}
			\end{lem}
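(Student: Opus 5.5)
The plan is to regroup the monomial expansion of $W_n$ from Lemma~\ref{Wn} by the power of $\gamma$, and to use the sign condition $\alpha\beta<0$ to control each inner sum by a single term, rather than by its number of terms as in the crude bound $|W_n|\le n^2$. Throughout, $|\gamma|\le 1$, since $\gamma$ is a convex combination of $\alpha,\beta\in[-1,1]$.

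\textbf{Regrouping.} Start from
$$
W_n(\alpha,\beta,\gamma)=\sum_{\substack{j+k+l=2n-2\\ j,k,l\le n-1}}\alpha^j\beta^k\gamma^l .
$$
Fix $l\in\{0,\dots,n-1\}$. Then $j+k=2n-2-l$ with $j,k\le n-1$, so $j$ runs from $n-1-l$ to $n-1$. Factoring out $(\alpha\beta)^{n-1-l}$ gives
$$
W_n(\alpha,\beta,\gamma)=\sum_{l=0}^{n-1}\gamma^l(\alpha\beta)^{n-1-l}\,G_l,\qquad G_l:=\sum_{i=0}^{l}\alpha^i\beta^{l-i}.
$$
This is a mechanical reindexing and should cause no trouble.

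\textbf{Key step.} The main point is to show $|G_l|\le 1$. Consecutive terms of $G_l$ differ by the factor $\alpha/\beta<0$, so the terms alternate in sign. Their absolute values $|\alpha|^i|\beta|^{l-i}$ form a geometric progression, hence are monotone in $i$. For an alternating sum with monotone absolute values, the partial sums are trapped between $0$ and the largest term. Therefore $|G_l|\le\max\{|\alpha|,|\beta|\}^l\le 1$. I expect this to be the only genuinely non-routine step. It is exactly where the hypothesis $\alpha\beta<0$ enters: without it, $G_l$ could be as large as $l+1$.

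\textbf{Conclusion.} With $|G_l|\le1$ and $|\gamma|\le 1$ we get
$$
|W_n(\alpha,\beta,\gamma)|\le\sum_{l=0}^{n-1}|\alpha\beta|^{\,n-1-l}.
$$
Since every summand is at most $1$, the sum is at most $n$. Moreover $|\alpha\beta|\le\min\{|\alpha|,|\beta|\}\cdot\max\{|\alpha|,|\beta|\}\le\rho$. Bounding the sum by the full geometric series then gives
$$
|W_n(\alpha,\beta,\gamma)|\le\sum_{m\ge0}\rho^m=\frac1{1-\rho}
$$
(trivially true if $\rho=1$). Taking the minimum of the two bounds yields the lemma.
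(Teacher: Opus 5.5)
Your proposal is correct and follows the paper's proof: it uses the same regrouping $W_n=\sum_{l}\gamma^l(\alpha\beta)^{n-1-l}\frac{\alpha^{l+1}-\beta^{l+1}}{\alpha-\beta}$, bounds the inner factor by $1$, and then sums $|\alpha\beta|^{n-1-l}\le\rho^{n-1-l}$ to get both bounds. The only minor difference is that you bound the inner sum with an alternating-series argument, whereas the paper shows $|\alpha^m-\beta^m|\le\alpha-\beta$ by splitting into even and odd $m$.
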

			\begin{proof}
				Assume $\alpha>0$ and $\beta<0$. We first claim that, for every integer $m\geq 1$,
				\begin{equation*}
					|\alpha^m-\beta^m|\leq \alpha-\beta.
				\end{equation*}
				Indeed, if $m$ is even,
				\begin{align*}
					|\alpha^m-\beta^m|=|\alpha^m-|\beta|^m|\leq \max\{\alpha^m,|\beta|^m\}\leq \max\{\alpha,|\beta|\}\leq \alpha+|\beta|=\alpha-\beta.
				\end{align*}
				On the other hand, if $m$ is odd,
				\begin{align*}
					|\alpha^m-\beta^m|=\alpha^m+|\beta|^m\leq \alpha+|\beta|=\alpha-\beta.
				\end{align*}
				So in any case, using the representation
				\begin{equation*}
					W_n(\alpha,\beta,\gamma)=\sum_{k=0}^{n-1} \gamma^k \sum_{j=0}^k \alpha^{n-1-j}\beta^{n-1-k+j} = \sum_{k=0}^{n-1}\gamma^k(\alpha\beta)^{n-1-k}\bigg( \frac{\alpha^{k+1}-\beta^{k+1}}{\alpha-\beta} \bigg),
				\end{equation*}
				we obtain
				\begin{equation*}
					|W_n(\alpha,\beta,\gamma)|\leq \sum_{k=0}^{n-1}|\gamma|^k|\alpha\beta|^{n-1-k}\leq \sum_{k=0}^{n-1} \rho^k \leq \min\bigg\{ n, \frac{1}{1-\rho}\bigg\}.
			\end{equation*}	\end{proof}
			This implies that in the subdomain of integration $\Sigma_\ve \cap\{\alpha\beta<0\}$, repeating the arguments in Section~\ref{secciolipschitz} based on the Fourier transform, the following bound holds:
			\begin{equation*}
				\iiint_{\Sigma_\ve \cap\{\alpha\beta<0\}} \left( \frac{\alpha-\beta}{z-x} \right)^2 W_n(\alpha,\beta,\gamma) d x d y d z \lesssim n\int_{I}|A'(t)-\langle A' \rangle_I|^2 dt.
			\end{equation*}
			This contribution yields a bound for $\|T_{A,n}\|_{L^2\to L^2}$ of the form
			\begin{equation*}
				1+\sqrt{n}\|A'\|_{{\mathrm{BMO}}} \lesssim \sqrt{n},
			\end{equation*}
			already improving the dependence on $n$. Thus, we focus our attention on the subdomain $\Sigma_\ve \cap\{\alpha\beta>0\}$. We split it into the four regions:
			\begin{equation*}
				\Sigma_\ve \cap \Big( \{\alpha>\beta>0\} \cup \{\beta>\alpha>0\} \cup \{\alpha<\beta<0\} \cup  \{\beta<\alpha<0\} \Big),
			\end{equation*}
			which we denote by $\Sigma_\alpha^+, \Sigma_\beta^+, \Sigma_\alpha^-$ and $\Sigma_\beta^-$ respectively. Observe that if $\operatorname{sgn}(\alpha)=\operatorname{sgn}(\beta)=s$, then also
			$\operatorname{sgn}(\gamma)=s$, since $\gamma$ is a convex combination of
			$\alpha$ and $\beta$. Moreover, each monomial appearing in $W_n$ has total
			degree $2n-2$, and hence
			$$
			W_n(\alpha,\beta,\gamma)
			=
			W_n(|\alpha|,|\beta|,|\gamma|).
			$$
			Therefore, it suffices to study the region $\Sigma_\alpha^+$, for example.
			\begin{lem}\label{pos}
				If $\alpha\beta>0$, then
				\begin{equation*}
					(\alpha-\beta)^2W_n(\alpha,\beta,\gamma)\leq (\alpha^n-\beta^n)^2.
				\end{equation*}
			\end{lem}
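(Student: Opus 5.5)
The plan is to reduce to positive $\alpha,\beta$ and then compare $W_n$ with $\bigl(\frac{\alpha^n-\beta^n}{\alpha-\beta}\bigr)^2$ coefficient by coefficient, using only positivity of monomials.

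First I reduce to the case $\alpha,\beta>0$. Since $\alpha\beta>0$ and $\gamma=\lambda\alpha+(1-\lambda)\beta$ is a convex combination, all three have the same sign. Every monomial of $W_n$ has even total degree $2n-2$, so $W_n(\alpha,\beta,\gamma)=W_n(|\alpha|,|\beta|,|\gamma|)$, as already observed before the statement. Moreover $(\alpha^n-\beta^n)^2=(|\alpha|^n-|\beta|^n)^2$ and $(\alpha-\beta)^2=(|\alpha|-|\beta|)^2$ when $\alpha,\beta$ share a sign. The representation
$$W_n=\sum_{\substack{j+k+l=2n-2\\ j,k,l\le n-1}}\alpha^j\beta^k\gamma^l$$
is symmetric in $\alpha$ and $\beta$, and the right-hand side is symmetric too. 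So I may assume $\alpha\ge\beta>0$, and then $\gamma\in[\beta,\alpha]$. If $\alpha=\beta$ both sides vanish, so I take $\alpha>\beta$.

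Next I use monotonicity in $\gamma$. All coefficients of $W_n$ as a polynomial in $\gamma$ are positive when $\alpha,\beta>0$, so $\gamma\le\alpha$ gives $W_n(\alpha,\beta,\gamma)\le W_n(\alpha,\beta,\alpha)$. Merging $j$ and $l$ into a single exponent of $\alpha$, I get
$$W_n(\alpha,\beta,\alpha)=\sum_{k=0}^{n-1}c_k\,\alpha^{2n-2-k}\beta^k.$$
Here $c_k$ is the number of pairs $(j,l)$ with $j+l=2n-2-k$ and $j,l\le n-1$. Since $j$ runs from $n-1-k$ to $n-1$, we have $c_k=k+1$.

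On the other side,
$$\frac{(\alpha^n-\beta^n)^2}{(\alpha-\beta)^2}=\Bigl(\sum_{i=0}^{n-1}\alpha^{n-1-i}\beta^i\Bigr)^2=\sum_{m=0}^{2n-2}d_m\,\alpha^{2n-2-m}\beta^m.$$
Here $d_m$ counts pairs $(i,i')$ with $i+i'=m$ and $i,i'\le n-1$, so $d_m=m+1$ for $m\le n-1$ and $d_m=2n-1-m>0$ for $m\ge n$. Thus $c_m=d_m$ for $m\le n-1$, and the extra terms with $m\ge n$ are nonnegative. Comparing monomials, all of which are positive, gives $W_n(\alpha,\beta,\alpha)\le\bigl(\frac{\alpha^n-\beta^n}{\alpha-\beta}\bigr)^2$. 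Multiplying by $(\alpha-\beta)^2$ yields the claim.

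The only delicate point is the bookkeeping in the reduction: checking that the sign and symmetry reductions preserve both sides, and that $\gamma$ lies between $\beta$ and $\alpha$, which is what makes the monotonicity step valid. The coefficient count itself is elementary.
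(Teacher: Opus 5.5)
Your proof is correct, but it follows a different route from the paper's. The paper starts from the closed form obtained in the proof of Lemma~\ref{Wn}, $(\alpha-\beta)^2W_n=\frac{\alpha^n\gamma^n}{\lambda}-\frac{\alpha^n\beta^n}{\lambda(1-\lambda)}+\frac{\beta^n\gamma^n}{1-\lambda}$, and computes the deficit exactly:
\[
(\alpha^n-\beta^n)^2-(\alpha-\beta)^2W_n(\alpha,\beta,\gamma)=\bigl[\lambda\alpha^n+(1-\lambda)\beta^n-\gamma^n\bigr]\Bigl(\frac{\alpha^n}{\lambda}+\frac{\beta^n}{1-\lambda}\Bigr).
\]
This is nonnegative by convexity of $t\mapsto t^n$ on $[0,\infty)$, applied to $\gamma=\lambda\alpha+(1-\lambda)\beta$. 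That approach gives an exact formula for the gap, namely a Jensen defect times a positive weight, and it relies on the precise convex-combination structure of $\gamma$.

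Your argument is purely combinatorial. It never uses the $\lambda$-representation; it uses only three ingredients:
\begin{enumerate}
\item the symmetric monomial expansion of $W_n$;
\item monotonicity of $W_n$ in $\gamma\ge 0$;
\item the coefficient match $c_k=k+1=d_k$ for $k\le n-1$.
\end{enumerate}
In exchange, it proves a slightly more robust statement. For $\alpha\beta>0$ you only need $|\gamma|\le\max\{|\alpha|,|\beta|\}$, in which case $(\alpha-\beta)^2|W_n(\alpha,\beta,\gamma)|\le(\alpha^n-\beta^n)^2$; $\gamma$ need not be exactly the $\lambda$-average of $\alpha$ and $\beta$. It also makes the slack visible: when $\alpha\ge\beta$, it consists of the loss from replacing $\gamma$ by $\alpha$, plus the cross terms $\sum_{m\ge n}(2n-1-m)\alpha^{2n-2-m}\beta^m$ of the squared geometric sum that $W_n(\alpha,\beta,\alpha)$ does not contain.
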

			\begin{proof}
				Assume $\alpha>0,\beta>0$. By the proof of Lemma \ref{Wn}
				\begin{equation*}
					(\alpha-\beta)^2W_n(\alpha,\beta,\gamma) = \frac{\alpha^n\gamma^n}{\lambda}
					-
					\frac{\alpha^n\beta^n}{\lambda(1-\lambda)}
					+
					\frac{\beta^n\gamma^n}{1-\lambda}.
				\end{equation*}
				Therefore,
				\begin{align*}
					(\alpha^n-\beta^n)^2
					-
					(\alpha-\beta)^2W_n(\alpha,\beta,\gamma)
					&=
					\alpha^{2n}+\beta^{2n}
					+\frac{\alpha^n\beta^n}{\lambda(1-\lambda)}
					-\frac{\alpha^n\gamma^n}{\lambda}
					-\frac{\beta^n\gamma^n}{1-\lambda} \\
					&=
					\Bigl[
					\lambda\alpha^n+(1-\lambda)\beta^n-\gamma^n
					\Bigr]
					\left(
					\frac{\alpha^n}{\lambda}
					+
					\frac{\beta^n}{1-\lambda}
					\right).
				\end{align*}
				Since $x\mapsto x^n$ is convex on $[0,\infty)$, we have
				$$
				\gamma^n
				=
				(\lambda\alpha+(1-\lambda)\beta)^n
				\le
				\lambda\alpha^n+(1-\lambda)\beta^n.
				$$
				Hence
				$$
				(\alpha^n-\beta^n)^2
				-
				(\alpha-\beta)^2W_n(\alpha,\beta,\gamma)
				\ge0,
				$$
				which proves the claim.\end{proof}
			
			We first observe that $
			\gamma=\lambda\alpha+(1-\lambda)\beta,
			$
			so that
			$
			\alpha-\gamma=(1-\lambda)(\alpha-\beta).
			$
			Since also
			$
			z-y=(1-\lambda)(z-x),
			$
			it follows that
			\begin{equation*}
				\frac{\alpha-\beta}{z-x}
				=
				\frac{\alpha-\gamma}{z-y}.
			\end{equation*}
			
			Hence, by Lemma~\ref{pos}, on $\Sigma_\alpha^+$ we have $\alpha\ge\gamma\ge\beta>0$, and therefore
			\begin{align*}
				(\alpha-\beta)^2W_n(\alpha,\beta,\gamma) &\leq (\alpha^n-\beta^n)^2 = (\alpha^n-\gamma^n)^2\bigg( \frac{\alpha^n-\beta^n}{\alpha^n-\gamma^n} \bigg)^2 \\
				&= (\alpha^n-\gamma^n)^2\bigg(\frac{\alpha-\beta}{\alpha-\gamma}\bigg)^2\Bigg[\frac{\sum_{j=0}^{n-1}\beta^j\alpha^{n-1-j}}{\sum_{j=0}^{n-1}\gamma^j\alpha^{n-1-j}}\Bigg]^2 \leq (\alpha^n-\gamma^n)^2\bigg(\frac{\alpha-\beta}{\alpha-\gamma}\bigg)^2.
			\end{align*}
			Therefore, the study of the integral \eqref{symmetrized-integral} can be reduced to estimating
			\begin{equation*}
				\iiint_{\Sigma^+_\alpha} \bigg( \frac{\gamma^n-\alpha^n}{z-y} \bigg)^2 d x d y d z.
			\end{equation*}
			
			We introduce the variables $t=z-x$ and $s:=y-x$, as well as the auxiliary function
			\begin{equation*}
				F_n(u,v):= \left(\frac{A(v+u)-A(v)}{u} \right)^n.
			\end{equation*}
			With this notation, the previous integral can be rewritten as
			\begin{align*}
				\iiint_{\Sigma^+_\alpha} \frac{1}{(z-y)^2} \bigg[ \bigg( \frac{A(z)-A(x)}{z-x} \bigg)^n&-\bigg( \frac{A(y)-A(x)}{y-x} \bigg)^n \bigg]^2 d x d y d z\\
				&=\iiint_{\Sigma^+_\alpha} \frac{|F_n(t,x)-F_n(s,x)|^2}{|t-s|^2}d x d t d s.
			\end{align*}
			We bound the latter expression as follows:
			\begin{align*}
				\iiint_{\Sigma^+_\alpha} \frac{|F_n(t,x)-F_n(s,x)|^2}{|t-s|^2}&d x d t d s \leq \iint_{\R^2} \frac{\|F_n(t,\cdot)-F_n(s,\cdot)\|_{L^2_x}^2}{|t-s|^2}d t d s\\
				&\hspace{-1cm}=\int_{\R}\frac{1}{|h|^2}\bigg( \int_{\R} \|F_n(s+h,\cdot)-F_n(s,\cdot)\|_{L^2_x}^2 d s \bigg)d h.
			\end{align*}
			Define $f_n:\mathbb R\to L^2(\mathbb R)$ by $f_n(t):=F_n(t,\cdot)$. Notice that the quantity
			\begin{equation*}
				\|f_n\|_{L^2(\R; L^2(\R))}^2:=\int_{\R} \|f_n(t)\|_{L^2_x}^2 d t ,
			\end{equation*}
			is finite. This is easily checked by considering separately the cases $|t|<1$ and $|t|>1$, and using that the support of $A$ is contained in $[0,1]$. This proves, in particular, that $F_n\in L^2(\R^2)$. The space $L^2(\R; L^2(\R))$ of $L^2(\R)$-valued functions is a Hilbert space, so applying Plancherel we deduce:
			\begin{align*}
				\int_{\R}\frac{1}{|h|^2}
				\bigg(
				\int_{\R}
				\|F_n(s+h,\cdot)&-F_n(s,\cdot)\|_{L^2_x}^2\,ds
				\bigg)dh
				=
				\int_{\R}\frac{1}{|h|^2}
				\left(
				\int_{\R}
				\|f_n(s+h)-f_n(s)\|_{L^2_x}^2\,ds
				\right)dh \\
				&=
				\int_{\R}\frac{1}{|h|^2}
				\left(
				\int_{\R}
				|e^{2\pi i h\tau}-1|^2
				\|\widehat f_n(\tau)\|_{L^2_x}^2\,d\tau
				\right)dh \\
				&=
				\int_{\R}
				\left(
				\int_{\R}
				\frac{|e^{2\pi i h\tau}-1|^2}{|h|^2}\,dh
				\right)
				\|\widehat f_n(\tau)\|_{L^2_x}^2\,d\tau =
				C\int_{\R}
				|\tau|\,
				\|\widehat f_n(\tau)\|_{L^2_x}^2\,d\tau \\
				&=
				C\int_{\R}
				\|\partial_t^{1/2}F_n(t,\cdot)\|_{L^2_x}^2\,dt 
				=
				C\|\partial_t^{1/2}F_n\|_{L^2}^2.
			\end{align*}
			where $C>0$ is an absolute constant and, for each fixed $x\in\mathbb R$,
			the operator $\partial_t^{1/2}$ is defined through the Fourier multiplier
			$|\tau|^{1/2}$ in the $t$ variable. This proves Lemma~\ref{3}.
			
			Although the next section yields a more general result in terms of Besov norms, here we derive a simpler estimate under the assumption that $A' \in H^{1/2}(\mathbb R)$. This bound is independent of the Dini estimate in Theorem~\ref{2}. 
			
			Before stating the result, let us recall the definition of $H^{1/2}(\mathbb R)$, the fractional Sobolev space endowed with the norm
			\begin{equation}
				\label{def6.2}
				\|f\|_{H^{1/2}}
				:=
				\bigg(
				\int_{\mathbb R} (1+|\xi|^2)^{1/2} |\widehat{f}(\xi)|^2 \, d\xi
				\bigg)^{1/2},
				\qquad f \in L^2(\mathbb R).
			\end{equation}
			
			The result is as follows.
			
			\begin{cor}
				\label{4}
				If $A$ is a $1$-Lipschitz function supported in $[0,1]$ with
				$A' \in H^{1/2}(\mathbb R)$, then
				$$
				\|T_{A,n}\|_{L^2 \to L^2}
				\le
				C\sqrt{n}\left(1+\|A'\|_{H^{1/2}}\right)^{1/2},
				$$ where $C>0$ is an absolute constant.
			\end{cor}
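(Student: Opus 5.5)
By Lemma~\ref{3} it suffices to show
$$\|\partial_t^{1/2}F_n\|_{L^2}^2\lesssim n\bigl(1+\|A'\|_{H^{1/2}}\bigr),$$
with $A$ normalized to be $1$-Lipschitz as in Section~\ref{secciomig}. The conclusion then reads $\|T_{A,n}\|\lesssim \sqrt n+\sqrt{n(1+\|A'\|_{H^{1/2}})}$, which is the statement. I will work with the difference form of the half-derivative already derived in the proof of Lemma~\ref{3}:
$$\|\partial_t^{1/2}F_n\|_{L^2}^2\simeq \iiint \frac{|F_n(t+h,x)-F_n(t,x)|^2}{h^2}\,dx\,dt\,dh .$$
Here $F_n=G^n$ with $G(t,x)=\int_0^1A'(x+tu)\,du$ and $|G|\le 1$.

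\emph{Step 1 (removing one power of $n$).} For $|a|,|b|\le1$ we have both $|a^n-b^n|\le 2$ and $|a^n-b^n|\le n|a-b|$. Combining them gives
$$|a^n-b^n|^2\le 2n|a-b|.$$
Hence
$$\|\partial_t^{1/2}F_n\|_{L^2}^2\lesssim n\iiint \frac{|G(t+h,x)-G(t,x)|}{h^2}\,dx\,dt\,dh .$$
This is the only place where $n$ enters. It explains why the final bound is linear in the regularity norm under the square root, rather than quadratic. It remains to show that this $L^1$-type, order-one Besov quantity in $t$ is $\lesssim 1+\|A'\|_{H^{1/2}}$.

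\emph{Step 2 (splitting the domain).} I split the integral by the size of $h$ and of $t$.

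For $|h|\ge 1$ I use $|G(t+h,x)-G(t,x)|\le |G(t+h,x)|+|G(t,x)|$. The support condition gives $|G(t,x)|\le \min\{1,2/|t|\}$, and $G(t,x)=0$ unless $x\in[-|t|,1]$ (roughly). Integrating in $x$ then gives $\int|G(t,x)|\,dx\lesssim 1$ uniformly in $t$? That is not integrable in $t$, so instead I will keep the difference and bound the $x$-integral by $\min\{1,|h|\}$-type quantities. Alternatively, at large scales I fall back on $|a^n-b^n|^2\le 4\min\{1,\cdot\}$ before Step 1. In either form this region should contribute $O(1)$.

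For $|h|\le 1$ and $|t|\le 3$ (the region where things are compactly supported in $x$, say $x\in[-4,2]$), I apply Cauchy--Schwarz in $(x,t)$ on this bounded set:
$$\iint |G(t+h,x)-G(t,x)|\,dx\,dt\lesssim \Bigl(\iint |G(t+h,x)-G(t,x)|^2\,dx\,dt\Bigr)^{1/2}.$$
For $|t|\ge 3$ the function $G$ is smooth in $t$ with $|\partial_tG|\lesssim t^{-2}$ on the relevant $x$-set. That region is then harmless.

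\emph{Step 3 (the main obstacle).} The real difficulty is turning the bounded-region term into an $H^{1/2}$ bound for $A'$. Cauchy--Schwarz leaves
$$\int_{|h|\le1}\frac{\omega_2(h)}{h^2}\,dh,\qquad \omega_2(h):=\|G(\cdot+h,\cdot)-G\|_{L^2_{t,x}(\mathrm{loc})},$$
which is a $B^{1}_{2,1}$-type quantity rather than the $H^{1/2}=B^{1/2}_{2,2}$ one.

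To handle it I will use the Hardy-average structure of $G$. Since $\widehat{G(t,\cdot)}(\xi)=\widehat{A'}(\xi)\,m(t\xi)$ with $m(\eta)=\frac{e^{2\pi i\eta}-1}{2\pi i\eta}$, Plancherel in $x$ gives
$$\omega_2(h)^2\lesssim\int|\widehat{A'}(\xi)|^2\int_{|t|\le3}|m((t+h)\xi)-m(t\xi)|^2\,dt\,d\xi .$$
The inner integral should be $\lesssim \min\{h^2\xi^2,\,h|\xi|\}$, using $|m'(\eta)|\lesssim(1+|\eta|)^{-1}$.

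This alone does not reach the $h^{-2}$ weight, so I expect to need a dyadic interpolation instead of a single Cauchy--Schwarz. I would decompose $A'=\sum_j\Delta_jA'$, estimate each frequency block separately, and apply Cauchy--Schwarz block by block before summing. If the sum only closes with a logarithmic loss, the natural fallback is a norm like $\|A'\|_{B^{0,1}_{1,1}}$, which presumably explains Theorem~\ref{5}. To recover the $H^{1/2}$ statement from that, I would then check the embedding of compactly supported $H^{1/2}$ functions, which the abstract indicates holds for $0<s<1$ in the analogous setting. This is the step I would work out first, since everything else is bookkeeping.
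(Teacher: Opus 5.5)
Your Step~1 is where the argument fails, and nothing later can repair it. The inequality $|a^n-b^n|^2\le 2n|a-b|$ is true, but the quantity it leaves you with,
\[
\iiint\frac{|G(t+h,x)-G(t,x)|}{h^2}\,dx\,dt\,dh,
\]
is $+\infty$ whenever $A\not\equiv 0$.

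To see this, take $x\in[0,1]$ and $s\ge 1$. The support condition gives $A(x+s)=0$, hence $G(s,x)=-A(x)/s$. So for $t\in[2,3]$ and $|h|\le 1$,
\[
|G(t+h,x)-G(t,x)|=\frac{|A(x)|\,|h|}{t(t+h)}\ge\frac{|A(x)|\,|h|}{12}.
\]
This region alone contributes at least $\frac{1}{12}\|A\|_{L^1}\int_{|h|\le 1}|h|^{-1}\,dh=\infty$.

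This is an instance of a general fact: a first difference to the first power, integrated against $|t-s|^{-2}$, is finite only for constant functions. The square in $|F_n(t+h,x)-F_n(t,x)|^2$ is exactly what makes the weight $h^{-2}$ integrable, and bounding one factor by $2$ throws it away.

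Your Step~3 hits the same wall. The computation above gives $\omega_2(h)\gtrsim |h|$, so $\int_{|h|\le 1}\omega_2(h)h^{-2}\,dh=\infty$. This is not a logarithmic loss that a Littlewood--Paley decomposition could absorb: splitting $A'$ into blocks and using the triangle inequality can only increase a quantity that is already infinite.

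Step~1 also discards the decay $|F_n|\le |t|^{-n}$. Your claim that $|\partial_t G|\lesssim t^{-2}$ for $|t|\ge 3$ is false as well. On the part of the support where $x+t\in[0,1]$, $\partial_t G$ contains the term $A'(x+t)/t$, so it is only $O(|t|^{-1})$. Consequently even the tail $|t|\ge 3$ of your $L^1$ integral diverges.

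What is missing is a way to extract $\sqrt n$ while staying in $L^2$. The paper applies Cauchy--Schwarz on the Fourier side in $t$:
\[
\|\partial_t^{1/2}F_n\|_{L^2}^2\simeq\int_{\R}|\tau|\,\|\widehat{f_n}(\tau)\|_{L^2_x}^2\,d\tau\lesssim\|F_n\|_{L^2}\,\|\partial_t F_n\|_{L^2}.
\]
It then uses two bounds. First, $\|F_n\|_{L^2}\lesssim 1$, which follows from $\|G(t,\cdot)\|_{L^{2n}_x}\le\min\{1,2/|t|\}$; this is where the decay of $G^n$ matters. Second, $\|\partial_t F_n\|_{L^2}=n\|G^{n-1}\partial_t G\|_{L^2}\le n\|\partial_t G\|_{L^2}$, combined with the exact Plancherel identity in $x$,
\[
\|\partial_t G\|_{L^2(\R^2)}^2=c\int_{\R}|\xi|\,|\widehat{A'}(\xi)|^2\,d\xi.
\]
Thus $n$ enters only through the chain rule, and $\sqrt n$ arises as the geometric mean of $\|F_n\|_{L^2}\lesssim 1$ and $\|\partial_t F_n\|_{L^2}\lesssim n\|A'\|_{H^{1/2}}$.

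Your fallback would be logically admissible: combine Theorem~\ref{5} with the embedding of compactly supported $H^{s}$ functions into $B^{0,1}_{1,1}$ from the Remark in Section~\ref{secciobesov}. The proof of Theorem~\ref{5} does not use this corollary, so there is no circularity. However, your Steps 1--3 cannot produce that Besov bound either. The paper's Besov argument avoids exactly your loss by using $(a^n-b^n)^2\le n(a-b)(a^{2n-1}-b^{2n-1})$. This keeps a second difference and allows a fractional integration by parts that pairs $\psi^2G^{2n-1}\in L^\infty$ with $\Lambda_t G\in L^1$.
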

			
			To prove Corollary \ref{4}, we need the following auxiliary estimate:
			\begin{lem}
				\label{lem3.1}
				There exists an absolute constant $C>0$ such that
				\begin{equation*}
					\|F_n\|_{L^2} \leq C\|A'\|_{L^{2n}}^n\lesssim 1, \qquad \|\partial_t F_n\|_{L^2} \leq C\, n\|A'\|_{H^{1/2}}.
				\end{equation*} 
			\end{lem}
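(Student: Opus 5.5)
The plan is to use the averaged representation of the difference quotient,
$$
Q(t,x):=\frac{A(x+t)-A(x)}{t}=\int_0^1 A'(x+tu)\,du,\qquad F_n=Q^n .
$$
Both estimates then follow from Jensen/Hölder in the averaging variable $u$, translation invariance in $x$, and the difference characterization of the homogeneous $\dot H^{1/2}$ norm. Throughout, $\|A'\|_\infty=1$ and $\operatorname{supp}A\subset[0,1]$.

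\emph{Estimate for $\|F_n\|_{L^2}$.} I will split into $|t|\le1$ and $|t|>1$.
\begin{itemize}
\item For $|t|\le 1$, Jensen gives $|F_n(t,x)|^2\le\int_0^1|A'(x+tu)|^{2n}\,du$. Integrating first in $x$ gives at most $\|A'\|_{L^{2n}}^{2n}$ for each $t$, and then the $t$-integral contributes at most $2\|A'\|_{L^{2n}}^{2n}$.
\item For $|t|>1$, note that $A'$ is supported in $[0,1]$ and $x<x+t$ (or the reverse), so $|A(x+t)-A(x)|\le\int_0^1|A'|\le\|A'\|_{L^{2n}}$ by Hölder. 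Hence $|F_n|\le \|A'\|_{L^{2n}}^n|t|^{-n}$.
\item Still for $|t|>1$, $F_n(t,\cdot)$ vanishes unless $x\in[0,1]$ or $x+t\in[0,1]$, a set of measure at most $2$. So this region contributes at most $C\|A'\|_{L^{2n}}^{2n}\int_{|t|>1}t^{-2n}\,dt$.
\end{itemize}
Adding the two regions gives $\|F_n\|_{L^2}\le C\|A'\|_{L^{2n}}^n$. Finally, $\|A'\|_{L^{2n}}\le \|A'\|_\infty|[0,1]|^{1/(2n)}= 1$, which gives the bound $\lesssim 1$.

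\emph{Estimate for $\|\partial_tF_n\|_{L^2}$.} We have $\partial_tF_n=nQ^{n-1}\partial_tQ$ and $|Q|\le1$, so $|\partial_tF_n|\le n|\partial_tQ|$. Differentiating the quotient gives
$$
\partial_tQ=\frac{A'(x+t)-Q}{t}=\frac1t\int_0^1\bigl(A'(x+t)-A'(x+tu)\bigr)\,du .
$$
By Jensen in $u$ and Fubini,
$$
\|\partial_tQ\|_{L^2}^2\le\int_0^1\int_{\R}\frac{\|A'(\cdot+t(1-u))-A'\|_{L^2}^2}{t^2}\,dt\,du ,
$$
where translation invariance in $x$ has moved the shift $tu$ onto the other term. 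For fixed $u\in(0,1)$, substitute $h=t(1-u)$, so $dt/t^2=(1-u)\,dh/h^2$. This turns the right-hand side into
$$
\int_0^1(1-u)\,du\int_{\R}\frac{\|A'(\cdot+h)-A'\|_{L^2}^2}{h^2}\,dh .
$$
By Plancherel, exactly as in the computation proving Lemma~\ref{3}, the $h$-integral equals $C\int|\xi||\widehat{A'}(\xi)|^2\,d\xi\le C\|A'\|_{H^{1/2}}^2$. Hence $\|\partial_tF_n\|_{L^2}\le Cn\|A'\|_{H^{1/2}}$.

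\emph{Main obstacle.} The only delicate step is justifying the differentiation in $t$ and the Fubini/Jensen manipulations, since $A'$ is merely bounded. I would do the computation first for smooth $A$, using a mollification $A_\delta$ supported in a slightly larger interval, where everything is classical. Then I would pass to the limit using $\|A_\delta'\|_{H^{1/2}}\le\|A'\|_{H^{1/2}}$ together with weak lower semicontinuity of the $L^2$ norm of $\partial_tF_n$.
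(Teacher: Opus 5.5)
Your proof is correct. The bound for $\|F_n\|_{L^2}$ is essentially the paper's argument. The paper uses Minkowski to get $\|G(t,\cdot)\|_{L^{2n}_x}\le\|A'\|_{L^{2n}}\min\{1,2/|t|\}$, with the decay coming from $\|A\|_{L^{2n}}\le\|A'\|_{L^{2n}}$. You instead use Jensen for $|t|\le1$, and for $|t|>1$ the pointwise bound $|A(x+t)-A(x)|\le\|A'\|_{L^1}$ together with the size of the $x$-support. The outcome is the same.

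For $\partial_tF_n$, both arguments start from the reduction $|\partial_tF_n|\le n|\partial_tG|$ (the paper's $G$ is your $Q$), and then you take a genuinely different route.

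The paper Fourier transforms in $x$, obtaining $\widehat{\partial_tG}(t,\xi)=t^{-2}\bigl(2\pi i\xi t\,e^{2\pi i\xi t}-e^{2\pi i\xi t}+1\bigr)\widehat A(\xi)$, and integrates in $t$ using the scaling $u=2\pi\xi t$. This gives an exact identity $\iint|\partial_tG|^2=c\int|\xi||\widehat{A'}(\xi)|^2\,d\xi$, so nothing is lost at this step. The price is computing the transform and checking that $\int|iue^{iu}-e^{iu}+1|^2u^{-4}\,du<\infty$.

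You instead write $\partial_tG$ as an average of first differences of $A'$, apply Jensen, translate, and rescale by $h=t(1-u)$. This reduces everything to the first-difference form $\int\|A'(\cdot+h)-A'\|_{L^2}^2h^{-2}\,dh$ of the $\dot H^{1/2}$ seminorm, which is exactly the Plancherel identity already used in the proof of Lemma~\ref{3}. Your argument gives only an inequality, because of the Jensen step. In exchange it avoids any oscillatory integral, stays real-variable until the last line, and carries over verbatim to $L^p$, bounding $\|\partial_tG\|_{L^p}^p$ by a multiple of $\int\|A'(\cdot+h)-A'\|_{L^p}^p|h|^{-p}\,dh$.

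The paper does not address the justification issue you flag, and your mollification argument handles it. Uniform convergence $A_\delta\to A$ gives $F_{n,\delta}\to F_n$ boundedly and pointwise for $t\neq0$. That identifies the weak limit of $\partial_tF_{n,\delta}$ with the distributional $t$-derivative of $F_n$, which is the object actually needed when the bound is used on the Fourier side in Corollary~\ref{4}.
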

			
			Assuming Lemma~\ref{lem3.1}, we can now prove Corollary~\ref{4}. Since
			\[
			\|\partial_t^{1/2}F_n\|^2_{L^2}
			\simeq
			\int_{\R}|\tau|\|\widehat{f}_n(\tau)\|^2_{L^2_x}\,d\tau,
			\]
			Cauchy--Schwarz and Plancherel's identity give
			\begin{align*}
				\|\partial_t^{1/2}F_n\|^2_{L^2}
				&\lesssim
				\left(
				\int_{\R}\|\widehat{f}_n(\tau)\|_{L^2_x}^2\,d\tau
				\right)^{1/2}
				\left(
				\int_{\R}|\tau|^2\|\widehat{f}_n(\tau)\|_{L^2_x}^2\,d\tau
				\right)^{1/2} \\
				&\simeq
				\|F_n\|_{L^2}\|\partial_tF_n\|_{L^2} \lesssim
				n\|A'\|_{H^{1/2}}.
			\end{align*}
			Thus
			$
			\|\partial_t^{1/2}F_n\|_{L^2}
			\lesssim
			\sqrt{n
				\|A'\|_{H^{1/2}}
			}.
			$
			Combining this estimate with Lemma~\ref{3}, we obtain 
			$$
			\|T_{A,n}\|_{L^2\to L^2}
			\le	C\sqrt{n}\left(1+\|A'\|_{H^{1/2}}\right)^{1/2}.
			$$
			This proves Corollary~\ref{4}.
			
			\begin{proof}[Proof of Lemma~\ref{lem3.1}]
				Define
				\begin{equation*}
					G(t,x):=\frac{A(x+t)-A(x)}{t}=\int_0^1 A'(x+\theta t)d\theta,
				\end{equation*}
				so that $F_n(t,x)=G^n(t,x)$. Since $|A'|\le 1$, we have $|G(t,x)|\le 1$. We first estimate $G$ in $L_x^p$.  For any $1\leq p <\infty$ , Minkowski's integral inequality yields
				\begin{equation*}
					\|G(t,\cdot)\|_{L^p_x} = \bigg\| \int_0^1 A'(\cdot +\theta t)d\theta \bigg\|_{L^p_x}\leq  \int_0^1 \| A'(\cdot +\theta t) \|_{L^p_x}d \theta = \|A'\|_{L^p}.
				\end{equation*}
				Since  $A(0)=A(1)=0$, we have for every $x\in[0,1]$,
				\begin{equation*}
					|A(x)|=\bigg\rvert \int_0^x A'(s)d s \bigg\rvert \leq x^{1-\frac{1}{p}}\bigg( \int_{\R}|A'(s)|^pd s \bigg)^{\frac{1}{p}} \leq \|A'\|_{L^p}.
				\end{equation*}
				Hence, for every $t\neq 0$, the following bound also holds:
				\begin{equation*}
					\|G(t,\cdot)\|_{L^p_x}\leq \frac{2}{|t|}\|A\|_{L^p}\leq \frac{2}{|t|}\|A'\|_{L^p}.
				\end{equation*}
				Combining the previous estimates, we obtain
				\begin{equation}
					\label{eq3.1}
					\|G(t,\cdot)\|_{L^p_x}\leq \|A'\|_{L^p} \min\bigg\{ 1,\frac{2}{|t|} \bigg\}.
				\end{equation}
				Choosing $p=2n$, it follows that
				\begin{align*}
					\|F_n\|_{L^2}^2 &= \int_{\R}\bigg( \int_{\R} |G(t,x)|^{2n} d x \bigg) d t \leq \|A'\|_{L^{2n}}^{2n}\int_{\R} \min\bigg\{ 1,\frac{2}{|t|} \bigg\}^{2n}d t\\
					&=\|A'\|_{L^{2n}}^{2n} \bigg( 4+\frac{4}{2n-1} \bigg) \leq 8 \|A'\|_{L^{2n}}^{2n},
				\end{align*}
				which proves the first estimate, since $A'$ is supported in $[0,1]$ and therefore
				$\|A'\|_{L^{2n}}\le 1.$ To estimate $\|\partial_tF_n\|_{L^2}$, we compute
				\begin{align*}
					\|\partial_t F_n\|_{L^2}^2 &= n^2\iint_{\R^2} |G(t,x)|^{2n-2}|\partial_t G(t,x)|^2 d x d t \leq n^2\iint_{\R^2} |\partial_t G(t,x)|^2 d x d t \\
					&= n^2 \iint_{\R^2} \bigg\rvert \frac{A'(x+t)-\frac{A(x+t)-A(x)}{t}}{t} \bigg\rvert^2 d x d t\\
					&= n^2 \int_{\R}\bigg( \int_{\R} \bigg\rvert \bigg( \frac{2\pi i \xi t\, e^{2\pi i \xi t}-e^{2\pi i \xi t}+1}{t^2} \bigg)\widehat{A}(\xi) \bigg\rvert^2 d\xi \bigg)d t\\
					&= 8\pi^3 n^2\bigg( \int_{\R} \frac{|iu\,e^{iu}-e^{iu}+1|^2}{u^4} d u \bigg)\bigg( \int_{\R} |\xi|^3|\widehat{A}(\xi)|^2 d \xi \bigg)\\
					&=\frac{16\pi^4}{3} n^2 \int_{\R} |\xi|^3|\widehat{A}(\xi)|^2 d \xi = Cn^2\int_{\R}|\xi||\widehat{A'}(\xi)|^2d\xi.
				\end{align*}
				The last expression is precisely the squared homogeneous $\dot H^{1/2}$ seminorm of $A'$, where the dot denotes the homogeneous Sobolev space. Since
				$\displaystyle
				\|A'\|_{\dot H^{1/2}} \le \|A'\|_{H^{1/2}},
				$
				the second estimate follows. 
			\end{proof}
			
			\section{Proof of Theorem~\ref{5}: the Besov case}
			\label{secciobesov}
			
			To prove Theorem~\ref{5} in the case $\|A'\|_\infty=1$, namely, to establish
			$$
			\|T_{A,n}\|_{L^2\to L^2}
			\le
			C\sqrt n\left(
			1+\|A'\|_{B^{0,1}_{1,1}}
			\right)^{1/2},
			$$
			we use the reduction provided by Lemma~\ref{3}. Thus, it remains to estimate
			$$
			\|\partial_t^{1/2}F_n\|_{L^2}^2
			\simeq
			\int_{\mathbb R} |\tau|\,\|\widehat f_n(\tau)\|_{L_x^2}^2\,d\tau,
			$$
			where $F_n$ is the function defined in \eqref{Fn}. Write
			$$\displaystyle
			G(t,x):=\frac{A(x+t)-A(x)}{t}.
			$$
			Then $F_n=G^n$.
			Let us fix $\psi\in \mathcal{C}_c^\infty(\R)$ with $0\leq \psi \leq 1$ and so that $\text{supp}(\psi)\subset [-3,3]$ and $\psi\equiv 1$ in $[-2,2]$. We have
			\begin{equation*}
				\|\partial_t^{\frac{1}{2}} F_n\|_{L^2}^2\leq 2\|\partial_t^{\frac{1}{2}} [(1-\psi(t))F_n]\|_{L^2}^2+2\|\partial_t^{\frac{1}{2}} [\psi(t)F_n]\|_{L^2}^2.
			\end{equation*}
			We begin with the first term. By \eqref{eq3.1}
			\begin{align*}
				\|F_n(t,\cdot)\|_{L^2_x} = \|G(t,\cdot)\|_{L^{2n}_x}^n \leq \frac{2^n}{|t|^n}\|A'\|_{L^{2n}}^n\leq \frac{2^n}{|t|^n}.
			\end{align*}
			Then,
			$\displaystyle
			\|F_n(t,\cdot)\|_{L^2_x}\leq 1$ if $|t|>2.$
			Also, since $A(0)=A(1)=0,$ it is not hard to prove that for $u\in [0,1]$, we have $|A(u)|\leq \min{(u,1-u)} \leq \frac{1}{2}$. Hence,  for every $t\neq 0$,
			\begin{align*}
				|G(t,x)|= \bigg\rvert \frac{A(x+t)-A(x)}{t} \bigg\rvert \leq \frac{1}{|t|}.
			\end{align*}
			For almost every $|t|>1$, we also have
			\begin{equation}\label{cota}
				|\partial_t G(t,x)|
				=
				\left|
				\frac{tA'(x+t)-A(x+t)+A(x)}{t^2}
				\right|
				\lesssim
				\frac1{|t|}.
			\end{equation}
			%and it also holds for $|t|\neq 0$,
			%\begin{align*}
			%	|\partial_t G(t,x)|= \frac{1}{M}\bigg\rvert \frac{tA'(x+t)-A(x+t)+A(x)}{t^2} \bigg\rvert \leq \frac{2}{|t|}.
			%\end{align*}
			Since for each fixed $t$ we have $|\text{supp}_x(G(t,\cdot))|\leq 2$, we deduce that for almost every $|t|>1$,
			\begin{align*}
				\|\partial_t F_n(t, \cdot)\|^2_{L^2_x} &= n^2\|G(t,\cdot)^{n-1}\partial_t G(t,\cdot)\|^2_{L^2_x}\\
				&\leq n^2|\text{supp}_x(G(t,\cdot))|\|G(t,\cdot)\|_\infty^{2n-2}\|\partial_t G(t,\cdot)\|_\infty^2\lesssim \frac{n^2}{t^{2n}}.
			\end{align*}
			Since $\operatorname{supp}\psi'\subset\{2\le |t|\le 3\}$ and
			$\|F_n(t,\cdot)\|_{L^2_x}\le 1$ for $|t|>2$, we have
			$\displaystyle
			\|\psi'(t)F_n\|_{L^2}\le \|\psi'\|_{L^2}.
			$
			
			Therefore, applying Plancherel and Cauchy--Schwarz as in the previous section, we obtain a sharper bound for this first term:
			\begin{align*}
				\|\partial_t^{1/2}[(1-\psi(t))F_n]\|_{L^2}^2
				&\leq C \|(1-\psi(t))F_n\|_{L^2}
				\|\partial_t[(1-\psi(t))F_n]\|_{L^2}\\
				&\leq C\bigg(\int_{|t|>2} \|F_n(t,\cdot)\|^2_{L^2_x}\,dt\bigg)^{1/2}
				\Big(
				\|(1-\psi(t))\partial_t F_n\|_{L^2}
				+
				\|\psi'(t)F_n\|_{L^2}
				\Big)\\
				&\lesssim \bigg(\int_{|t|>2} \frac{1}{t^{2n}}\,dt\bigg)^{1/2}
				\bigg[
				n\bigg(\int_{|t|>2} \frac{dt}{t^{2n}} \bigg)^{1/2}
				+
				\|\psi'\|_{L^2}
				\bigg]
				\lesssim 1.
			\end{align*}
			
			Thus, it remains to estimate
			$$
			\|\partial_t^{1/2}(\psi(t) F_n)\|_{L^2}^2.
			$$
			
			For notational convenience, set
			$\displaystyle
			H(t,x):=\psi(t)F_n(t,x).
			$
			Let us first observe that, on the one hand, by Plancherel's identity
			\begin{equation*}
				\|\partial_t^{\frac{1}{2}} H\|_{L^2}^2 = \int_{\R}\int_{\R} |\tau||\widehat{H}(\tau,x)|^2d \tau d x.
			\end{equation*}
			On the other hand, for each fixed $x$, 
			\begin{align*}
				\iint_{\R^2} \frac{|H(t,x)-H(s,x)|^2}{|t-s|^2}d t d s & = \int_{\R}\left( \int_{\R} \big\rvert H(t,x)-H(t-h,x) \big\rvert^2 d t \right) \frac{d h}{h^2}\\
				&=\int_{\R}\left( \int_{\R} \frac{|1-e^{-2\pi i h\tau}|^2}{|h|^2} d h \right)|\widehat{H}(\tau,x)|^2d \tau\\
				&\simeq \int_{\R}\left( \int_{\R} \frac{\sin{(u)}^2}{u^2} d u \right)|\tau||\widehat{H}(\tau,x)|^2d \tau\simeq \int_{\R}|\tau||\widehat{H}(\tau,x)|^2d \tau.
			\end{align*}
			Hence, we obtain the identity
			\begin{equation}\label{identity}
				\|\partial_t^{1/2} H\|_{L^2}^2
				=
				\frac{1}{4\pi^2}
				\int \!\! \iint
				\frac{|H(t,x)-H(s,x)|^2}{|t-s|^2}
				\,dt\,ds\,dx.
			\end{equation}
			We split the integrand as follows:
			\begin{align*}
				\int \!\! \iint
				\frac{|H(t,x)-H(s,x)|^2}{|t-s|^2}
				\,dt\,ds\,dx
				&\leq
				2\int \!\! \iint
				|\psi(t)|^2
				\frac{|G(t,x)^n-G(s,x)^n|^2}{|t-s|^2}
				\,dt\,ds\,dx \\
				&\quad+
				2\int \!\! \iint
				|\psi(t)-\psi(s)|^2
				\frac{|G(s,x)|^{2n}}{|t-s|^2}
				\,dt\,ds\,dx \\
				&=: 2I_1+2I_2.
			\end{align*}
			Let us first estimate $I_2$. We rewrite it as
			\begin{equation*}
				\int\!\!\iint
				|\psi(t)-\psi(s)|^2
				\frac{|G(s,x)|^{2n}}{|t-s|^2}
				\,dt\,ds\,dx
				=
				\int
				\bigg(
				\int
				\frac{|\psi(t)-\psi(s)|^2}{|t-s|^2}
				\,dt
				\bigg)
				\|G(s,\cdot)\|_{L_x^{2n}}^{2n}
				\,ds.
			\end{equation*}
			For the inner integral, observe that if $|s|\leq 6$
			\begin{align*}
				\int_{\R}\frac{ |\psi(t)-\psi(s)|^2}{|t-s|^2}d t &= \int_{|t-s|\leq 1}\frac{ |\psi(t)-\psi(s)|^2}{|t-s|^2}d t+\int_{|t-s|> 1}\frac{ |\psi(t)-\psi(s)|^2}{|t-s|^2}d t\\
				&\leq 2\|\psi'\|^2_\infty+\int_{|t-s|> 1}\frac{ 4\|\psi\|^2_\infty}{|t-s|^2}d t \lesssim 1.
			\end{align*}
			Also, if $|s|>6$,
			\begin{align*}
				\int_{\R}\frac{ |\psi(t)-\psi(s)|^2}{|t-s|^2}d t = \int_{|t|<3}\frac{ |\psi(t)|^2}{|t-s|^2}d t \leq \frac{4}{s^2} \int_{|t|<3}|\psi(t)|^2d t \lesssim \frac{1}{s^2}.
			\end{align*}
			Therefore we deduce
			\begin{equation}
				\int_{\R}\frac{ |\psi(t)-\psi(s)|^2}{|t-s|^2}d t \lesssim \frac{1}{1+s^2}, \qquad \forall s \in \R,
			\end{equation}
			which implies, 
			\begin{align*}
				I_2=\int_{\R}\bigg(\int_{\R}\frac{ |\psi(t)-\psi(s)|^2}{|t-s|^2}&d t \bigg) \|G(s,\cdot)\|_{L_x^{2n}}^{2n} d s \lesssim \int_{\R} \frac{\|G(s,\cdot)\|_{L^{2n}_x}^{2n}}{1+s^2}d s \leq \int_{\R} \frac{d s}{1+s^2}\lesssim 1,
			\end{align*}
			because $|G|\leq 1$ and $|\mbox{supp}_x(G(s,\cdot))|\leq 2$.
			
			So we are left with $I_1$.
			%\begin{equation}
			%	\label{eq4.2}
			%	\int_{\R}\iint_{\R^2} \psi(t)^2\frac{|G(t,x)^n-G(s,x)^n|^2}{|t-s|^2}d t d s d x.
			%\end{equation}
			For this estimate, we use two lemmas. First, we introduce the operator
			\begin{equation*}
				\Lambda f(t)
				:=
				c\,\mathrm{p.v.}\int_{\mathbb R}
				\frac{f(t)-f(s)}{|t-s|^2}\,ds,
			\end{equation*}
			where the constant \(c>0\) is chosen so that \(\Lambda\) has Fourier symbol \(|\tau|\). The definition can be understood for \(f\in C_c^\infty(\mathbb R)\).
			
			We consider the associated bilinear form: for \(f,g\in C_c^\infty(\mathbb R)\),
			\begin{align*}
				\langle g, \Lambda f \rangle
				&:=
				\int_{\mathbb R} g(t)\Lambda f(t)\,dt  =
				c\lim_{\varepsilon\to0}
				\int_{\mathbb R}
				\left(
				\int_{|t-s|>\varepsilon}
				g(t)\frac{f(t)-f(s)}{|t-s|^2}\,ds
				\right)\,dt
				=: c\lim_{\varepsilon\to0} I_\varepsilon.
			\end{align*}
			Observe that we can rewrite \(I_\varepsilon\) as
			\begin{align*}
				I_\varepsilon
				&=
				\int_{\mathbb R}
				\left(
				\int_{|t-s|>\varepsilon}
				\frac{f(t)g(t)}{|t-s|^2}\,ds
				\right)\,dt
				-
				\int_{\mathbb R}
				\left(
				\int_{|t-s|>\varepsilon}
				\frac{f(s)g(t)}{|t-s|^2}\,ds
				\right)\,dt \\
				&=
				\int_{\mathbb R}
				\left(
				\int_{|t-s|>\varepsilon}
				\frac{f(t)g(t)}{|t-s|^2}\,ds
				\right)\,dt
				-
				\int_{\mathbb R}
				\left(
				\int_{|t-s|>\varepsilon}
				\frac{f(t)g(s)}{|t-s|^2}\,ds
				\right)\,dt \\
				&=
				\int_{\mathbb R}
				\left(
				\int_{|t-s|>\varepsilon}
				f(t)\frac{g(t)-g(s)}{|t-s|^2}\,ds
				\right)\,dt,
			\end{align*}
			where in the second integral we exchanged the roles of \(s\) and \(t\). Adding the two expressions for \(I_\varepsilon\) and dividing by \(2\), we obtain
			\begin{align*}
				\langle g,\Lambda f\rangle
				=
				\frac{c}{2}
				\iint_{\mathbb R^2}
				\frac{(f(t)-f(s))(g(t)-g(s))}{|t-s|^2}\,dt\,ds.
			\end{align*}
			We now establish a localized fractional integration by parts lemma.
			\begin{lem}
				\label{lem4.1}
				Let $\eta\in C_c^\infty$, and let $u$ and $v$ be sufficiently regular functions. Then
				\begin{align*}
					\iint
					\eta(t)\frac{(u(t)-u(s))(v(t)-v(s))}{|t-s|^2}\,dt\,ds
					&=
					\frac{2}{c}\langle \eta v, \Lambda u \rangle \\
					&\quad-
					\iint
					v(t)\frac{(\eta(t)-\eta(s))(u(t)-u(s))}{|t-s|^2}\,dt\,ds.
				\end{align*}
			\end{lem}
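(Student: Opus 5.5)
Since this lemma is an algebraic identity, the plan is to expand the product $\eta(t)(u(t)-u(s))$ and compare it with the symmetric bilinear formula for $\langle g,\Lambda f\rangle$ derived just above, taking $f=u$ and $g=\eta v$. That formula reads
\[
\frac{2}{c}\langle \eta v,\Lambda u\rangle=\iint\frac{(u(t)-u(s))\bigl(\eta(t)v(t)-\eta(s)v(s)\bigr)}{|t-s|^2}\,dt\,ds .
\]
So the task is to rewrite $\eta(t)v(t)-\eta(s)v(s)$ in a way that isolates $\eta(t)(v(t)-v(s))$ and produces the commutator term on the right-hand side of the lemma.

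The algebra is a discrete Leibniz rule. Choosing the splitting that puts $v$ at the point $t$ in the correction term, we write
\[
\eta(t)v(t)-\eta(s)v(s)=\eta(t)\bigl(v(t)-v(s)\bigr)+v(s)\bigl(\eta(t)-\eta(s)\bigr).
\]
Substituting this gives
\[
\frac{2}{c}\langle \eta v,\Lambda u\rangle=\iint \eta(t)\frac{(u(t)-u(s))(v(t)-v(s))}{|t-s|^2}\,dt\,ds+\iint v(s)\frac{(\eta(t)-\eta(s))(u(t)-u(s))}{|t-s|^2}\,dt\,ds .
\]
In the last integral we exchange $s$ and $t$. The factor $(\eta(t)-\eta(s))(u(t)-u(s))$ and the kernel $|t-s|^{-2}$ are both symmetric under this swap, while $v(s)$ becomes $v(t)$. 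This is exactly the subtracted term in the statement, and rearranging yields the claimed identity.

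The only real issue is justification, namely absolute convergence of each double integral so that splitting and relabeling are legitimate. For $u,v$ Lipschitz and bounded, with suitable decay or $H^{1/2}$-type integrability, and $\eta\in C_c^\infty$, the argument runs as follows.

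1. Near the diagonal, $|(\eta(t)-\eta(s))(u(t)-u(s))|\lesssim |t-s|^2$, so the integrand is locally bounded.
2. Away from the diagonal, the compact support of $\eta$ together with the decay $|t-s|^{-2}$ gives integrability, provided $u,v$ are bounded.
3. The first integral is controlled by Cauchy--Schwarz through the Gagliardo $\dot H^{1/2}$ seminorms of $u$ and $v$, finite by the identity \eqref{identity}.
4. The $\langle\eta v,\Lambda u\rangle$ term is then finite as a difference of finite quantities. To be safe I would perform the computation on the truncated domain $|t-s|>\varepsilon$, which is symmetric in $(s,t)$ so the relabeling remains valid there, and then let $\varepsilon\to0$ as in the definition of $I_\varepsilon$.

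I expect this convergence bookkeeping to be the only delicate step, since the interpretation of "sufficiently regular" is left to the application $u=G(\cdot,x)^n$, $v$ similar, $\eta=\psi^2$ or $\psi$.
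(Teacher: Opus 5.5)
Your proposal is correct and matches the paper's proof exactly: you apply the symmetric bilinear formula with $f=u$ and $g=\eta v$, split $\eta(t)v(t)-\eta(s)v(s)=\eta(t)(v(t)-v(s))+v(s)(\eta(t)-\eta(s))$, and then swap $s$ and $t$ in the last integral. Your convergence remarks, including working on the symmetric truncated region $|t-s|>\varepsilon$, add justification the paper leaves implicit under ``sufficiently regular''.
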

			\begin{proof}
				By definition,
				\begin{align*}
					\frac{2}{c}\langle \eta v, \Lambda u \rangle
					&=
					\iint
					\frac{(u(t)-u(s))(\eta(t)v(t)-\eta(s)v(s))}{|t-s|^2}\,dt\,ds \\
					&=
					\iint
					\frac{(u(t)-u(s))\big[\eta(t)(v(t)-v(s))+v(s)(\eta(t)-\eta(s))\big]}{|t-s|^2}\,dt\,ds \\
					&=
					\iint
					\eta(t)\frac{(u(t)-u(s))(v(t)-v(s))}{|t-s|^2}\,dt\,ds \\
					&\quad+
					\iint
					v(s)\frac{(u(t)-u(s))(\eta(t)-\eta(s))}{|t-s|^2}\,dt\,ds.
				\end{align*}
				Exchanging the roles of $t$ and $s$ in the last integral yields the result.
			\end{proof}
			We will also need the following algebraic inequality:
			\begin{lem}
				\label{lem4.2}
				Let $a,b\in \R$. Then,
				$\displaystyle
				(a^n-b^n)^2\leq n(a-b)(a^{2n-1}-b^{2n-1}).
				$
			\end{lem}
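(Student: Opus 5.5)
The plan is a one-line Cauchy--Schwarz argument on the integral representation of $a^n-b^n$. It actually gives the better constant $\frac{n^2}{2n-1}\le n$, and it works for all real $a,b$ and every integer $n\ge1$, with no parity assumption.

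First I dispose of the trivial case $a=b$, where both sides vanish. Then I set $m:=\min\{a,b\}$ and $M:=\max\{a,b\}$ and use the fundamental theorem of calculus:
$$
|a^n-b^n|=\left|\int_m^M n x^{n-1}\,dx\right|.
$$
Applying Cauchy--Schwarz on the interval $[m,M]$ gives
\begin{align*}
(a^n-b^n)^2
&\le (M-m)\int_m^M n^2 x^{2n-2}\,dx \\
&= \frac{n^2}{2n-1}\,(M-m)\bigl(M^{2n-1}-m^{2n-1}\bigr).
\end{align*}
Here the integrand $x^{2n-2}$ is nonnegative on all of $\R$, because $2n-2$ is even. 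This is what lets the argument run with no sign restrictions on $a$ and $b$.

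Next I remove the ordering. Since $2n-1$ is odd, the map $x\mapsto x^{2n-1}$ is increasing on $\R$. Hence $a-b$ and $a^{2n-1}-b^{2n-1}$ always have the same sign, and
$$
(M-m)\bigl(M^{2n-1}-m^{2n-1}\bigr)=(a-b)\bigl(a^{2n-1}-b^{2n-1}\bigr)\ge 0.
$$

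Finally I compare constants. For $n\ge1$ we have $n^2\le n(2n-1)$, since this is equivalent to $n\le 2n-1$. So $\frac{n^2}{2n-1}\le n$, which yields
$$
(a^n-b^n)^2\le n(a-b)\bigl(a^{2n-1}-b^{2n-1}\bigr).
$$

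I do not expect a real obstacle. The only point that needs care is the sign bookkeeping when $a<b$ or when $a$ and $b$ have opposite signs, and it is handled by the evenness of $2n-2$ together with the monotonicity of $x^{2n-1}$.
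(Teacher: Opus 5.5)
Your proof is correct and follows essentially the same route as the paper. The paper writes $a^n-b^n=n(a-b)\int_0^1(\theta a+(1-\theta)b)^{n-1}\,d\theta$, applies Cauchy--Schwarz to obtain the factor $\frac{n^2}{2n-1}(a-b)(a^{2n-1}-b^{2n-1})$, and then bounds $\frac{n^2}{2n-1}$ by $n$. Your integral over $[m,M]$ is the same representation after a linear change of variables, and your explicit check that $(a-b)(a^{2n-1}-b^{2n-1})\ge 0$ spells out a sign step the paper leaves implicit.
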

			\begin{proof}
				If $a=b$, the claim is immediate. Now observe that
				\begin{equation*}
					a^n-b^n = n(a-b)\int_0^1(\theta a +(1-\theta)b)^{n-1}d \theta.
				\end{equation*}
				Squaring both sides and applying Cauchy--Schwarz inequality, we obtain 
				\begin{align*}
					(a^n-b^n)^2&\leq n^2(a-b)^2\int_0^1(\theta a +(1-\theta)b)^{2n-2}d \theta = n^2(a-b)^2\left[\frac{a^{2n-1}-b^{2n-1}}{(2n-1)(a-b)}\right]\\
					&\leq n(a-b)(a^{2n-1}-b^{2n-1}),
				\end{align*} as claimed.
			\end{proof}
			Returning to $I_1$ and applying Lemmas \ref{lem4.2} and \ref{lem4.1} as well as the fact that $|G|\leq 1$, we get
			\begin{align*}
				0\leq I_1
				&\leq n\int\!\!\iint
				\psi(t)^2
				\frac{(G(t,x)-G(s,x))(G(t,x)^{2n-1}-G(s,x)^{2n-1})}{|t-s|^2}
				\,dt\,ds\,dx \\
				&\lesssim n\bigg[
				\left|
				\int
				\langle \psi^2 G(\cdot,x)^{2n-1},\Lambda_tG(\cdot,x)\rangle\,dx
				\right| \\
				&\hspace{1.6cm}
				+
				\int\left|
				\iint
				G(t,x)^{2n-1}
				\frac{(G(t,x)-G(s,x))(\psi(t)^2-\psi(s)^2)}{|t-s|^2}
				\,dt\,ds
				\right|dx
				\bigg].
			\end{align*}
			Here $\Lambda_t$ denotes the operator $\Lambda$ acting on the $t$-variable. The second term above is not difficult to estimate. Indeed, notice that
			\begin{align}
				\bigg\rvert \iint_{\R^2} G(t,x)^{2n-1}&\frac{(G(t,x)-G(s,x))(\psi(t)^2-\psi(s)^2)}{|t-s|^2}d t d s \bigg\rvert \nonumber \\
				&\leq \int_{\R} |G(t,x)|\bigg( \int_{\R} \frac{|G(t,x)-G(s,x)||\psi(t)^2-\psi(s)^2|}{|t-s|^2}d s  \bigg)d t. \label{eq4.3}
			\end{align}
			The following lemma deals with the inner integral.
			\begin{lem}
				\label{lem4.3}
				Fix $x\in \mathbb{R}$. Then, there exists a constant $C>0$, depending only on $\psi$, such that for every $t\in \mathbb{R}$
				\begin{equation*}
					\int_{\R} \frac{|G(t,x)-G(s,x)||\psi(t)^2-\psi(s)^2|}{|t-s|^2}\,ds \leq \frac{C}{1+t^2}.
				\end{equation*}
			\end{lem}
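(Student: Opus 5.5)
We bound the integrand using only three facts: $|G|\le 1$; $G(\cdot,x)$ is Lipschitz in $t$ with a constant that is harmless at distance from $0$; and the pointwise properties of $\psi$ ($\psi\equiv 1$ on $[-2,2]$, $\operatorname{supp}\psi\subset[-3,3]$, $0\le\psi\le1$). Write $\Psi:=\psi^2$. Then $\Psi$ is smooth, compactly supported in $[-3,3]$, and satisfies $|\Psi(t)-\Psi(s)|\le \min\{\|\Psi'\|_\infty|t-s|,1\}$.

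\emph{Case $|t|\le 6$.} The target is to show the integral is $\lesssim 1$. Split the $s$-integral into $|t-s|\le 1$ and $|t-s|>1$.
\begin{itemize}
\item On $|t-s|>1$, use $|G(t,x)-G(s,x)|\le 2$ and $|\Psi(t)-\Psi(s)|\le 1$. This gives $\int_{|t-s|>1}2|t-s|^{-2}\,ds\lesssim 1$.
\item On $|t-s|\le 1$, use $|\Psi(t)-\Psi(s)|\le\|\Psi'\|_\infty|t-s|$. The integrand is then bounded by $\|\Psi'\|_\infty|G(t,x)-G(s,x)|/|t-s|$, and we need $\int_{|t-s|\le1}|G(t,x)-G(s,x)|\,|t-s|^{-1}ds\lesssim 1$.
\end{itemize}
Here the main difficulty appears: $G(\cdot,x)$ need not be Lipschitz near $t=0$, since $\partial_tG=(A'(x+t)-G(t,x))/t$ can be as large as $2/|t|$.

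The plan is to use a further feature of $\Psi$: since $\Psi\equiv1$ on $[-2,2]$, we have $\Psi(t)-\Psi(s)=0$ whenever $|t|,|s|\le 2$. So if the factor $\Psi(t)-\Psi(s)$ is nonzero, at least one of $|t|,|s|$ exceeds $2$. Together with $|t-s|\le1$, this forces both $|t|,|s|\ge1$. On the segment between them, $|\partial_\sigma G(\sigma,x)|\le 2/|\sigma|\le 2$, by the bound \eqref{cota} (which in fact holds for all $\sigma\ne0$ with constant $2$, because $|A'|\le1$ and $|G|\le1$). Hence $|G(t,x)-G(s,x)|\le 2|t-s|$ there, and the local part is bounded by $2\|\Psi'\|_\infty\int_{|t-s|\le1}ds\lesssim 1$.

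\emph{Case $|t|>6$.} Here $\Psi(t)=0$, so only $s\in[-3,3]$ contributes, and for such $s$ we have $|t-s|\ge |t|/2$. Bound $|\Psi(s)|\le1$ and $|G(t,x)-G(s,x)|\le2$. Integrating over an $s$-set of length $6$ gives at most $6\cdot 2\cdot 4/t^2\lesssim 1/t^2$.

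Combining the two cases yields the bound $C/(1+t^2)$, with $C$ depending only on $\|\psi'\|_\infty$ and uniform in $x$ and $n$. The only delicate point is the localisation argument above, which avoids the singular behaviour of $\partial_tG$ near $t=0$.
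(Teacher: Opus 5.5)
Your proof is correct, but it organizes the estimate differently from the paper. The paper splits into four ranges ($|t|\le 2$, $2\le|t|\le 3$, $3\le|t|\le 4$, $|t|>4$). It uses the Lipschitz bound \eqref{cota} for $G(\cdot,x)$ only in the transition range $2\le|t|\le3$. In the ranges $|t|\le2$ and $3\le|t|\le4$ it uses nothing about $G$ beyond $|G|\le 1$. There it absorbs the singularity $|t-s|^{-2}$ through the second-order flatness of $\psi^2$ at the transition points (Taylor: $|1-\psi(s)^2|\lesssim(s\mp2)^2$ near $\pm2$, and $\psi(s)^2\lesssim(3\mp s)^2$ near $\pm3$), so its constant also involves $\|(\psi^2)''\|_\infty$.

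You replace this case analysis by a single observation. If $\psi(t)^2\neq\psi(s)^2$ and $|t-s|\le1$, then one endpoint lies outside $[-2,2]$. Hence the whole segment between $t$ and $s$ lies in $\{|\sigma|>1\}$, where $|\partial_\sigma G|\le 2/|\sigma|<2$. The product of the two first-order bounds then cancels $|t-s|^{-2}$ uniformly for all $|t|\le 6$. This is shorter and gives a constant depending only on $\|\psi'\|_\infty$. The price is that you use the $t$-regularity of $G$ in every local region, whereas the paper's Taylor device shows that outside the layer $2\le|t|\le3$ no such regularity is needed.

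One point of wording: the integral $\int_{|t-s|\le1}|G(t,x)-G(s,x)|\,|t-s|^{-1}\,ds$ over the full window is not bounded uniformly as $t\to0$ in general. The bound you actually need and prove is the one restricted to the set where $\psi(t)^2\neq\psi(s)^2$, which your argument handles correctly.
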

			
			\begin{proof}
				Set $u(t):=G(t,x)$ and $\phi:=\psi^2$. By definition $|u|\leq 1$ and also recall that, by \eqref{cota}, for $|t|>1$, we have $|u'(t)|\lesssim |t|^{-1}$. This implies, in particular, that if $s,t\in\mathbb R\setminus(-1,1)$ and
				$|s-t|<1$, then
				$
				|u(t)-u(s)|\lesssim |t-s|.
				$
				
				As $\phi\equiv 1$ in $[-2,2]$, we have
				\begin{equation*}
					\phi(2)=1, \quad \phi'(2)=0, \qquad \phi(-2)=1, \quad \phi'(-2)=0.
				\end{equation*} 
				Moreover, since $\operatorname{supp}(\phi)\subset [-3,3]$, we also have $\phi(3)=\phi'(3)=\phi(-3)=\phi'(-3)=0$.
				
				We estimate the integral by considering 4 regions: $|t|\leq 2$, $2\leq |t| \leq 3$, $3\leq |t| \leq 4$ and $|t|>4$. We begin with the first one. Fix $t\in[-2,2]$, so that $\phi(t)=1$. In this case the integral becomes
				\begin{equation*}
					\int_{2<|s|<3} \frac{|1-\phi(s)||u(t)-u(s)|}{|t-s|^2}\,ds
					+
					\int_{|s|>3} \frac{|u(t)-u(s)|}{|t-s|^2}\,ds.
				\end{equation*}
				By Taylor's theorem, $|1-\phi(s)|\lesssim (s-2)^2$ as well as $|1-\phi(s)|\lesssim (s+2)^2$. Therefore, since $|u(t)-u(s)|\leq 2$ we get
				\begin{align*}
					\int_{2<|s|<3} \frac{|1-\phi(s)||u(t)-u(s)|}{|t-s|^2}\,ds\lesssim 1.
				\end{align*}
				Since $|u(t)-u(s)|\leq 2$ and $|t|\leq 2$, we also have
				\begin{align*}
					\int_{|s|>3} \frac{|u(t)-u(s)|}{|t-s|^2}\,ds
					\leq
					2\int_{|s|>3} \frac{ds}{|t-s|^2}
					\lesssim 1.
				\end{align*}
				
				Now fix $2\leq |t| \leq 3$. We assume that $t\in (2,3)$, since the case $t\in (-3,-2)$ is treated analogously. We split the integral as follows:
				\begin{equation*}
					\int_{|s-t|<1}\frac{|\phi(t)-\phi(s)||u(t)-u(s)|}{|t-s|^2}\,ds
					+
					\int_{|s-t|\geq 1}\frac{|\phi(t)-\phi(s)||u(t)-u(s)|}{|t-s|^2}\,ds.
				\end{equation*}
				If $|s-t|<1$ and $t\in (2,3)$, then $s\in (1,4) \subset \R\setminus(-1,1)$. This implies $|u(t)-u(s)|\lesssim|t-s|$. Moreover $
				|\phi(t)-\phi(s)|\lesssim |t-s|$. Therefore
				\begin{equation*}
					\int_{|s-t|<1}\frac{|\phi(t)-\phi(s)||u(t)-u(s)|}{|t-s|^2}\,ds
					\lesssim 1.
				\end{equation*}
				The study of the second term is also straightforward:
				\begin{equation*}
					\int_{|s-t|\geq 1}\frac{|\phi(t)-\phi(s)||u(t)-u(s)|}{|t-s|^2}\,ds
					\lesssim\int_{|s-t|\geq 1} \frac{ds}{|t-s|^2}
					\lesssim 1.
				\end{equation*}
				So in this case the integral is again $\lesssim 1$.
				
				Next fix $3\leq |t|\leq 4$. We assume that $t\in(3,4)$, the negative
				case being analogous. Since $\phi(t)=0$, we write
				$$
				\int_{\mathbb R}
				\frac{|u(t)-u(s)|\,|\phi(t)-\phi(s)|}{|t-s|^2}\,ds
				=
				\int_{-3}^{2}
				\frac{|\phi(s)|\,|u(t)-u(s)|}{|t-s|^2}\,ds
				+
				\int_{2}^{3}
				\frac{|\phi(s)|\,|u(t)-u(s)|}{|t-s|^2}\,ds.
				$$
				The first integral is bounded because its integrand has no singularity.
				For the second one, Taylor's theorem gives
				$$
				|\phi(s)|\lesssim (3-s)^2,\qquad s\in(2,3),
				$$
				and therefore, using $|u(t)-u(s)|\leq 2$,
				$$
				\int_{2}^{3}
				\frac{|\phi(s)|\,|u(t)-u(s)|}{|t-s|^2}\,ds
				\lesssim 1.
				$$
				Thus the integral is $\lesssim 1$ for $3\leq |t|\leq 4$. Combining the previous cases, for $|t|\leq 4$ we have
				$$
				\int_{\mathbb R}
				\frac{|G(t,x)-G(s,x)|\,|\psi(t)^2-\psi(s)^2|}{|t-s|^2}\,ds
				\lesssim 1.
				$$
				Since $|t|\leq 4$, we have $(1+t^2)^{-1}\gtrsim 1$, and hence
				$$
				\int_{\mathbb R}
				\frac{|G(t,x)-G(s,x)|\,|\psi(t)^2-\psi(s)^2|}{|t-s|^2}\,ds
				\lesssim
				\frac{1}{1+t^2}.
				$$
				Finally, let $|t|>4$. Then $\phi(t)=0$ and $\operatorname{supp}\phi\subset[-3,3]$.
				Thus
				$$
				\int_{\mathbb R}
				\frac{|u(t)-u(s)|\,|\phi(t)-\phi(s)|}{|t-s|^2}\,ds
				=
				\int_{-3}^{3}
				\frac{|\phi(s)|\,|u(t)-u(s)|}{|t-s|^2}\,ds.
				$$
				Using $|u|\leq 1$ and $|t-s|\gtrsim |t|$ for $s\in[-3,3]$, we get
				$$
				\int_{-3}^{3}
				\frac{|\phi(s)|\,|u(t)-u(s)|}{|t-s|^2}\,ds
				\lesssim
				\frac{1}{t^2}
				\lesssim
				\frac{1}{1+t^2}.
				$$
				The implicit constants depend only on $\psi$, and the proof follows.
			\end{proof}
			
			Returning to \eqref{eq4.3}, we obtain
			$$
			\left|
			\iint_{\mathbb R^2} G(t,x)^{2n-1}
			\frac{(G(t,x)-G(s,x))(\psi(t)^2-\psi(s)^2)}{|t-s|^2}
			\,dt\,ds
			\right|
			\le
			C\int_{\mathbb R}\frac{|G(t,x)|}{1+t^2}\,dt.
			$$
			Since \(|G(t,x)|\le1\) and, for each fixed \(t\),
			$
			|\operatorname{supp}_x G(t,\cdot)|\le2,
			$
			we have
			$
			\|G(t,\cdot)\|_{L^1_x}\le2.
			$
			Integrating over \(x\), the right-hand side is therefore bounded by
			$$
			\int_{\mathbb R}\int_{\mathbb R}\frac{|G(t,x)|}{1+t^2}\,dt\,dx
			=
			\int_{\mathbb R}\frac{\|G(t,\cdot)\|_{L^1_x}}{1+t^2}\,dt
			\lesssim1.
			$$
			Therefore, all in all, we have obtained
			\begin{equation}
				\label{eq4.4}
				\|\partial_t^{\frac{1}{2}} F_n\|_{L^2}^2 \lesssim n \left[ 1+\bigg\rvert \int_{\R} \langle \psi^2 G(\cdot, x)^{2n-1}, \Lambda_t G(\cdot,x) \rangle d x \bigg\rvert \right] \leq n\left[ 1+\|\psi^2(t) \Lambda_t G\|_{L^1(\R^2)} \right].
			\end{equation}
			
			Thus it remains to estimate $\|\psi^2(t)\Lambda_t G\|_{L^1}$.
			
			To this end, recall that $G(t,x):=\int_0^1 A'(x+\theta t) d \theta$. We use the Littlewood-Paley decomposition 
			$\displaystyle
			A' = \Delta_{-1}A'+\sum_{j\geq 0} \Delta_j A'
			$ defined in \eqref{norm_Besov}.
			We also choose $\widetilde{\varphi}\in \mathcal{C}^\infty_c(\R\setminus{\{0\}})$ supported in $\{\frac{1}{4}\leq |\xi|\leq 4\}$ with $\widetilde{\varphi}|_{\text{supp}(\varphi)}\equiv 1$. We then have
			\begin{equation*}
				G(t,x)=g_{-1}(t,x)+\sum_{j\geq 0} g_j(t,x), \qquad \text{where} \quad g_j(t,x):=\int_0^1 \Delta_j A'(x+\theta t)d \theta.
			\end{equation*}
			For the moment we proceed formally; the estimates below justify the summation. 
			
			Taking the Fourier transform in the $x$-variable, we obtain
			\begin{equation*}
				\widehat{g_j}(t,\xi) = \int_0^1 \widehat{\Delta_j A'}(\xi)e^{2\pi i \theta t \xi} d \theta = \frac{e^{2\pi i t\xi}-1}{2\pi i t\xi}\widehat{\Delta_j A'}(\xi)=: m_0(t\xi) \widehat{\Delta_j A'}(\xi).
			\end{equation*}
			
			Now we apply the operator $\Lambda_t$, with Fourier symbol $|\tau|$, on both sides of the previous identity and obtain
			\begin{align*}
				\widehat{\Lambda_t g_j}(t,\xi)
				=
				\Lambda_t(m_0(t\xi))\widehat{\Delta_jA'}(\xi)
				=
				|\xi|\,(\Lambda m_0)(t\xi)\,\widehat{\Delta_j A'}(\xi)
				=
				|\xi|\,(\Lambda m_0)(t\xi)\,
				\widetilde\varphi(2^{-j}\xi)\widehat{\Delta_j A'}(\xi).
			\end{align*}
			Here the hat denotes the Fourier transform in the $x$ variable, while
			$\Lambda_t$ acts on $m_0(t\xi)$ as a function of $t$. This is justified by
			interpreting the Fourier transforms in the $L^2(\mathbb R)$ sense, in order to show that $\Lambda_t$ and the Fourier transform commute. Let us write $m_{j,t}(\xi):=|\xi|(\Lambda m_0)(t\xi)\widetilde{\varphi}(2^{-j}\xi)$, so that
			\begin{equation*}
				\widehat{\Lambda_t g_j}(t, \xi)=m_{j,t}(\xi)\widehat{\Delta_j A'}(\xi).
			\end{equation*}
			To estimate the associated multiplier norms, we use the following lemma:.
			%\begin{lem}
			%\label{lem4.4}
			%The function $\Lambda m_0$ is smooth and satisfies
			%\begin{equation*}
			%|(\Lambda m_0)^{(k)}(u)|\leq C_k \min{\bigg\{ 1,\frac{1}{|u|} \bigg\}}, \qquad \forall u\in \R, k\in \mathbb{Z}_{\geq 0},
			%\end{equation*}
			%where $C_k$ is an absolute constant that only depends on $k$.
			%\end{lem}
			%\begin{proof}
			%Since
			%$$
			%m_0(u)=\int_0^1 e^{2\pi i u \theta}\,d\theta,
			%$$
			%$\displaystyle\widehat{m_0}(\xi)=\chi_{[0,1]}(\xi)$. Therefore, the Fourier transform of \(\Lambda m_0\) is \(\xi \chi_{[0,1]}(\xi)\), and by the inversion formula we obtain
			%$$
			%\Lambda m_0(u)=\int_0^1 \xi e^{2\pi i u\xi}\,d\xi.
			%$$
			%Differentiating under the integral sign, we see that $\Lambda m_0 \in \mathcal{C}^\infty$. Moreover, for each $k\in \mathbb{Z}_{\geq 0}$ and every $u\in \R$ we have
			%\begin{align*}
			%|(\Lambda m_0)^{(k)}(u)| \leq \int_0^1 (2\pi)^{k}\xi^{k+1}d \xi = \frac{(2\pi)^{k}}{k+2}.
			%\end{align*}
			%On the other hand, integrating by parts yields for each $k\in \mathbb{Z}_{\geq 0}$ and  $u\neq 0$,
			%\begin{align*}
			%(2\pi i)^{k}\int_{0}^1 \xi^{k+1}e^{2\pi i \xi u}d \xi = \frac{(2\pi i)^{k-1}}{u}\bigg[ \xi^{k+1}e^{2\pi i \xi u}\Big\rvert_0^1 - (k+1)\int_0^1 \xi^k e^{2\pi i \xi u}d \xi \bigg].
			%\end{align*}
			%Therefore,
			%\begin{equation*}
			%|(\Lambda m_0)^{(k)}(u)| \leq \frac{(2\pi)^k}{|u|}\bigg[ 1+\int_0^1 (k+1)\xi^{k}d \xi \bigg]\leq 2\frac{(2\pi)^k}{|u|},
			%\end{equation*}
			%so choosing $C_k:=2(2\pi)^k$ we are done.
			%\end{proof}
			\begin{lem}
				\label{lem7.5}
				Let $j\geq -1$. For every $t\in\mathbb R$,
				\[
				\|\Lambda_t g_j(t,\cdot)\|_{L^1_x}
				\le
				C\bigl(1+\min\{2^j,|t|^{-1}\}\bigr)\|\Delta_jA'\|_{L^1}.
				\]
			\end{lem}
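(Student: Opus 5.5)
The plan is to treat $\Lambda_t g_j(t,\cdot)$, for fixed $t$, as the Fourier multiplier $m_{j,t}(\xi)=|\xi|(\Lambda m_0)(t\xi)\widetilde\varphi(2^{-j}\xi)$ acting on $\Delta_jA'$. By Young's inequality it then suffices to show
\[
\|\mathcal F^{-1}m_{j,t}\|_{L^1}\le C\bigl(1+\min\{2^j,|t|^{-1}\}\bigr).
\]
(For $j=-1$ we use a compactly supported cutoff $\widetilde\varphi_{-1}$ equal to $1$ on the low-frequency support, in place of $\widetilde\varphi(2^{-j}\cdot)$.)

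\textbf{Step 1: properties of $\Lambda m_0$.} Since $m_0(u)=\int_0^1e^{2\pi i u\theta}\,d\theta$, its Fourier transform is $\chi_{[0,1]}$ up to sign convention. Hence, up to constants,
\[
\Lambda m_0(u)=\int_0^1\theta\, e^{2\pi i u\theta}\,d\theta ,
\]
which is smooth with bounded derivatives. Integrating by parts once gives
\[
\Lambda m_0(u)=\frac{e^{2\pi i u}}{2\pi i u}+O(|u|^{-2}), \qquad |u|\ge 1 .
\]
Each derivative keeps the same structure: a main term $e^{2\pi i u}/u$ times a smooth symbol of order $0$, plus a remainder of order $|u|^{-2}$. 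The oscillating factor $e^{2\pi i t\xi}$ in $\xi$ is harmless, since it only translates the kernel by $t$ and does not affect the $L^1$ norm.

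\textbf{Step 2: rescaling.} Write $\xi=2^j\eta$. The kernel of $m_{j,t}$ has the same $L^1$ norm as the kernel of
\[
M(\eta)=2^j|\eta|\,(\Lambda m_0)(2^jt\eta)\,\widetilde\varphi(\eta),
\]
with $|\eta|\sim 1$ on the support. Set $s:=2^j|t|$ and split into two cases.

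\emph{Case $s\le1$.} Here $u\mapsto(\Lambda m_0)(s u)$ on $|u|\le 4$ has all derivatives bounded uniformly. Then $M$ is a smooth bump with $C^k$ norms $\lesssim 2^j$, so its kernel has $L^1$ norm $\lesssim 2^j=\min\{2^j,|t|^{-1}\}$.

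\emph{Case $s>1$.} Write $(\Lambda m_0)(s\eta)=e^{2\pi i s\eta}\,\frac{1}{2\pi i s\eta}+R(s\eta)$. The main term equals $e^{2\pi i s\eta}$ times a bump of size $2^j/s=|t|^{-1}$ with bounded derivatives. Its kernel is a translate (by $s$) of an $L^1$ function of norm $\lesssim |t|^{-1}=\min\{2^j,|t|^{-1}\}$.

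\textbf{Step 3: the remainder (main obstacle).} The remainder $R(s\eta)$ is of size $s^{-2}$, but its $\eta$-derivatives involve the factors $e^{2\pi i s\eta}$ and therefore grow like powers of $s$. So the naive bump estimate fails. The fix is to iterate the integration by parts in $\Lambda m_0$. This gives a finite expansion
\[
\Lambda m_0(u)=\sum_{k} c_k\,u^{-k}e^{2\pi iu}+c\,u^{-2},
\]
where the non-oscillating term $u^{-2}$ comes from the boundary term at $\theta=0$. In fact $\Lambda m_0(u)=\frac{e^{2\pi iu}}{2\pi iu}+\frac{e^{2\pi iu}-1}{4\pi^2u^2}$ exactly. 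Every oscillatory piece is a modulation by $e^{2\pi i s\eta}$ of a harmless bump. The non-oscillating piece contributes $2^j s^{-2}|\eta|^{-1}\widetilde\varphi$, of size $\le 2^j/s\le |t|^{-1}$. Both pieces therefore have kernels of $L^1$ norm $\lesssim \min\{2^j,|t|^{-1}\}$, which is within the claimed bound. The additive constant $1$ in the statement absorbs the $j=-1$ term and any bounded contributions.
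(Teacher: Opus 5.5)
\textbf{Case $j\ge 0$: correct, different route.} For $j\ge 0$ your argument is correct, and it is essentially the Fourier-side dual of the paper's proof. The paper never writes $\Lambda m_0$ in closed form. Instead it expresses the kernel as a weighted average of translates,
$K_{j,t}(x)=\int_0^1\theta\,k_j(x+t\theta)\,d\theta$ with $k_j=\mathcal F^{-1}\bigl(|\xi|\widetilde\varphi(2^{-j}\xi)\bigr)$, which gives $\|K_{j,t}\|_{L^1}\le\frac12\|k_j\|_{L^1}\lesssim 2^j$ for every $t$. It then writes $k_j=\ell_j'$ with $\|\ell_j\|_{L^1}\lesssim 1$ and integrates by parts in $\theta$ to get $K_{j,t}(x)=t^{-1}\ell_j(x+t)-t^{-2}\int_x^{x+t}\ell_j(u)\,du$. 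Fubini bounds this by $2|t|^{-1}\|\ell_j\|_{L^1}$ for every $t\neq 0$. Taking Fourier transforms of that identity reproduces exactly your formula $\Lambda m_0(u)=\frac{e^{2\pi iu}}{2\pi iu}+\frac{e^{2\pi iu}-1}{4\pi^2u^2}$, multiplied by $|\xi|\widetilde\varphi(2^{-j}\xi)$.

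The difference is in how the second term is handled. The paper estimates the difference quotient $t^{-2}\int_x^{x+t}\ell_j$ as a whole, so both bounds hold for all $t$ and one simply takes the minimum. You split $e^{2\pi iu}-1$ into an oscillating and a non-oscillating piece, each of size $2^j/s^2$ with $s=2^j|t|$. That is acceptable only for $s>1$, which is why you need the separate smooth-symbol argument for $s\le 1$. Both routes are fine for $j\ge0$. The paper's version needs only $k_j,\ell_j\in L^1$ and no symbol estimates.

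\textbf{Gap: the case $j=-1$.} This case is part of the statement, but you only assert it (``the additive constant $1$ absorbs the $j=-1$ term''), and none of your steps applies there. With the cutoff $\widetilde\varphi_{-1}$ the frequency support contains $\xi=0$, so the rescaling to $|\eta|\sim 1$ is unavailable. Worse, your exact expansion breaks the smooth symbol $|\xi|\widetilde\varphi_{-1}(\xi)(\Lambda m_0)(t\xi)$ into pieces that are singular at the origin:
\begin{itemize}
\item $e^{2\pi i t\xi}\frac{\operatorname{sgn}\xi}{2\pi i t}\widetilde\varphi_{-1}(\xi)$, whose kernel decays only like $|x|^{-1}$ and is not in $L^1$;
\item $\frac{\widetilde\varphi_{-1}(\xi)}{|\xi|}$, which is not even locally integrable.
\end{itemize}
So the term-by-term bound fails, and uniform boundedness in $t$ needs an actual argument.

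\textbf{How to close it.} The quickest fix is the paper's. The kernel is $\int_0^1\theta\,k_{-1}(x+t\theta)\,d\theta$ with $k_{-1}=\mathcal F^{-1}\bigl(|\xi|\widetilde\varphi_{-1}(\xi)\bigr)$. The corner of $|\xi|$ at $0$ gives $|k_{-1}(x)|\lesssim\min\{1,x^{-2}\}$, so $k_{-1}\in L^1$ and $\|K_{-1,t}\|_{L^1}\le\frac12\|k_{-1}\|_{L^1}$ uniformly in $t$.

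Alternatively, you can stay within your own method. Split $\widetilde\varphi_{-1}$ into dyadic pieces at scales $2^k$, $k\le 0$, and run your $j\ge 0$ argument at each scale; it works at any dyadic scale. Summing $\sum_{k\le 0}\min\{2^k,|t|^{-1}\}\lesssim 1$ then gives the uniform bound.
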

			\begin{proof}
				We first treat the case $j\geq0$. Fix $j$ and define the Fourier multiplier operator $T_{m_{j,t}}$ by
				$$
				\widehat{T_{m_{j,t}}f}(\xi)
				=m_{j,t}(\xi)\widehat f(\xi),
				$$
				where $
				m_{j,t}(\xi)
				:=|\xi|\widetilde{\varphi}(2^{-j}\xi)
				(\Lambda m_0)(t\xi).
				$
				Let $K_{j,t}:=\widecheck{m_{j,t}}$ be the convolution kernel of $T_{m_{j,t}}$. Notice that, by construction,
				$$
				\Lambda_t g_j(t,\cdot)
				=
				T_{m_{j,t}}(\Delta_jA').
				$$
				Then, by Young's inequality,
				$$
				\|\Lambda_t g_j(t,\cdot)\|_{L^1_x}
				\le
				\|K_{j,t}\|_{L^1}\|\Delta_jA'\|_{L^1}.
				$$
				Thus, it suffices to estimate $\|K_{j,t}\|_{L^1}$. Since by definition
				$$
				m_0(u)=\int_0^1 e^{2\pi i u \theta}\,d\theta,
				$$
				$\displaystyle\widehat{m_0}(\xi)=\chi_{[0,1]}(\xi)$. Therefore, the Fourier transform of \(\Lambda m_0\) is \(\xi \chi_{[0,1]}(\xi)\), and by the inversion formula we obtain
				$$
				\Lambda m_0(u)=\int_0^1 \theta e^{2\pi i u\theta}\,d\theta.
				$$
				
				Thus, the multiplier may be written as
				\[
				m_{j,t}(\xi)
				= |\xi|\widetilde{\varphi}(2^{-j}\xi)
				\int_0^1 \theta e^{2\pi i t\xi\theta}\,d\theta.
				\]
				Now set
				$\displaystyle
				\eta_j(\xi):=|\xi|\widetilde{\varphi}(2^{-j}\xi)$ and 
				$k_j:=\widecheck{\eta_j}.$
				Observe that by definition of $K_{j,t}$,
				\begin{align*}
					K_{j,t}(x)
					&=\int_{\R} e^{2\pi i x\xi}\eta_j(\xi)
					\int_0^1 \theta e^{2\pi i t\xi\theta}\,d\theta\,d\xi =\int_0^1 \theta
					\int_{\R} e^{2\pi i (x+t\theta)\xi}\eta_j(\xi)\,d\xi\,d\theta =\int_0^1 \theta k_j(x+t\theta)\,d\theta.
				\end{align*}
				
				To study $k_j$, we define $a(u):=|u|\widetilde{\varphi}(u)$, so that $ \eta_j(\xi)=2^j a(2^{-j}\xi)$. By the scaling rule for the inverse Fourier transform,
				\[
				k_j(x)=\widecheck{\eta_j}(x)=2^{2j}\widecheck{a}(2^j x).
				\]
				Therefore
				\[
				\|k_j\|_{L^1}
				=\int_{\R}2^{2j}|\widecheck{a}(2^j x)|\,dx
				=2^j\|\widecheck{a}\|_{L^1}.
				\]
				Since $a\in \mathcal{C}_c^\infty(\R)$, we have $\widecheck a\in\mathcal S(\R)$, and therefore $\|\widecheck a\|_{L^1}<\infty$. Consequently,
				\[
				\|k_j\|_{L^1}\lesssim 2^j.
				\]
				Using the representation of $K_{j,t}$,
				\begin{align*}
					\|K_{j,t}\|_{L^1}
					&\leq \int_0^1 \theta \|k_j(\cdot+t\theta)\|_{L^1}\,d\theta =\left(\int_0^1\theta\,d\theta\right)\|k_j\|_{L^1} \simeq 2^j.
				\end{align*}
				This proves the first bound, uniformly for all $t\in\R$, including $t=0$.
				
				To obtain the improved estimate for $|t|\neq 0$,  we write
				$k_j$ as a derivative of a kernel whose $L^1$ norm is independent of $j$.
				Define
				\[
				b_j(\xi)
				:=\frac{|\xi|}{2\pi i\xi}\widetilde{\varphi}(2^{-j}\xi)
				=\frac{\operatorname{sgn}\xi}{2\pi i}\widetilde{\varphi}(2^{-j}\xi).
				\]
				This is a smooth compactly supported function because
				$\widetilde{\varphi}(2^{-j}\xi)$ is supported away from $\xi=0$.
				Let $\ell_j:=\widecheck{b_j}$. Since $2\pi i\xi b_j(\xi)=|\xi|\widetilde{\varphi}(2^{-j}\xi)=\eta_j(\xi)$, we have
				\[
				k_j= \ell_j'.
				\]
				Now define
				\[
				b(\eta):=\frac{\operatorname{sgn}\eta}{2\pi i}\widetilde{\varphi}(\eta).
				\]
				Then $b_j(\xi)=b(2^{-j}\xi)$, so another application of the scaling rule gives $\ell_j(x)=2^j\widecheck b(2^j x)$. Therefore
				\[
				\|\ell_j\|_{L^1}
				=\int_{\R}2^j|\widecheck b(2^j x)|\,dx
				=\|\widecheck b\|_{L^1}.
				\]
				Since $b\in \mathcal{C}_c^\infty(\R)$, we have $\widecheck b\in\mathcal S(\R)$, hence $ \|\ell_j\|_{L^1}\lesssim 1$ uniformly in $j$. Using \(k_j=\ell_j'\), we can write
				\[
				K_{j,t}(x)
				=
				\int_0^1 \theta \ell_j'(x+t\theta)\,d\theta.
				\]
				
				Let us assume that \(t>0\) (the arguments for $t<0$ are analogous). For fixed \(x\), make the change of variables $u=x+t\theta$. Then, integration by parts yields
				\begin{align*}
					K_{j,t}(x)
					&=
					\frac1{t^2}\int_x^{x+t} (u-x)\ell_j'(u)\,du\\
					&= \frac1t\ell_j(x+t)
					-
					\frac1{t^2}\int_x^{x+t}\ell_j(u)\,du.
				\end{align*}
				We now estimate the \(L^1\)-norm. The first term satisfies
				\[
				\left\|\frac1t\ell_j(\cdot+t)\right\|_{L^1}
				=
				\frac1t\|\ell_j\|_{L^1}.
				\]
				For the second term, by Fubini's theorem,
				\[
				\begin{aligned}
					\int_{\R}
					\left|
					\frac1{t^2}\int_x^{x+t}\ell_j(u)\,du
					\right|dx
					&\leq
					\frac1{t^2}\int_{\R}\int_x^{x+t}|\ell_j(u)|\,du\,dx \\
					&=
					\frac1{t^2}\int_{\R}|\ell_j(u)|
					\bigl|\{x\in\R: x\leq u\leq x+t\}\bigr|\,du \\
					&=\frac{1}{t}\int_{\R}|\ell_j(u)|\,du =\frac{1}{t}\|\ell_j\|_{L^1}.
				\end{aligned}
				\]
				
				Combining the two estimates,
				\[
				\|K_{j,t}\|_{L^1}
				\leq
				\frac2t\|\ell_j\|_{L^1}
				\lesssim t^{-1}.
				\]
				
				Therefore, we have that $\|K_{j,t}\|_{L^1}
				\lesssim 2^j$ for all $t$, and that $\|K_{j,t}\|_{L^1}
				\lesssim |t|^{-1}$ for $t\neq 0.$ Hence
				$$
				\|\Lambda_t g_j(t,\cdot)\|_{L^1_x}
				\lesssim
				\min\{2^j,|t|^{-1}\}\|\Delta_jA'\|_{L^1}.
				$$
				This proves the desired estimate for $j\geq0$.
				
				For $j=-1$, the same representation holds with $\eta_{-1}(\xi)=|\xi|\varphi_{-1}(\xi)$. Its inverse Fourier transform belongs to $L^1(\mathbb R)$, since $\eta_{-1}$ is compactly supported and has only a Lipschitz singularity at the origin. Hence the associated kernel has uniformly bounded $L^1$ norm, and therefore $	\|\Lambda_t g_{-1}(t,\cdot)\|_{L^1_x}
				\lesssim
				\|\Delta_{-1}A'\|_{L^1}.$
			\end{proof}
			We now return to  \eqref{eq4.4}. For $j\geq 0$, multiplying by $\psi^2(t)$ and integrating in $t$, we get by Lemma \ref{lem7.5}
			\begin{align*}
				\|\psi^2(t)\Lambda_t g_j\|_{L^1}
				&\lesssim
				\|\Delta_j A'\|_{L^1}
				\int_{|t|\le3}
				\bigl(1+\min\{2^j,|t|^{-1}\}\bigr)\,dt \lesssim
				(1+j)\|\Delta_j A'\|_{L^1}.
			\end{align*}
			For $j=-1$ we simply have $\|\psi^2(t)\Lambda_t g_j\|_{L^1}\lesssim \|\Delta_{-1}A'\|_{L^1}\lesssim 1$, since $A'$ is supported in $[0,1]$ with $\|A'\|_\infty = 1$. Therefore, recalling definition \eqref{norm_Besov},
			$$
			\|\psi^2(t)\Lambda_t G\|_{L^1}
			\le
			\sum_{j\ge -1}
			\|\psi^2(t)\Lambda_t g_j\|_{L^1}
			\lesssim 1+
			\sum_{j\ge 0}
			(1+j)\|\Delta_j A'\|_{L^1}=
			\|A'\|_{B^{0,1}_{1,1}}.
			$$
			Returning to \eqref{eq4.4}, we conclude that
			$$
			\|\partial_t^{1/2}F_n\|_{L^2}^2
			\lesssim
			n\left(
			1+
			\|A'\|_{B^{0,1}_{1,1}}
			\right).
			$$
			Taking square roots and applying Lemma~\ref{3}, we obtain the estimate of Theorem~\ref{5}.
			
			\begin{rem}
				Let us prove that functions with support in $[0,1]$ that belong to the Sobolev spaces $H^s(\R), 0<s<1,$ also belong to $B^{0,1}_{1,1}(\R)$. To do so, we will use that $H^s(\R)$ can be identified with the Besov space $B^s_{2,2}(\R)$, that is endowed with the norm
				\[
				\left( \,\sum_{j\geq 0} 2^{2js}\|\Delta_j f\|_{L^2}^2 \right)^{1/2}.
				\]
				Indeed, simply notice that for a function $f$, the expression $\Delta_j f$ is given by a convolution against a Schwartz kernel $\Delta_j f = K_j\ast f$, with $K_j(x)=2^jK(2^jx)$. Then, by splitting the norm $\|\Delta_j f\|_{L^1}$ in $\|\Delta_j f\|_{L^1(-1,2)}$ and $\|\Delta_j f\|_{L^1(-1,2)^c}$, using that $\mathrm{supp}(f)\subset [0,1]$ and the rapid decay of $K$, it is not hard to prove that
				\[
				\|\Delta_j f\|_{L^1} \lesssim \|\Delta_j f\|_{L^2}+2^{-j}\|f\|_{L^2}.
				\]
				Then,
				\[
				\sum_{j\geq 0} (1+j)\|\Delta_j f\|_{L^1} \lesssim \sum_{j\geq 0} (1+j)\|\Delta_j f\|_{L^2} + \|f\|_{L^2}\sum_{j\geq 0}(1+j)2^{-j}.
				\]
				The second sum is finite. For the first, we simply use Cauchy-Schwarz and obtain
				\[
				\sum_{j\geq 0} (1+j)\|\Delta_j f\|_{L^2}\leq \left( \sum_{j\geq 0} (1+j)^22^{-2js} \right)^{1/2}\left( \sum_{j\geq 0} 2^{2js}\|\Delta_j f\|_{L^2} \right)^{1/2}.
				\]
				Since the first factor is finite for $s>0$, we deduce
				\[
				\sum_{j\geq 0} (1+j)\|\Delta_j f\|_{L^1}\lesssim_s \|f\|_{B^s_{2,2}}.
				\]
				Thus, for compactly supported functions, the condition
				\(A'\in B^{0,1}_{1,1}(\R)\) applies to a strictly larger class than the Sobolev condition appearing in Corollary~\ref{4}.
				Moreover, for compactly supported functions, the class is genuinely larger, since \(BV\) functions belong to \(B^{0,1}_{1,1}(\R)\). To see this, we introduce the Besov space $\textbf{B}^{0,1}_{1,1}(\R)$, defined via the norm
				\begin{equation*}
					\|f\|_{L^1}+\int_0^1 \frac{\omega_1(f,t)}{t}\log{\frac{e}{t}}dt,
				\end{equation*}
				where $\omega_1(f,t):=\sup_{|h|\leq t}\|f(\cdot+h)-f(\cdot)\|_{L^1}$. By \cite[Theorem 3.9]{CD2}, one has the embedding $\textbf{B}^{0,1}_{1,1}\hookrightarrow B^{0,1}_{1,1}$. Moreover, by \cite[Theorem 13.48]{L}, we also have
				\begin{equation*}
					\omega_1(f,t)\leq C\, t\|f\|_{BV}.
				\end{equation*}
				The previous bound is clear if one takes $f\in \mathcal{C}^1$ and identifies $\|f\|_{BV}$ with $\|f'\|_{L^1}$. The result for $BV$ found in \cite{L} follows, essentially, by an approximation argument. In any case, we get that for a function of bounded variation supported in $[0,1]$,
				\[
				\|f\|_{B^{0,1}_{1,1}}\lesssim \|f\|_{L^1}+\int_0^1 \frac{\omega_1(f,t)}{t}\log{\frac{e}{t}}dt \lesssim \|f\|_{L^1}+\|f\|_{BV},
				\]
				that is a finite quantity. Therefore, for compactly supported functions $BV\subset B^{0,1}_{1,1}$.
				
			\end{rem}
			\section{Incomparability of the regularity assumptions}
			
			\label{examples}
			In this section we show that the assumptions in Theorem~\ref{2} and Corollary~\ref{4} are incomparable by constructing two Lipschitz functions supported in $[0,1]$: one belongs to $\mathcal{C}^{1,\mathrm{Dini}}$ but does not satisfy $A'\in H^{1/2}(\mathbb R)$, while the other satisfies $A'\in H^{1/2}(\mathbb R)$ but does not belong to $\mathcal{C}^{1,\mathrm{Dini}}$. We choose the fractional parameter of the Sobolev space to be $1/2$ for simplicity. The arguments that follow can be adapted to obtain examples for $H^s(\R), 0<s\leq 1/2$.
			
			We begin with the first example. Define for $x\in[0,1]$ the function
			\begin{equation*}
				g_1(x):=\sum_{k=2}^\infty \frac{1}{k^3}\sin{(2\pi2^kx)},
			\end{equation*}
			as well as the function
			\begin{equation*}
				A_1(x):=\bigg(\int_0^x g_1(t)d t\bigg)\chi_{[0,1]}(x).
			\end{equation*}
			Since the series defining $g_1$ converges uniformly, we obtain that $g_1$ is continuous and it also satisfies $g_1(0)=g_1(1)=0$. Moreover, $A_1(0)=A_1(1)=0$ and $A_1'=g_1$ in $(0,1)$. Then, $A_1$ is in fact a $\mathcal{C}^1$ function supported in $[0,1]$. Let $0<h<\frac{1}{2}$ and take a positive integer $m$ with $2^{-m-1}<h<2^{-m}$. Then, for every $x$,
			\begin{equation*}
				|g_1(x+h)-g_1(x)|\leq \sum_{k=2}^\infty \frac{|\sin{(2\pi2^k(x+h))}-\sin{(2\pi2^kx)}|}{k^3}.
			\end{equation*}
			We distinguish whether $k\leq m$ or $k>m$. For the first case, by the mean value theorem,
			\begin{align*}
				\sum_{k\leq m} \frac{|\sin{(2\pi2^k(x+h))}-\sin{(2\pi2^kx)}|}{k^3} &\leq Ch\sum_{k\leq m} \frac{2^k}{k^3} = Ch \frac{2^m}{m^3} \sum_{k\leq m} 2^{k-m}\bigg(\frac{m}{k}\bigg)^3\\
				&\leq  Ch \frac{2^m}{m^3} < \frac{C}{m^3}.
			\end{align*}
			For $k>m$, we simply have
			\begin{equation*}
				\sum_{k > m} \frac{|\sin{(2\pi2^k(x+h))}-\sin{(2\pi2^kx)}|}{k^3} \leq \sum_{k>m}\frac{2}{k^3} \leq 2\int_m^\infty \frac{d x}{x^3} = \frac{1}{m^2}.
			\end{equation*}
			Therefore, if $\omega_{g_1}$ denotes the modulus of continuity of $g_1$, we get
			\begin{equation*}
				\omega_{g_1}(h)\lesssim \frac{1}{m^2} \simeq \frac{1}{\log(1/h)^2}, \quad \text{so that} \quad \int_0^{1/2} \frac{\omega_{g_1}(h)}{h}d h \lesssim \int_0^{1/2}\frac{d h}{h\log{(1/h)}^2}<\infty.
			\end{equation*}
			Hence $A_1\in \mathcal{C}^{1,\mathrm{Dini}}$. On the other hand, we show that the function $A'_1\notin H^{1/2}(\R)$. To do so, it suffices to prove that the following integral diverges:
			\begin{equation*}
				\int_{\R}|\xi||\widehat{A'_1}(\xi)|^2d \xi.
			\end{equation*}
			Define the function
			\begin{equation*}
				S(t):=e^{-\pi i t}\frac{\sin{(\pi t)}}{\pi t},
			\end{equation*}
			with $S(0):=1$. Using that the Fourier transform of $\chi_{[-\frac{1}{2},\frac{1}{2}]}(x)$ is given by $\frac{\sin(\pi \xi)}{\pi \xi}$, it is not hard to see that the Fourier transform of $\chi_{[0,1]}(x)e^{2\pi i mx}$ is given by $S(\xi -m)$. Therefore,
			\begin{align*}
				\widehat{A'_1}(\xi) =\left( \sum_{k=2}^\infty \frac{1}{k^3}\sin{(2\pi 2^k x)} \right)^{\wedge}(\xi) = \sum_{k=2}^\infty \frac{1}{2ik^3}\left(S(\xi-2^k)-S(\xi+2^k)\right).
			\end{align*}
			For every $m\geq 2$ we define the interval $I_m:=[2^m-\frac{1}{8}, 2^m+\frac{1}{8}]$. Observe that if $m\neq m'$, then $I_m\cap I_{m'}= \varnothing$. If $\xi\in I_m$, then $|\xi-2^m|\leq \frac{1}{8}$, so $|S(\xi-2^m)|\geq c$ for a certain constant $c$ (since the $\text{sinc}$ function is continuous and equals 1 at the origin) and $|S(\xi+2^m)|\leq \frac{C}{2^m}$, for another positive constant $C$. Therefore, for $m$ big enough and every $\xi\in I_m$, we get that the $m$-th term of the sum defining $\widehat{A'_1}$ satisfies
			\begin{equation*}
				\bigg\rvert \frac{1}{2i m^3}\left(S(\xi-2^m)-S(\xi+2^m)\right)\bigg\rvert \geq \frac{c_0}{m^3}.
			\end{equation*}
			for some $c_0>0$. Let now $k\neq m$ and $\xi\in I_m$. If $k<m$, then $|\xi-2^k|\gtrsim 2^m$, and if $k>m$, then $|\xi-2^k|\gtrsim 2^k$. Moreover, we always have $|\xi+2^k|\gtrsim 2^{\max\{k,m\}}$. Since $|S(t)|\leq \frac{2}{1+|t|}$, we deduce
			$$
			|S(\xi-2^k)|+|S(\xi+2^k)|\leq \begin{cases*}
				C2^{-m} & \text{if } $k < m$, \\
				C2^{-k}  & \text{if } $k > m$.
			\end{cases*}
			$$
			for some constant $C>0$. Therefore,
			\begin{equation*}
				\sum_{k\neq m} \frac{1}{k^3}\left( |S(\xi-2^k)|+|S(\xi+2^k)| \right) \leq C\bigg( 2^{-m} \sum_{k<m} \frac{1}{k^3} + \sum_{k > m} \frac{2^{-k}}{k^3} \bigg) \leq C_0 2^{-m},
			\end{equation*}
			for some $C_0>0$. Now, we choose $m_0$ big enough so that $C_02^{-m}\leq \frac{c_0}{2m^3}$ for every $m\geq m_0$. Hence, we conclude that for every $\xi\in I_m$ with $m\geq m_0$, there exists a constant $c_1>0$ so that 
			\begin{equation*}
				|\widehat{A'_1}(\xi)|\geq \frac{c_1}{m^3}.
			\end{equation*}
			Therefore,
			\begin{align*}
				\int_{\R}|\xi||\widehat{A'_1}(\xi)|^2d \xi \geq \sum_{m\geq m_0} \int_{I_m} |\xi||\widehat{A'_1}(\xi)|^2d \xi \geq \sum_{m\geq m_0} \left(2^m-\frac{1}{8}\right)|I_m|\frac{c_1^2}{m^6}=\infty,
			\end{align*}
			and we deduce $A_1'\not\in H^{1/2}(\R)$.
			
			\medskip
			Let us now construct a second function $A_2$ that satisfies $A_2'\in H^{1/2}(\mathbb R)\subset B^{0,1}_{1,1}(\R)$ but $A_2\notin \mathcal{C}^{1,\mathrm{Dini}}$. Define for every $x\in[0,1]$,
			\begin{equation*}
				g_2(x):=\sum_{k=2}^\infty \frac{\sin{(2\pi kx)}}{k\log{(k+1)}},
			\end{equation*}
			as well as
			\begin{equation*}
				A_2(x):=\bigg(\int_0^x g_2(t)d t\bigg)\chi_{[0,1]}(x).
			\end{equation*}
			As in the previous example, $g_2$ is continuous and $A_2$ is a $\mathcal{C}^1$ function supported in $[0,1]$, with $A_2'=g_2$ in $(0,1)$. 
			
			We first show that $A_2'\in H^{1/2}(\R)$. For this, we use the spectral definition of fractional Sobolev spaces via powers of the Dirichlet Laplacian. Namely, if
			\begin{equation*}
				u(x)=\bigg(\sum_{m\ge1} c_m \sin(m\pi x)\bigg)\chi_{[0,1]}(x),
			\end{equation*}
			then 
			\begin{equation}
				\label{eq8.1}
				u\in H^{1/2}(\mathbb{R}) \qquad \text{if and only if} \qquad \sum_{m=1}^\infty m|c_m|^2 <\infty.
			\end{equation}
			We will justify this equivalence after the construction of the example. For the moment, let us accept this characterization and set $u:=A'_2$. Since
			\begin{equation*}
				\sum_{k=2}^{\infty} 2k\bigg(\frac{1}{k\log{(k+1)}}\bigg)^2<\infty,
			\end{equation*}
			$A_2'\in H^{1/2}(\mathbb{R})$. We now show that $A_2\not\in \mathcal{C}^{1,\mathrm{Dini}}$. Indeed, take $h_N:=\frac{1}{8N}$, for some $N$ positive integer. Then, for $1\leq k \leq N$,
			\begin{equation*}
				0\leq 2\pi k h_N = \frac{\pi k}{4N}\leq \frac{\pi}{4}.
			\end{equation*}
			Since $\sin{u}\geq \frac{u}{2}$ for $0\leq u \leq \frac{\pi}{4}$, we get
			\begin{align*}
				g_2(h_N)=\sum_{k=2}^\infty \frac{\sin{(2\pi kh_N)}}{k\log{(k+1)}}\geq \sum_{k=2}^N \frac{\sin{(2\pi kh_N)}}{k\log{(k+1)}} \geq \pi h_N \sum_{k=2}^N \frac{1}{\log{(k+1)}}\geq Ch_N\frac{N}{\log{N}}\simeq \frac{1}{\log{N}}.
			\end{align*}
			Then, since $g_2(0)=0$, if $\omega_{g_2}$ denotes the modulus of continuity of $g_2$,
			\begin{equation*}
				\omega_{g_2}(h_N)\geq |g_2(h_N)-g_2(0)|\gtrsim \frac{1}{\log{N}}\simeq \frac{1}{\log{\frac{1}{h_N}}}.
			\end{equation*}
			Now choose $N:=2^j, j\geq 0,$  and set $h_j:=h_{2^{j}}=2^{-j-3}$. Then, $\omega_{g_2}(h_j)\gtrsim \frac{1}{j}$, and since $\omega_{g_2}$ is non-decreasing,
			\begin{equation*}
				\int_{h_{j+1}}^{h_j} \frac{\omega_{g_2}(h)}{h}d h \geq \omega_{g_2}(h_{j+1}) \log{\frac{h_j}{h_{j+1}}} \gtrsim \frac{1}{j}.
			\end{equation*}
			Therefore
			\begin{equation*}
				\int_0^{h_1} \frac{\omega_{g_2}(h)}{h} d h \gtrsim \sum_{j=1}^\infty \frac{1}{j} = \infty,
			\end{equation*}
			yielding that $A_2\not\in \mathcal{C}^{1,\mathrm{Dini}}$.
			
			\begin{rem}
				This last example also shows that $B^{0,1}_{1,1}(\R)\not\subset \mathcal{C}^{\mathrm{Dini}}(\R)$ for compactly supported functions. However, we have not been able to construct an example of a compactly supported function belonging to $\mathcal{C}^{\mathrm{Dini}}(\R)$ and not to $B^{0,1}_{1,1}(\R)$. 
			\end{rem}
			
			\subsection{Validation of identity \eqref{eq8.1}}
			
			Finally, we prove the following result:
			\begin{thm}
				\label{thm8.1}
				Let $v(x)=\sum_{m=1}^{\infty} c_m \sin(\pi m x)$ in $L^2(0,1)$, and let $u : \mathbb{R} \to \mathbb{C}$ be its zero-extension onto $\mathbb{R}$. Then $u \in H^{1/2}(\mathbb{R})$ if and only if
				\begin{equation*}
					\sum_{m=1}^{\infty} m|c_m|^2 < \infty.
				\end{equation*}
			\end{thm}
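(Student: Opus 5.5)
The plan is to use the Gagliardo (Slobodeckij) characterization of $H^{1/2}(\R)$: for $u\in L^2(\R)$,
\[
\|u\|_{H^{1/2}}^2\simeq \|u\|_{L^2}^2+\iint_{\R^2}\frac{|u(x)-u(y)|^2}{|x-y|^2}\,dx\,dy,
\]
which follows from Plancherel exactly as in the derivation of \eqref{identity}. Since $u$ vanishes outside $(0,1)$, the double integral splits into an interior part over $(0,1)^2$ plus the cross terms
\[
2\int_0^1|v(x)|^2\Big(\int_{\R\setminus(0,1)}\frac{dy}{|x-y|^2}\Big)dx=2\int_0^1|v(x)|^2\Big(\frac1x+\frac1{1-x}\Big)dx .
\]
Hence $u\in H^{1/2}(\R)$ if and only if $v\in H^{1/2}(0,1)$ and $\int_0^1 |v|^2/d(x)\,dx<\infty$, where $d(x)=\min\{x,1-x\}$. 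This is the space usually denoted $H^{1/2}_{00}(0,1)$ (Lions--Magenes). The task is then to show that
\[
\|v\|_{H^{1/2}_{00}}^2\simeq \sum_m m|c_m|^2 .
\]

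To compare with the sine coefficients, I would reduce to the periodic setting. Extend $v$ oddly to $(-1,1)$, obtaining $\tilde v(x)=\sum_m c_m\sin(\pi m x)$, and view $\tilde v$ as a $2$-periodic function. Its Fourier coefficients are $\pm c_m/(2i)$ at frequencies $\pm m$, so the classical periodic identity
\[
\int_{\mathbb T}\int_{\mathbb T}\frac{|\tilde v(x)-\tilde v(y)|^2}{|\sin(\pi(x-y)/2)|^2}\,dx\,dy\simeq\sum_m m|c_m|^2
\]
holds by Parseval, since $\sum_k |e^{i\pi k h}-1|^2/\sin^2(\pi h/2)$ integrates to $\simeq|k|$. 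It therefore suffices to prove
\[
\iint_{\mathbb T^2}\frac{|\tilde v(x)-\tilde v(y)|^2}{\sin^2(\pi(x-y)/2)}\,dx\,dy\;\simeq\;\iint_{(0,1)^2}\frac{|v(x)-v(y)|^2}{|x-y|^2}\,dx\,dy+\int_0^1\frac{|v|^2}{d(x)}\,dx .
\]

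The two pieces on the left correspond to pairs in the same half-period and pairs in opposite half-periods. For $x,y$ in the same half $(0,1)$, or in $(-1,0)$ after reflection, the kernel is comparable to $|x-y|^{-2}$ away from the periodic wrap-around, and near the wrap-around points $\pm1$ it is comparable to the squared distance to them. For $x\in(0,1)$ and $y=-y'$ with $y'\in(0,1)$, we have $|\tilde v(x)-\tilde v(y)|^2=|v(x)+v(y')|^2$ and the kernel is comparable to $(x+y')^{-2}+(2-x-y')^{-2}$.

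The lower bound $\gtrsim$ of the right-hand side is immediate for the difference term, since these are the same-half pairs. For the boundary term, I would use $|v(x)+v(y')|^2\ge \tfrac12|2v(x)|^2-|v(x)-v(y')|^2$ and integrate over $y'\in(0,x)$, where $(x+y')^{-2}\simeq x^{-2}$. This gives $\int|v|^2/x\,dx$ modulo the difference term, and the endpoint $1$ is handled symmetrically. For the upper bound, bound $|v(x)+v(y')|^2\le 2|v(x)|^2+2|v(y')|^2$ and integrate out one variable against $(x+y')^{-2}$, which yields $\int|v|^2/x$. The wrap-around regions are handled the same way.

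The main obstacle is this Hardy-type bookkeeping near the endpoints, namely controlling the mixed-sign pairs uniformly. I expect the inequality $|v(x)+v(y')|^2\ge 2|v(x)|^2-|v(x)-v(y')|^2$, restricted to $y'<x$, to suffice. A fallback is to cite the Lions--Magenes interpolation result $[L^2,H^1_0]_{1/2}=H^{1/2}_{00}$ together with the fact that the sine basis diagonalizes the Dirichlet Laplacian, with eigenvalues $\pi^2m^2$. Interpolation then gives the weight $m$ directly.
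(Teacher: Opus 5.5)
Your proof is correct, and it takes a genuinely different, self-contained route from the paper's.

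\textbf{The paper's route.} The paper argues entirely by citation. Lions--Magenes gives that the zero extension lies in $H^{1/2}(\R)$ if and only if $v\in H^{1/2}_{00}(0,1)$. It then uses $H^{1/2}_{00}(0,1)=[H^1_0(0,1),L^2(0,1)]_{1/2}=\mathrm{dom}(A^{1/4})$ for the Dirichlet Laplacian, whose sine-coefficient description is $\sum m|c_m|^2<\infty$. This is exactly your fallback.

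\textbf{Your route.} You replace the first citation by the exact computation $\int_{\R\setminus(0,1)}|x-y|^{-2}\,dy=\frac1x+\frac1{1-x}$. You replace the interpolation step by odd reflection to a $2$-periodic function. There, Tonelli, Parseval and the Fej\'er integral $\int_{-1}^{1}\sin^2(\pi kh/2)/\sin^2(\pi h/2)\,dh=2|k|$ make the periodic Gagliardo energy an exact multiple of $\sum m|c_m|^2$. This holds in $[0,\infty]$ with no a priori regularity needed.

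\textbf{The endpoint bookkeeping closes.} For $0<y'<x<1$ one has $\sin(\pi(x+y')/2)\ge\min\{x+y',2-x-y'\}\ge x-y'$, so the mixed-sign kernel is dominated by the same-half kernel. Write the inequality as $2|v(x)|^2\le|v(x)+v(y')|^2+|v(x)-v(y')|^2$, rather than subtracting, to avoid an $\infty-\infty$ issue. Integrating over $y'\in(0,x)$ against $\sin^{-2}(\pi(x+y')/2)\ge 4\pi^{-2}(x+y')^{-2}$ then gives $\int_0^1|v|^2/x$ bounded by a constant times the periodic energy. The range $y'\in(x,1)$ handles the endpoint $1$ in the same way. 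One small correction: same-half pairs never see the wrap-around singularity, since $|x-y|<1$ there. Only opposite-half pairs do, and your $(2-x-y')^{-2}$ term accounts for that.

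\textbf{Comparison.} Your route gives an elementary proof with explicit constants. It avoids both the identification of $\mathrm{dom}(A^{1/4})$ with a complex interpolation space, which the paper calls a definition but is really a theorem on fractional powers, and the Lions--Magenes characterization of $[H^1_0,L^2]_{1/2}$. The paper's route gives brevity and places the result in a standard framework that adapts directly to other Sobolev orders and other domains.
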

			To establish the result, let $I$ be an interval and begin by recalling the equivalent local characterization of the Sobolev norm $H^{1/2}(I)$, given by the Gagliardo semi-norm:
			\begin{equation*}
				\|v\|^2_{H^{1/2}(I)} = \|v\|^2_{L^2(I)} + \int_{I}\int_{I} \frac{|v(x)-v(y)|^2}{|x-y|^{2}}dx\,dy.
			\end{equation*}
			
			%standard definitions and properties of fractional Sobolev spaces. As we did for the case $s=1/2$ in \eqref{def6.2}, we define more generally for any $s \in \mathbb{R}$ the \textit{Sobolev space} $H^s(\mathbb{R})$ as the space of tempered distributions $u \in \mathcal{S}'(\mathbb{R})$ whose Fourier transform satisfies:
			%\begin{equation*}
			%\|u\|^2_{H^s(\mathbb{R})} := \int_{\mathbb{R}} (1+|\xi|^2)^s |\widehat{u}(\xi)|^2 d\xi < \infty.
			%\end{equation*}
			
			%For $s \in (0,1)$, an equivalent and local characterisation is given by the Gagliardo semi-norm. Namely, $u \in H^s(\mathbb{R})$ if and only if $u \in L^2(\mathbb{R})$ and the Gagliardo seminorm is finite:
			%\begin{equation*}
			%[u]^2_{H^s(\mathbb{R})} := \int_{\mathbb{R}}\int_{\mathbb{R}} \frac{|u(x)-u(y)|^2}{|x-y|^{1+2s}}dx\,dy < \infty.
			%\end{equation*}
			
			%The total norm $ \|u\|^2_{L^2(\mathbb{R})} + [u]^2_{H^s(\mathbb{R})}$ is equivalent to the Fourier-defined norm $\|u\|^2_{H^s(\mathbb{R})}$. The proof of this equivalence relies on the change of variables $t:=x-y$ and on applying essentially the same Fourier transform methods used to prove identity \eqref{identity}.
			
			%This formulation of the norm allows one to define Sobolev spaces on subdomains $\Omega\subset \R$, provided they are sufficiently regular. For a bounded open interval $I \subset \mathbb{R}$, one may define 
			%\begin{equation*}
			%\|v\|^2_{H^s(I)} = \|v\|^2_{L^2(I)} + \int_{I}\int_{I} \frac{|v(x)-v(y)|^2}{|x-y|^{1+2s}}dx\,dy,
			%\end{equation*}
			%and convey that  $v\in H^s(I)$ if there exists $u\in H^s(\mathbb{R})$ such that $\|v-u\|_{H^{s}(I)}=0$.

			Let us now study how the closure of smooth functions behaves in the $H^{1/2}(I)$ norm. Let us convey that $I=(0,1)$ for simplicity.
			
			\begin{defn*}
				We define the space $H^{1/2}_{00}(0,1)$ as:
				\begin{equation*}
					H^{1/2}_{00}(0,1) := \left\{ v \in H^{1/2}(0,1) : \int_0^1 \frac{|v(x)|^2}{x(1-x)}dx < \infty \right\}.
				\end{equation*}
			\end{defn*}
			
			The fundamental link between extension operators and $H^{1/2}_{00}$ is provided by the following classical theorem due to Lions and Magenes \cite[Theorem 11.7]{LM}.
			
			\begin{thm}
				\label{thm2}
				Let $v \in L^2(0,1)$, and let $\widetilde{v}$ denote its extension by zero to $\mathbb{R}$. Then $\widetilde{v} \in H^{1/2}(\mathbb{R})$ if and only if $v \in H^{1/2}_{00}(0,1)$. Moreover,
				\begin{equation*}
					\|\widetilde{v}\|_{H^{1/2}(\mathbb{R})} \simeq \|v\|_{H^{1/2}_{00}(0,1)}.
				\end{equation*}
			\end{thm}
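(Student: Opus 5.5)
The plan is to prove Theorem~\ref{thm2} by a direct computation of the Gagliardo seminorm of the zero extension $\widetilde v$ on all of $\R$. The first step is to identify the Fourier norm \eqref{def6.2} with the Gagliardo norm on the whole line:
\[
\|u\|_{H^{1/2}(\R)}^2\simeq \|u\|_{L^2(\R)}^2+\iint_{\R^2}\frac{|u(x)-u(y)|^2}{|x-y|^2}\,dx\,dy .
\]
This holds for every $u\in L^2(\R)$, with both sides allowed to be infinite. It follows exactly as in the derivation of \eqref{identity}. Writing $y=x-h$, we have
\[
\int_{\R}|u(x)-u(x-h)|^2\,dx=\int_{\R}|1-e^{-2\pi i h\xi}|^2|\widehat u(\xi)|^2\,d\xi
\]
by Plancherel. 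Integrating against $h^{-2}\,dh$ and using Tonelli gives $c\int_{\R}|\xi||\widehat u(\xi)|^2\,d\xi$. Finally, $(1+|\xi|^2)^{1/2}\simeq 1+|\xi|$.

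Next, apply this with $u=\widetilde v$ and split $\R^2$ according to whether $x$ and $y$ lie in $(0,1)$ or in its complement. The part where both variables lie outside $(0,1)$ vanishes, since $\widetilde v=0$ there. The part $(0,1)^2$ is exactly the local Gagliardo seminorm of $v$. The two mixed regions are equal by symmetry. On each of them the integrand reduces to $|v(x)|^2|x-y|^{-2}$ with $x\in(0,1)$ and $y\notin(0,1)$, so Tonelli lets us integrate in $y$ first:
\[
\int_{\R\setminus(0,1)}\frac{dy}{|x-y|^2}=\int_{-\infty}^0\frac{dy}{(x-y)^2}+\int_1^\infty\frac{dy}{(y-x)^2}=\frac1x+\frac1{1-x}=\frac{1}{x(1-x)}.
\]
All integrands are nonnegative, so this yields the exact identity
\[
\iint_{\R^2}\frac{|\widetilde v(x)-\widetilde v(y)|^2}{|x-y|^2}\,dx\,dy=\int_0^1\!\!\int_0^1\frac{|v(x)-v(y)|^2}{|x-y|^2}\,dx\,dy+2\int_0^1\frac{|v(x)|^2}{x(1-x)}\,dx ,
\]
valid in $[0,\infty]$. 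Adding $\|\widetilde v\|_{L^2(\R)}^2=\|v\|_{L^2(0,1)}^2$ to both sides, the right-hand side becomes, up to the factor $2$, the natural norm on $H^{1/2}_{00}(0,1)$, namely $\|v\|_{H^{1/2}(0,1)}^2+\int_0^1|v|^2/(x(1-x))$. The left-hand side is $\simeq\|\widetilde v\|_{H^{1/2}(\R)}^2$ by the first step. Both the equivalence of finiteness and the norm comparison then follow at once.

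The main obstacle is the first step, that is, making the Fourier/Gagliardo equivalence rigorous for a general $u\in L^2(\R)$ and not just for Schwartz functions. I would handle it with Plancherel in $x$ for each fixed $h$, which is legitimate for $L^2$ functions, followed by Tonelli in $(h,\xi)$, which is legitimate for nonnegative integrands. With these two tools no approximation argument is needed, and the identity holds in $[0,\infty]$. The constant is $c=\int_{\R}|1-e^{-2\pi i u}|^2u^{-2}\,du$, which is finite and positive.
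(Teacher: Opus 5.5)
Your proof is correct, but it takes a different route from the paper. The paper does not prove this statement at all; it quotes it from Lions--Magenes (Theorem 11.7 of their monograph).

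You instead give a self-contained one-dimensional computation in two steps.
First, Plancherel in $x$ for fixed $h$, followed by Tonelli, identifies $\iint_{\R^2}|u(x)-u(y)|^2|x-y|^{-2}\,dx\,dy$ with $4\pi^2\int_{\R}|\xi|\,|\widehat u(\xi)|^2\,d\xi$ in $[0,\infty]$ for every $u\in L^2(\R)$.
Second, the explicit formula $\int_{\R\setminus(0,1)}|x-y|^{-2}\,dy=\frac{1}{x(1-x)}$ turns the Gagliardo seminorm $[\widetilde v]$ into an exact identity. With the natural norm $\|v\|_{H^{1/2}_{00}}^2=\|v\|_{H^{1/2}(0,1)}^2+\int_0^1|v|^2/(x(1-x))\,dx$, this gives $\|v\|_{H^{1/2}_{00}}^2\le\|\widetilde v\|_{L^2}^2+[\widetilde v]^2\le 2\|v\|_{H^{1/2}_{00}}^2$.

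What your approach buys:
\begin{itemize}
\item It is transparent and gives explicit constants.
\item It pins down the norm on $H^{1/2}_{00}(0,1)$, which the paper leaves implicit.
\item It adapts essentially verbatim to $H^s$ with $0<s<1$, where the boundary weight becomes $\frac{1}{2s}\bigl(x^{-2s}+(1-x)^{-2s}\bigr)$. This fits the paper's remark about extending the examples to $H^s$.
\end{itemize}

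What the citation buys:
\begin{itemize}
\item Generality: smooth domains in $\R^n$ and higher orders $\mu+\frac12$.
\item More relevant for this paper, the same theorem of Lions--Magenes also supplies the interpolation identity $[H^1_0(0,1),L^2(0,1)]_{1/2}=H^{1/2}_{00}(0,1)$. That identity is used immediately afterwards to obtain \eqref{eq0.1}.
\end{itemize}

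So your computation fully proves the statement at hand. It replaces only the extension half of the paper's reliance on Lions--Magenes, not the interpolation half.
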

			
			We introduce basic terminology about fractional powers of the Dirichlet Laplacian. In our $L^2(0,1)$ setting, the Dirichlet Laplacian is simply $A=-\Delta_D=-\frac{d^2}{dx^2}$ and acts on the natural domain:
			\begin{equation*}
				\text{dom}(A)=H^2(0,1)\cap H^1_0(0,1),
			\end{equation*}
			
			where $H^1_0(0,1)$ denotes the closure of $\mathcal{C}^\infty_0(\R)$ functions that belong to $H^1(0,1)$. Any function $v \in L^2(0,1)$ can be written uniquely as $\displaystyle v(x)=\sum_{m=1}^{\infty} c_m \sin(\pi m x)$. Using the Spectral Theorem, we can define the fractional power $A^{\alpha}$ for any $\alpha >0$ as 
			\begin{equation*}
				A^{\alpha}v = \sum_{m=1}^{\infty} \lambda_m^{\alpha}c_m \sin{(\pi m x)},
			\end{equation*}
			where $\lambda_m = \pi^2 m^2$ are the eigenvalues of the normalized eigenfunctions of $A$, $e_m(x)=\sqrt{2}\sin(\pi m x)$ for $m\in\mathbb{N}$. The domain of $A^\alpha$ is
			\begin{equation*}
				\text{dom}(A^{\alpha})=\left\{ v \in L^2(0,1) : \sum_{m=1}^{\infty} \lambda_m^{2\alpha}|c_m|^2 < \infty \right\}.
			\end{equation*}
			
			For $\alpha=1/2$ it is not hard to prove that $\text{dom}(A^{1/2})=H^1_0(0,1)$, since for a function $v(x)=\sum_{m=1}^{\infty} c_m \sin(\pi m x)$, the sum $\sum_{m=1}^{\infty} m^2|c_m|^2$ should be finite. Moreover, for $\alpha = 1/4$ the condition is $\sum_{m=1}^{\infty} m|c_m|^2 <\infty $.
			
			The final step requires identifying the spectral domain $\text{dom}(A^{1/4})$ with the Lions-Magenes space $H^{1/2}_{00}(0,1)$. This is done by standard interpolation methods of domains of self-adjoint operators. In particular, we have, by definition
			\begin{equation*}
				\text{dom}(A^{1/4})=[\text{dom}(A^{1/2}),\text{dom}(A^{0})]_{1/2}=[H^1_0(0,1),L^2(0,1)]_{1/2}
			\end{equation*}
			The characterisation of the interpolated space $[H^1_0(0,1),L^2(0,1)]_{1/2}$ is also given in \cite[Theorem 11.7]{LM}, and is precisely
			\begin{equation*}
				[H^1_0(0,1),L^2(0,1)]_{1/2}=H^{1/2}_{00}(0,1),
			\end{equation*}
			with equivalent norms. Therefore, we deduce the identity:
			\begin{equation}
				\label{eq0.1}
				v \in H^{1/2}_{00}(0,1) \Longleftrightarrow v \in \text{dom}(A^{1/4}) \Longleftrightarrow \sum_{m=1}^{\infty} m|c_m|^2 < \infty.
			\end{equation}
			
			We are now ready to combine the previous results to prove Theorem \ref{thm8.1}.
			
			\begin{proof}
				Let $v(x)=\sum_{m=1}^{\infty} c_m \sin(\pi m x)$ be an element of $L^2(0,1)$ and let $u(x)=v(x)\chi_{[0,1]}(x)$ be its extension by zero to the real line $\mathbb{R}$.
				
				\noindent$(\Rightarrow)$ Suppose that $u \in H^{1/2}(\mathbb{R})$. By definition, $u$ is the zero-extension of $v$ from the interval $(0,1)$ to $\mathbb{R}$. Applying Theorem \ref{thm2}, we deduce that $v$ must belong to $H^{1/2}_{00}(0,1)$, so by the equivalences in \eqref{eq0.1} we are done.
				
				\noindent$(\Leftarrow)$ Conversely, assume that the coefficient sequence satisfies $\sum_{m=1}^{\infty} m|c_m|^2 < \infty$. By \eqref{eq0.1}, $v \in H^{1/2}_{00}(0,1)$. Then, by Theorem \ref{thm2}, since $v \in H^{1/2}_{00}(0,1)$, its zero-extension $\widetilde{v}=u$ automatically belongs to $H^{1/2}(\mathbb{R})$.
			\end{proof}

			\vspace{1.5cm}
			{\small
				\begin{tabular}{@{}l}
					\textsc{Joan\ Hernández,} \\ \textsc{Departament de Matem\`{a}tiques, Universitat Aut\`{o}noma de Barcelona,}\\
					\textsc{08193, Bellaterra (Barcelona), Catalonia.}\\
					{\it E-mail address}\,: \href{mailto:joan.hernandez@uab.cat}{\tt{joan.hernandez@uab.cat}}
				\end{tabular}
			}
			
			{\small
				\begin{tabular}{@{}l}
					\textsc{Joan\ Mateu,} \\ \textsc{Departament de Matem\`{a}tiques, Universitat Aut\`{o}noma de Barcelona,}\\
					\textsc{08193, Bellaterra (Barcelona), Catalonia.}\\
					{\it E-mail address}\,: \href{mailto:joan.mateu@uab.cat}{\tt{joan.mateu@uab.cat}}
				\end{tabular}
			}

			{\small
				\begin{tabular}{@{}l}
					\textsc{Laura\ Prat,} \\ \textsc{Departament de Matem\`{a}tiques, Universitat Aut\`{o}noma de Barcelona,}\\
					\textsc{08193, Bellaterra (Barcelona), Catalonia.}\\
					{\it E-mail address}\,: \href{mailto:laura.prat@uab.cat}{\tt{laura.prat@uab.cat}}
				\end{tabular}
			}  
			
		\end{document}